\documentclass[12pt]{amsart}
\usepackage{amsmath,amsthm,amscd,amsfonts,amssymb,epic,eepic,bbm, mathabx,graphicx,ytableau}
\allowdisplaybreaks

\setlength{\topmargin}{0truein} \setlength{\headheight}{.25truein}
\setlength{\headsep}{.25truein} \setlength{\textheight}{8.5truein}
\setlength{\footskip}{.25truein} \setlength{\oddsidemargin}{0truein}
\setlength{\evensidemargin}{0truein}
\setlength{\textwidth}{6.5truein} \setlength{\voffset}{-0.5truein}
\setlength{\hoffset}{0truein}

\vfuzz2pt 
\hfuzz2pt 
\newtheorem{thm}{Theorem}[section]
\newtheorem{cor}[thm]{Corollary}
\newtheorem{conj}[thm]{Conjecture}
\newtheorem{lem}[thm]{Lemma}
\newtheorem{defn}[thm]{Definition}

\newtheorem{rem}[thm]{Remark}
\newtheorem{example}[thm]{Example}
\theoremstyle{remark}

\newcommand{\la}{\lambda}

\begin{document}
\nocite{DW,Gasha,GS,Guay,Hai,HP,SWE,SW,S3,S1,S2,SS}

\title[$e$-positivity and $e$-unimodality of chromatic quasisymmetric functions]{On $e$-positivity and $e$-unimodality of chromatic quasisymmetric functions}

\author{Soojin Cho}
\address{Department of Mathematics, Ajou University, Suwon 16499 Republic of Korea}
\email{chosj@ajou.ac.kr}

\author{JiSun Huh}
\address{Department of Mathematics, Ajou University, Suwon 16499 Republic of Korea}
\email{hyunyjia@ajou.ac.kr}

\keywords{chromatic quasisymmetric function, $(3+1)$-free poset, $e$-positivity, $e$-unimodality, natural unit interval order}
\subjclass[2010]{Primary 05E05; Secondary 05C15, 05C25 }

\thanks{This work is supported by the Basic Science Research Program through the National Research Foundation of Korea (NRF) funded by the Ministry of Education (NRF-2015R1D1A1A01057476).}

\begin{abstract} The $e$-positivity conjecture and the $e$-unimodality conjecture of chromatic quasisymmetric functions are proved for some classes of natural unit interval orders. Recently, J. Shareshian and M. Wachs introduced chromatic \emph{quasisymmetric} functions as a refinement of Stanley's chromatic symmetric functions and conjectured the $e$-positivity and the $e$-unimodality of these functions. The $e$-positivity of chromatic quasisymmetric functions implies the $e$-positivity of corresponding chromatic symmetric functions, and our work resolves Stanley's conjecture on chromatic symmetric functions of $(3+1)$-free posets for two classes of natural unit interval orders.
\end{abstract}

\maketitle

\section{Introduction}

In 1995, R. Stanley \cite{S1} introduced the {\it chromatic symmetric function} $X_G(\bold{x})$ associated with any simple graph $G$, which generalizes the chromatic polynomial $\chi_G(n)$ of $G$. There have been great deal of researches on these functions in various directions and purposes, and
one of the long standing and well known conjecture due to Stanley on chromatic symmetric functions states that a chromatic symmetric function of any $(3+1)$-free poset is a linear sum of elementary symmetric function basis $\{e_\la\}$ with \emph{nonnegative} coefficients. 
Recently, Shareshian and Wachs \cite{SW} introduced a chromatic \emph{quasi}symmetric refinement $X_G({\bf x}, t)$ of  chromatic symmetric function $X_G({\bf x})$ for a graph $G$. They conjectured the $e$-positivity and the $e$-unimodality of the chromatic quasisymmetric functions of natural unit interval orders: That is, if $X_G({\bf x}, t)=\sum_{j=0}^m a_j({\bf x}) t^j$ for  the incomparability graph $G$ of a natural unit interval order, then $a_j({\bf x})$, $0\leq j\leq m$, is a nonnegative linear sum of $e_\la$'s, which is a refinement of Stanley's $e$-positivity conjecture. Moreover, $e$-unimodality conjecture states that $a_{j+1}({\bf x})-a_j({\bf x})$ is a sum of $e_\la$'s with nonnegative coefficients for $0\leq j<\frac{m-1}{2}$.

Our work in this paper is to give combinatorial proofs of the conjectures on $e$-positivity and $e$-unimodality of chromatic quasisymmetric functions for certain (distinct) families of natural unit interval orders. We use the Schur function expansion of chromatic quasisymmetric functions in terms of Gasharov's $P$-tableaux (\cite{Gasha}), that was done by Shareshian and Wachs in \cite{SW}. For the $e$-positivity, we use Jacobi-Trudi expansion of Schur functions into elementary symmetric functions to write chromatic quasisymmetric functions as a sum of elementary symmetric functions with coefficients of signed sum of positive $t$ polynomials. We then define a sign reversing involution to cancel out negative terms and obtain only positive terms. We use the combinatorial model for the coefficients of $e$-expansion of $X_G({\bf x}, t)$ we obtained and an inductive argument on $P$-tableaux to find explicit $e$-expansion formulae of $X_G({\bf x}, t)$ and this shows the $e$-unimodality of $X_G({\bf x}, t)$ for some classes of natural unit interval orders.

Our proof for $e$-positivity of $X_G({\bf x}, t)$ proves Stanley's $e$-positivity conjecture as we mentioned. We have to remark that one class of natural unit interval orders we consider (in Section~\ref{sec:firstmain})
was considered by Stanley and Stembridge and the $e$-positivity of the corresponding chromatic symmetric functions was proved (Remark 4.4 in \cite{SS}). Our proof method, however, is different from Stanley-Stembridge's and gives a new proof. We recently notice that there is another (new) $e$-positivity proof of the chromatic symmetric functions for the same class by Harada and  Precup \cite{HP}.

Two families we consider for the $e$-positivity of the chromatic quasisymmetric functions include $(2^{n-1}-n)$ and $\frac{(n-3)(n-4)}{2}$ natural unit interval orders, respectively, out of $\frac{1}{n}{{2(n-1)}\choose {n-1}}$ natural unit interval orders that are corresponding to \emph{connected} graphs on $n$ vertices. We show the $e$-unimodality of the chromatic quasisymmetric functions for $(n-2)^2$ orders out of $\frac{1}{n}{{2(n-1)}\choose {n-1}}$ natural unit interval orders.
  
The present paper is organized as follows. After we introduce some important notions and known background works in Section \ref{sec:pre}, we provide two classes of natural unit interval orders satisfying the refined $e$-positivity conjecture in Section \ref{sec:main}. In Section \ref{sec:last} we consider some special natural unit interval orders among the ones that were considered in Section \ref{sec:main}, and obtain the explicit $e$-basis expansion of the corresponding chromatic quasisymmetric functions, which shows the $e$-unimodality of them.

\vspace{3mm}
\section{Preliminaries}\label{sec:pre}

In this section, we define necessary notions, setup notations and review known related results that will be used to develop our arguments.

\vspace{2mm}
\subsection{Chromatic quasisymmetric functions and the positivity conjecture.}
We let $\mathbb{P}$ be the set of positive integers and use $[n]$ to denote the set $\{1, 2, \dots, n\}$ for $n\in \mathbb P$. For a positive integer $n$, a \emph{partition} of $n$ is a sequence $\la=(\la_1, \la_2, \dots, \la_\ell)$ of positive integers such that $\la_i\geq \la_{i+1}$ for all  $i$ and $\sum_i \la_i =n$ and we use the notation $\la\vdash n$ to denote that $\la$ is a partition of $n$. For a partition $\la$, $\la'=(\la'_1, \dots, \la'_{\la_1})$ is the \emph{conjugate} of $\la$ defined as $\la'_j=|\{ i\,|\,\la_i\geq j\}|$. We let $\bold{x}=(x_1,x_2,\dots)$ be a  sequence of commuting variables. For $n\in \mathbb{P}$, the \emph{$n^{\mbox{\tiny{th}}}$ elementary symmetric function} $e_n$ is defined as 
$e_n=\sum_{i_1<\cdots<i_n} x_{i_1}\cdots x_{i_n}\,,$ and the \emph{$\mathbb{Q}$-algebra $\Lambda_{\mathbb{Q}}$ of symmetric functions} is the subalgebra of $\mathbb{Q}[[x_1, x_2, \dots]]$ generated by the $e_n$'s;
$\Lambda_{\mathbb{Q}}=\Lambda=\mathbb{Q}[e_1, e_2, \dots]\,.$ Then $\Lambda=\bigoplus_{n=0}^\infty \Lambda^n$ where $\Lambda^n$ is the subspace of symmetric functions of degree $n$, and $\{e_\la \,|\, \la\vdash n\}$ is a basis of $\Lambda^n$ where $e_\la=e_{\la_1}\cdots e_{\la_\ell}$ for $\la=(\la_1, \la_2, \dots, \la_\ell)$. The set of \emph{Schur functions} $\{s_\la\,|\,\la\vdash n\}$ forms another well known basis of $\Lambda^n$, where $s_\la=\mbox{det}[e_{\la'_i-i+j}]$ is a determinant of a $\la_1\times \la_1$ matrix, that is called \emph{Jacobi-Trudi identity}. 

We now give the definition of \emph{chromatic quasisymmetric functions} that is the main object of the current work. Recall that a {\it proper coloring} of a simple graph $G=(V,E)$ with vertex set $V$ and edge set $E$ is any function $\kappa : V \rightarrow \mathbb{P}$
satisfying $\kappa(u)\neq \kappa(v)$ for any $u, v\in V$ such that $\{u,v\}\in E$. 

\begin{defn}[Shareshian-Wachs \cite{SW}] 
For a simple graph $G=(V,E)$ which has a vertex set $V\subset\mathbb{P}$, the \emph{chromatic quasisymmetric funcion} of $G$ is
$$X_G(\bold{x},t)=\sum_{\kappa}t^{\rm{asc}(\kappa)}\bold{x}_{\kappa},$$
where the sum is over all proper colorings $\kappa$ and
$${\rm asc}(\kappa)=\{\{i,j\}\in E~|~i<j~\text{and}~\kappa(i)<\kappa(j)\}.$$
\end{defn}

Chromatic quasisymmetric function $X_G(\bold{x},t)$ is defined as a refinement of Stanley's \emph{chromatic symmetric function} $X_G(\bold{x})$ introduced in \cite{S1};
$$X_G(\bold{x},t)|_{t=1}=X_G(\bold{x})\,.$$

Before we state a conjecture on (quasi)chromatic symmetric functions, we give some necessary definitions first. The {\it incomparability graph} ${\rm inc}(P)$ of a poset $P$ is a graph which has as vertices the elements of $P$, with edges connecting pairs of incomparable elements.  
The posets we are interested in are \emph{natural unit interval orders}. There are many equivalent descriptions of natural unit interval orders (see Section 4 of \cite{SW}) and we use the following definition for our work.

\begin{defn}[\cite{SW}]
Let $\bold{m}:=(m_1, m_2, \dots, m_{n-1})$ be a list of positive integers satisfying $m_1 \leq m_2 \leq \cdots \leq m_{n-1} \leq n$ and $m_i \geq i$ for all $i$. The corresponding \emph{natural unit interval order} $P(\bold{m})$ is the poset on $[n]$ with the order relation given by $i<_{P(\bold{m})} j$ if $i<n$ and $j\in \{m_i +1, m_i +2, \dots, n\}$.
\end{defn}

It is easy to see that the incomparability graph  ${\rm inc}(P)$ of $P=P(m_1,\dots,m_{n-1})$ is the graph with vertex set $[n]$ and $\{i, j\}$, $i<j$, is an edge of ${\rm inc}(P)$ if and only if $j\leq m_i$, and if $i<_P j$ then $i<j$. Note that Catalan number $C_n=\frac{1}{n+1}\binom{2n}{n}$ counts the natural unit interval orders with $n$ elements. 

\begin{example} \label{ex:stair}
For each $r\in [n]$, define $P_{n,r}$ to be the poset on $[n]$ with order relation given by $i<_{P_{n,r}}j$ if $j-i\geq r$. Then the poset $P_{n,r}$ is the natural unit interval order $P(r,r+1,r+2,\dots,n,\dots,n)$ whose incomparability graph $G_{n,r}$ is the graph with vertex set $[n]$ and edge set $\{\{i,j\}~|~0<j-i<r\}$.
\end{example}

Shareshian and Wachs showed that if $G$ is the incomparability graph of a natural unit interval order then $X_G(\bold{x},t)$ is a polynomial with very nice properties:

\begin{thm}[Theorem 4.5 and Corollary 4.6 in \cite{SW}]\label{thm:SW}
If $G=(V, E)$ is the incomparability graph of a natural unit interval order then the coefficients of $t^i$ in $X_G(\bold{x},t)$ are symmetric functions and form a palindromic sequence in the sense that $X_G(\bold{x},t)=t^{|E|}X_G(\bold{x},t^{-1})$.
\end{thm}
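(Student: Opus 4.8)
The plan is to prove the two assertions in turn, deriving the palindromicity $X_G(\mathbf{x},t)=t^{|E|}X_G(\mathbf{x},t^{-1})$ from the symmetry of the $t$-coefficients together with a reversal of colors. For the symmetry I would first reduce to a local statement. Since $X_G(\mathbf{x},t)$ is homogeneous of degree $|V|$ in $\mathbf{x}$, it is a symmetric function as soon as it is invariant under every adjacent transposition $s_c\colon x_c\leftrightarrow x_{c+1}$ (the $s_c$ generate all finite permutations of the variables), so it suffices to produce, for each $c\ge 1$, an involution $\psi_c$ on the proper colorings of $G$ that preserves $\mathrm{asc}$ and interchanges the numbers of vertices colored $c$ and $c+1$ while fixing every other color multiplicity. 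The observation that localizes the construction is that, writing $W:=\kappa^{-1}(c)\cup\kappa^{-1}(c+1)$, re-coloring the vertices of $W$ using only the colors $c$ and $c+1$ changes neither the ascent status of any edge joining $W$ to a vertex of color $d\notin\{c,c+1\}$ (because $d<c$ or $d>c+1$) nor that of any edge disjoint from $W$; thus only the induced subgraph $H:=G[W]$ matters.

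Next I would analyze $H$. It is triangle-free (being properly $2$-colored), and it is claw-free and chordal (being an induced subgraph of the indifference graph $G=\mathrm{inc}(P(\mathbf{m}))$); triangle-freeness together with claw-freeness forces maximum degree at most $2$, and triangle-freeness together with chordality forces acyclicity, so $H$ is a disjoint union of paths. The crucial point, where the hypothesis on $\mathbf{m}$ really enters, is that each path component is \emph{monotone in the vertex labels}: using that the closed neighborhoods $N_G[i]=[\ell_i,r_i]$ are intervals with $\ell_i,r_i$ weakly increasing in $i$, a ``zigzag'' $v_{i-1}<v_i>v_{i+1}$ or $v_{i-1}>v_i<v_{i+1}$ inside a component would force $v_{i-1}\sim v_{i+1}$, which is impossible since those two vertices sit at distance two on a path. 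Hence each component may be listed as $v_1<v_2<\cdots<v_m$ with edges $\{v_i,v_{i+1}\}$ for $1\le i<m$, and a direct count of ascents then shows that the two proper $2$-colorings of such a path yield the same ascent count when $m$ is odd and the same number of vertices colored $c$ when $m$ is even. I would accordingly define $\psi_c$ to flip the $2$-coloring (interchange $c$ and $c+1$) on every odd-length path component of $H$, leaving all even-length components and all vertices outside $W$ unchanged. Flipping every odd component sends the color-$c$ count on $W$ to $|W|$ minus its former value, i.e.\ to the former color-$(c+1)$ count, so $\psi_c$ acts correctly on the color multiplicities; it preserves $\mathrm{asc}$ because odd-path flips preserve internal ascent counts, even components and vertices outside $W$ are untouched, and the edges leaving $W$ are unaffected; and it is plainly an involution. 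This establishes that every coefficient of $t^j$ in $X_G(\mathbf{x},t)$ is symmetric.

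Granting the symmetry, palindromicity follows by reversing colors. For a proper coloring $\kappa$ with values below $N$, the coloring $\bar\kappa:=N+1-\kappa$ is again proper; its color content is the reversal of that of $\kappa$, hence a rearrangement of it, and every edge exchanges its ascent for a descent, so $\mathrm{asc}(\bar\kappa)=|E|-\mathrm{asc}(\kappa)$. Applied to a monomial $\mathbf{x}^\lambda$ with $\lambda$ a partition, this bijection gives $[\mathbf{x}^\lambda t^j]X_G=[\mathbf{x}^{\bar\lambda}t^{|E|-j}]X_G$; since the coefficient of $t^{|E|-j}$ is symmetric by the previous paragraph and $\bar\lambda$ is a rearrangement of $\lambda$, the right-hand side equals $[\mathbf{x}^\lambda t^{|E|-j}]X_G$. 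As two symmetric functions agreeing on all $\mathbf{x}^\lambda$ coincide, the coefficient of $t^j$ equals the coefficient of $t^{|E|-j}$ for every $j$, which is the claimed palindromicity.

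The hard part will be the middle step: proving that $H=G[W]$ splits into paths that are monotone in the vertex labels, and then carrying out the parity bookkeeping comparing the two $2$-colorings of a monotone path, so that flipping exactly the odd-length components is at once content-interchanging and ascent-preserving. This is precisely where the natural-unit-interval-order hypothesis is indispensable (the corresponding symmetry genuinely fails for general graphs $G$), and once this combinatorial skeleton is in place the reduction to adjacent transpositions and the color-reversal argument for palindromicity are both routine.
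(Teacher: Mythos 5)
The paper states this result without proof, citing Theorem 4.5 and Corollary 4.6 of Shareshian--Wachs, so there is no in-paper argument to compare against; your proof is correct and is essentially the original one from \cite{SW}. The genuinely delicate points --- that the induced subgraph on the union of two consecutive color classes of $\mathrm{inc}(P(\bold{m}))$ is a disjoint union of label-monotone paths (via claw-freeness, chordality, and the monotone interval neighborhoods $N_G[i]$), the parity bookkeeping showing that flipping the two proper $2$-colorings of such a path preserves the ascent count exactly when the path has an odd number of vertices and preserves the color content exactly when it has an even number, and the need to route palindromicity through the already-established symmetry (since the color-reversal bijection alone only matches a composition with its reverse) --- are all identified and handled correctly.
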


Shareshian and Wachs also made a conjecture on the $e$-positivity and the $e$-unimodality of $X_G(\bold{x},t)$. Remember that one says a symmetric function $f({\bf{x}})\in\Lambda^n $ is $b$-{\it positive} if the expansion of $f(\bf{x})$ in the basis $\{b_{\la}\}$ has nonnegative coefficients when $\{b_\la\,|\,\la\vdash n\}$ is a basis of $\Lambda^n$.

\begin{conj}[Shareshian-Wachs \cite{SW}]\label{conj:SW}
If $G$ is the incomparability graph of a natural unit interval order, then $X_G(\bold{x},t)$ is $e$-positive and $e$-unimodal. That is, if $X_G(\bold{x},t)=\sum_{i=0}^{m}a_i(\bold{x})t^i$ then $a_{i}(\bold{x})$ is $e$-positive for all $i$, and $a_{i+1}(\bold{x})-a_{i}(\bold{x})$ is $e$-positive whenever $0\leq i < \frac{m-1}{2}$.
\end{conj}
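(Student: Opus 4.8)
The plan is to prove the full conjecture by upgrading the three-step program sketched in the introduction — Schur expansion, Jacobi--Trudi, sign-reversing involution — from special families to arbitrary $\mathbf{m}$. Write $P=P(\mathbf{m})$ and $G=\mathrm{inc}(P)$, and begin from the Shareshian--Wachs refinement of Gasharov's theorem, which expresses $X_G(\mathbf{x},t)=\sum_{\la\vdash n}\big(\sum_{T}t^{\mathrm{inv}_P(T)}\big)s_\la$, where $T$ ranges over $P$-tableaux of shape $\la$ and $\mathrm{inv}_P$ is the associated $P$-inversion statistic. Since the $s_\la$-coefficients already lie in $\mathbb{N}[t]$, the whole problem is to pass from the Schur basis to the $e$-basis while retaining nonnegativity. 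To do so I would substitute the Jacobi--Trudi identity $s_\la=\det[e_{\la'_i-i+j}]$ and expand every determinant: each term is indexed by a permutation $\sigma$ and contributes $\mathrm{sgn}(\sigma)\,e_{\mu(\la,\sigma)}$, where $\mu(\la,\sigma)$ is obtained by sorting the exponents $\la'_i-i+\sigma(i)$. Collecting terms yields a finite signed model on triples $(\la,T,\sigma)$ carrying sign $\mathrm{sgn}(\sigma)$, weight $t^{\mathrm{inv}_P(T)}$, and $e$-shape $\mu(\la,\sigma)$, so that $X_G(\mathbf{x},t)=\sum_{(\la,T,\sigma)}\mathrm{sgn}(\sigma)\,t^{\mathrm{inv}_P(T)}\,e_{\mu(\la,\sigma)}$.

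The heart of the argument is to construct a single involution $\Phi$ on this set of triples, valid for every natural unit interval order, that simultaneously (i) preserves the $e$-shape $\mu$, (ii) preserves the $t$-weight $\mathrm{inv}_P(T)$, and (iii) reverses $\mathrm{sgn}(\sigma)$ on every non-fixed triple. Such a $\Phi$ cancels all negatively signed contributions, and the surviving fixed points — which I expect to be the triples with $\sigma=\mathrm{id}$ together with a distinguished class of $P$-compatible tableaux — would then exhibit $X_G(\mathbf{x},t)=\sum_\mu c_\mu(t)\,e_\mu$ with each $c_\mu(t)\in\mathbb{N}[t]$. Reading off the coefficient of $t^i$ gives $e$-positivity of each $a_i(\mathbf{x})$ in the refined sense.

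For $e$-unimodality I would promote the palindromicity supplied by Theorem~\ref{thm:SW} to the statement that each surviving $c_\mu(t)$ is itself palindromic and unimodal about the common center $|E|/2$. Palindromicity of each $c_\mu$ should follow by transporting the $t\mapsto t^{-1}$ symmetry through $\Phi$, requiring $\Phi$ to intertwine the Shareshian--Wachs reversal; unimodality of each $c_\mu$ I would obtain by induction on $\mathbf{m}$, peeling off the last part $m_{n-1}$ and tracking how a $P$-tableau on $[n]$ restricts to one on $[n-1]$, so that each $c_\mu^{(n)}(t)$ is assembled from the $c_\bullet^{(n-1)}(t)$ by sums and products of manifestly palindromic-unimodal $\mathbb{N}[t]$-factors (using that products of palindromic unimodal polynomials are again palindromic unimodal). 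Given a common center, $a_{i+1}(\mathbf{x})-a_i(\mathbf{x})=\sum_\mu\big([t^{i+1}]c_\mu-[t^i]c_\mu\big)e_\mu$ has nonnegative coefficients for $i<\tfrac{m-1}{2}$, yielding $e$-unimodality.

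I expect the decisive obstacle to be the second step: engineering one involution $\Phi$ that works uniformly across all $\mathbf{m}$. The determinant expansion and the $P$-tableau combinatorics interact through the order relations of $P$ in a way that makes any naive pairing destroy either the $t$-weight or the $e$-shape, and the fixed-point set must be pinned down precisely enough to read off the $c_\mu$ and to verify the common center needed for unimodality. This is exactly the point at which the extra structure available for the special families disappears, so a fully general proof would demand either a substantially more flexible matching or a new invariant controlling the cancellation uniformly.
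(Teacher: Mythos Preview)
The statement you are attempting to prove is a \emph{conjecture} in the paper, not a theorem: the authors do not claim, and do not give, a proof of Conjecture~\ref{conj:SW} in full generality. What the paper actually does is verify the conjecture for two restricted families of natural unit interval orders (those with bounce number~$2$ of the special forms in Sections~\ref{sec:firstmain} and~\ref{sec:secondmain}), and then derive explicit $e$-expansions and $e$-unimodality for still smaller subfamilies in Section~\ref{sec:last}. So there is no ``paper's own proof'' of this statement to compare against.

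That said, your proposal is not a proof either, and you correctly identify why. The three-step program you describe --- Schur expansion via $P$-tableaux, Jacobi--Trudi, then a sign-reversing involution --- is exactly the mechanism the paper employs, but the paper succeeds only because in the bounce-number-$2$ case the Jacobi--Trudi determinants are $2\times 2$, so there is a single transposition to cancel and the required injection $\psi_\ell:\mathcal{T}_{P,2^{\ell+1}1^{n-2\ell-2}}\to\mathcal{T}_{P,2^{\ell}1^{n-2\ell}}$ can be written down explicitly. Your step~(ii), constructing a weight- and shape-preserving, sign-reversing involution $\Phi$ valid for \emph{every} $\mathbf{m}$, is precisely the open problem; you have only listed the properties $\Phi$ must satisfy, not produced it. The same applies to the unimodality step: the paper's inductive arguments in Section~\ref{sec:last} rely on the very specific structure of the families considered (essentially that deleting two elements yields a poset of the same type, so Corollary~\ref{cor:oneline} and Lemma~\ref{lem:reflection} apply), and your ``peel off $m_{n-1}$'' sketch does not explain how to control the $c_\mu^{(n)}$ in terms of $c_\bullet^{(n-1)}$ for general $\mathbf{m}$.

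In short: your outline matches the paper's method in spirit, but both you and the paper stop at the same barrier. A general proof along these lines would resolve a well-known open conjecture.
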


A finite poset $P$ is called $(r+s)$-{\it free} if $P$ does not contain an induced subposet that is isomorphic to the direct sum of an $r$ element chain and an $s$ element chain. Due to the result by Guay-Paquet \cite{Guay}, Conjecture~\ref{conj:SW} specializes to the famous $e$-positivity conjecture on the chromatic symmetric functions of Stanley and Stembridge:

\begin{conj}[Stanley-Stembridge \cite{S1,SS}]\label{conj:SS} If a poset $P$ is $(3+1)$-free, then $X_{{\rm inc}(P)}(\bold{x})$ is $e$-positive. 
\end{conj}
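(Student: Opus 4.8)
The plan is to derive Conjecture~\ref{conj:SS} as a consequence of the stronger refined statement in Conjecture~\ref{conj:SW}, via the two reductions already implicit in the discussion above. First I would invoke the modular-relation theorem of Guay-Paquet \cite{Guay}: the chromatic symmetric function of an arbitrary $(3+1)$-free poset is a convex combination of chromatic symmetric functions of posets that are simultaneously $(3+1)$-free and $(2+2)$-free. The latter are the unit interval orders, each of which has the same incomparability graph as a natural unit interval order $P(\bold{m})$, so $X_{{\rm inc}(P)}(\bold{x})$ is unchanged under this identification. Since a nonnegative linear combination of $e$-positive symmetric functions is again $e$-positive, it therefore suffices to prove that $X_{{\rm inc}(P(\bold{m}))}(\bold{x})$ is $e$-positive for every admissible list $\bold{m}$. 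Second, writing $G = {\rm inc}(P(\bold{m}))$ and $X_G(\bold{x},t) = \sum_i a_i(\bold{x}) t^i$, I would prove the finer fact that each coefficient $a_i(\bold{x})$ is $e$-positive; specializing at $t=1$ through $X_G(\bold{x}) = X_G(\bold{x},t)|_{t=1}$ then yields the desired $e$-positivity of $X_G(\bold{x})$.

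To establish the $e$-positivity of the $a_i(\bold{x})$, I would start from the Schur expansion of $X_G(\bold{x},t)$ obtained by Shareshian and Wachs via Gasharov's $P$-tableaux \cite{Gasha,SW}, in which the coefficient of each $s_\la$ is a manifestly nonnegative polynomial in $t$ recording the ascent statistic over $P$-tableaux of shape $\la$. I would then substitute the Jacobi-Trudi identity $s_\la = \det[e_{\la'_i - i + j}]$ to rewrite $X_G(\bold{x},t)$ as a \emph{signed} combination of the $e_\mu$, whose coefficients are signed sums of nonnegative $t$-polynomials indexed by pairs consisting of a $P$-tableau and a permutation arising from the determinant expansion. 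The aim is to match each negative term (coming from an off-diagonal permutation) with a positive term of equal $t$-weight and opposite sign, so that after cancellation only nonnegative $e_\mu$-coefficients survive at every power of $t$.

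The heart of the argument, and the step I expect to be the main obstacle, is the construction of this matching as a genuine sign-reversing, $t$-weight-preserving involution on the set of (tableau, permutation) pairs, valid \emph{uniformly} for all natural unit interval orders. Locally, a ``bad crossing'' in such a pair suggests a swap that toggles the sign while preserving the $t$-statistic, but whether the swapped object is again a legitimate $P$-tableau depends delicately on the incomparability constraints encoded by $\bold{m}$; a rule that is simultaneously well defined, an involution, and sign-reversing for \emph{all} shapes $\bold{m}$ is exactly what must be produced. It is precisely the passage from a bounded family of patterns to the global structure of an arbitrary $\bold{m}$ that makes this a proof of a long-open conjecture rather than a routine specialization, and this is where the decisive combinatorial work is concentrated.
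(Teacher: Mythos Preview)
This statement is a \emph{conjecture} in the paper, not a theorem; the paper does not prove it, and at the time of writing it was (and essentially remains) open in full generality. The paper's actual contributions are Theorems~\ref{thm:main} and~\ref{thm:second}, which establish the $e$-positivity of $X_{{\rm inc}(P)}(\bold{x},t)$ only for two specific families of natural unit interval orders (those with bounce number two, and a particular bounce-number-three family). So there is no ``paper's own proof'' of Conjecture~\ref{conj:SS} to compare against.

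Your proposal correctly reproduces the two well-known reductions (Guay-Paquet's convex-combination result, and the $t=1$ specialization from Conjecture~\ref{conj:SW}), but these merely show that Conjecture~\ref{conj:SW} implies Conjecture~\ref{conj:SS}; they do not prove either. The substantive content you describe---a sign-reversing, ${\rm inv}_G$-preserving involution on (tableau, permutation) pairs valid for \emph{all} natural unit interval orders---is not supplied; you have accurately located the hard step and then simply asserted that it is ``what must be produced.'' That is an outline of a strategy, not a proof. Indeed, the reason the paper succeeds only for its restricted families is that when the bounce number is two, the only Schur functions appearing are $s_{2^\ell 1^{n-2\ell}}$, whose Jacobi-Trudi determinant is $2\times 2$, so a single explicit injection $\psi_\ell$ between $P$-tableaux of adjacent shapes suffices. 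For general $\bold{m}$ the determinants are $\la_1\times\la_1$ with $\la_1$ unbounded, and no uniform involution of the kind you describe is known; constructing one would resolve the full Stanley--Stembridge conjecture.
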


Since $e_\la$ is $s$-positive for all $\la$, Conjecture~\ref{conj:SW} implies the $s$-positivity of $X_G(\bold{x},t)$ and they were shown to be true by Gasharov when $t=1$ and by Shareshian-Wachs for general case using $P$-tableaux.

\begin{defn}[Gasharov \cite{Gasha}]
Given a poset $P$ with $n$ elements and a partition $\la$ of $n$, a \emph{$P$-tableau of shape} $\la$ is a filling $T=[a_{i,j}]$ of a Young diagram of shape $\la$ in English notation \\

\begin{center}
$T$=\begin{ytableau}      
a_{1,1}&a_{1,2}&\cdots\cr
a_{2,1}&a_{2,2}&\cdots\cr
a_{3,1}&\vdots \cr
\vdots\cr
\end{ytableau}\\
\end{center}
with all elements of $P$ satisfying the following properties:
\begin{enumerate}
\item Each element of $P$ appears exactly once.
\item For all $i$ and $j$, $a_{i,j}<_Pa_{i,j+1}$.
\item For all $i$ and $j$, $a_{i+1,j}\not <_P a_{i,j}$.
\end{enumerate}
\end{defn}

\begin{defn}[Shareshian-Wachs \cite{SW}] For a finite poset $P$ on a subset of $\mathbb P$ and a $P$-tableau $T$, let $G={\rm inc}(P)$ be the incomparability graph of $P$. Then an edge $\{i,j\}\in E(G)$ is a \emph{$G$-inversion of $T$} if $i<j$ and $i$ appears below $j$ in $T$. We let $\emph{inv}_G(T)$ be the number of $G$-inversions of $T$.
\end{defn}

\begin{example}
Let $P=P(3,5,5,6,7,8,8)$ be a natural unit interval order and $G$ be the incomparability graph of $P$. Then 
\begin{center}
$T$= \begin{ytableau}      
1&4&7\cr
3&6\cr
2&8\cr
5\cr
\end{ytableau}\\
\end{center}
is a $P$-tableau of shape $(3,2,2,1)$ and
$${\rm inv}_G(T)=|\{\{2,3\},\{2,4\},\{3,4\},\{5,6\},\{5,7\},\{6,7\}\}|=6.$$
Note that $\{2,3\}\in E(G)$ since $2\not <_P 3$, and $2<3$.
\end{example}

The following theorem is the $s$-positivity result of Shareshian-Wachs for the chromatic quasisymmetric functions of natural unit interval orders, which specializes to the Gasharov's $s$-positivity result for chromatic symmetric functions of $(3+1)$-free posets.

\begin{thm}[\cite{SW},\cite{Gasha}]\label{thm:Gasha} 
Let $P$ be a natural unit interval order and let $G$ be the incomparability graph of $P$. Then
$$X_G(\bold{x},t)=\sum_{T} t^{{\rm inv}_G(T)}s_{\la(T)},$$
where the sum is over all $P$-tableaux and $\la(T)$ is the shape of $T$, and therefore, $X_G(\bold{x},t)$ is $s$-positive.
\end{thm}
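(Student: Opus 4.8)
The plan is to first rewrite $X_G(\bold{x},t)$ in Gessel's basis of fundamental quasisymmetric functions and then to collapse that expansion into the asserted Schur sum. For $S\subseteq[n-1]$ write $F_{n,S}=\sum x_{i_1}\cdots x_{i_n}$, the sum over $1\le i_1\le\cdots\le i_n$ with $i_j<i_{j+1}$ whenever $j\in S$. Starting from $X_G(\bold{x},t)=\sum_\kappa t^{\mathrm{asc}(\kappa)}\bold{x}_\kappa$, I would standardize each proper coloring $\kappa$ of $G$ on the vertex set $[n]$: because the color classes of a proper coloring of $\mathrm{inc}(P)$ are chains of $P$, breaking ties among equicolored vertices by the natural order of $[n]$ produces a well-defined permutation $w=w_\kappa$ of $[n]$ (written as a word) refining $\kappa$. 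Since $\kappa(i)\neq\kappa(j)$ for every edge $\{i,j\}\in E$, the quantity $\mathrm{asc}(\kappa)$ depends only on $w$ and equals the number of edges $\{i,j\}$ with $i<j$ and $i$ preceding $j$ in $w$; denote this by $\mathrm{inv}_G(w)$ (it matches the tableau statistic $\mathrm{inv}_G$ through the reading word). Summing $\bold{x}_\kappa$ over all colorings with a fixed standardization $w$ collapses to a single $F_{n,D(w)}$, where $D(w)\subseteq[n-1]$ is the set of positions $i$ with $w(i)>w(i+1)$ or $\{w(i),w(i+1)\}\in E$ (either condition forces a strict color increase). As every permutation $w$ is the standardization of the rainbow coloring $\kappa(w(k))=k$, this yields
$$X_G(\bold{x},t)=\sum_{w} t^{\mathrm{inv}_G(w)}\,F_{n,D(w)},$$
the sum over all permutations $w$ of $[n]$. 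This step is routine standardization bookkeeping; the only delicate point is the exact definition of $D(w)$ so that the statistics align, and the chain property of color classes is what legitimizes the tie-breaking.

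\textbf{Step 2: passing from $F$ to $s$.} Recall Gessel's identity $s_\la=\sum_Q F_{n,\mathrm{Des}(Q)}$, summed over standard Young tableaux $Q$ of shape $\la$. Hence it suffices to construct a bijection $w\longmapsto(T_w,Q_w)$ from the permutations of $[n]$ onto $\bigsqcup_\la\{(T,Q): T\text{ a }P\text{-tableau of shape }\la,\;Q\in\mathrm{SYT}(\la)\}$ with $\mathrm{Des}(Q_w)=D(w)$ and $\mathrm{inv}_G(T_w)=\mathrm{inv}_G(w)$. Granting it, fixing $T_w=T$ and summing over the corresponding $w$'s (equivalently, over $Q\in\mathrm{SYT}(\la(T))$) gives $t^{\mathrm{inv}_G(T)}\sum_Q F_{n,\mathrm{Des}(Q)}=t^{\mathrm{inv}_G(T)}s_{\la(T)}$, and then summing over all $P$-tableaux $T$ produces the theorem. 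The map I would use is a Gasharov-style, RSK-flavored insertion: reading $w$ from left to right, row-insert the entries but with $<_P$ in place of the usual order in the bumping rule (an incoming entry $a$ displaces the leftmost entry $b$ of its row with $a\not <_P b$), recording the new cells in $Q_w$. One then has to verify (i) $T_w$ obeys the three $P$-tableau axioms, (ii) $Q_w$ is standard with $\mathrm{Des}(Q_w)=D(w)$, and (iii) the insertion carries $\mathrm{inv}_G(w)$ to $\mathrm{inv}_G(T_w)$.

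\textbf{Main obstacle.} The crux is (i) together with (iii): that $<_P$-bumping never attempts an illegal placement, and that the $G$-inversions gained and lost along each bumping path cancel, both rest on the interval structure of $P$ (a natural unit interval order is $(3+1)$- and $(2+2)$-free), which tightly constrains how triples of its elements compare. For the specialization $t=1$ this reproves Gasharov's theorem for $(3+1)$-free posets; the genuinely new content for arbitrary $t$ is the inversion bookkeeping in (iii). A variant that avoids RSK, and is closer both to Gasharov's original argument and to the sign-reversing technique used later in this paper, is to evaluate $\langle X_G(\bold{x},t),s_\la\rangle$ directly: expand $s_\la$ in the complete homogeneous basis via inverse Kostka numbers (realized combinatorially by special rim-hook tabloids), note that $\langle X_G(\bold{x},t),h_\mu\rangle$ is a positive $t$-polynomial counting ordered partitions of $P$ into chains of sizes $\mu$ weighted by an $\mathrm{asc}$-type statistic, and then define a sign-reversing, weight-preserving involution on pairs (rim-hook tabloid, chain-partition) whose fixed points are exactly the $P$-tableaux of shape $\la$ with the matching value of $\mathrm{inv}_G$; there too the interval-order hypothesis is precisely what makes the involution well defined. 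Either way, the heart of the argument is showing that the chosen combinatorial operation stays inside the class of $P$-tableaux and respects $\mathrm{inv}_G$.
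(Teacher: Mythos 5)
The paper does not actually prove this theorem; it is quoted from Shareshian--Wachs and Gasharov, so your proposal can only be measured against the published proofs. Your Step 1 is correct and is essentially Shareshian--Wachs's expansion of $X_G(\bold{x},t)$ into Gessel's fundamental quasisymmetric functions: the strictness of the color sequence at position $k$ is forced exactly when $w_k>w_{k+1}$ or $\{w_k,w_{k+1}\}\in E$, and checking only consecutive pairs suffices because a non-adjacent increasing pair is $P$-comparable, so a run of consecutive non-descents is automatically a chain. That part is sound modulo the (real but routine) care needed to align the word statistic with the tableau statistic $\mathrm{inv}_G$.

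Step 2 is where the entire content of the theorem lives, and you have not supplied it. The proposed $<_P$-insertion is asserted, not constructed: you would need to prove that bumping ``the leftmost $b$ with $a\not<_P b$'' keeps every row a $<_P$-chain, that the displaced entry lands so that condition (3) ($a_{i+1,j}\not<_P a_{i,j}$) holds in every column, that the shape grows by a corner so that $Q_w$ is standard, that the map is invertible, that $\mathrm{Des}(Q_w)$ equals your set $D(w)$ (which depends on the edge set $E$, so the classical RSK descent lemma does not apply), and that $\mathrm{inv}_G$ is preserved even though bumping changes which pairs lie one below the other. None of these is checked, the $(3+1)$- and $(2+2)$-freeness is invoked but never actually used in an argument, and no insertion with all of these properties is known to exist in general; this is why the literature does not argue this way. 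Your alternative sketch is indeed the route actually taken by Gasharov and by Shareshian--Wachs: expand $s_\lambda$ over $h_\mu$ (via the Jacobi--Trudi determinant $\det(h_{\lambda_i-i+j})$ rather than rim-hook tabloids), interpret $\langle X_G,h_\mu\rangle$ through ordered sequences of chains arranged as left-justified arrays, and cancel the non-$P$-tableau arrays by a sign-reversing, $\mathrm{inv}_G$-preserving involution whose well-definedness is exactly where the unit-interval hypothesis enters. But you have not defined that involution either, nor verified that it preserves the $t$-statistic. As it stands the proposal is a plausible plan in which the decisive combinatorial lemma is missing from both routes.
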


\vspace{1mm}
We will start our argument in Section~\ref{sec:main} and Section~\ref{sec:last}, from Theorem~\ref{thm:Gasha} to deal with the $e$-positivity and the $e$-unimodality in Conjecture~\ref{conj:SW}. We state a simple lemma due to Stanley and make some useful remarks.

\vspace{1mm}
\begin{lem}[\cite{S3}]\label{lem:basic} 
If  $A(t)$  and $B(t)$ are unimodal and palindromic polynomials with nonnegative coefficients and centers of symmetry $m_A$, $m_B$ respectively, then  $A(t)B(t)$ is unimodal and palindromic with nonnegative coefficients and center of symmetry $m_A+m_B$.
\end{lem}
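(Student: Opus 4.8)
The plan is to prove the three assertions—nonnegativity, palindromicity, and unimodality—in that order, since each is progressively harder and the later ones build on the earlier. Write $A(t)=\sum_{i=0}^{2m_A} a_i t^i$ and $B(t)=\sum_{j=0}^{2m_B} b_j t^j$, so that $a_i,b_j\ge 0$, the palindromic conditions read $a_i=a_{2m_A-i}$ and $b_j=b_{2m_B-j}$, and the unimodal conditions read $a_0\le a_1\le\cdots\le a_{m_A}$ and $b_0\le b_1\le\cdots\le b_{m_B}$ (with the reversed inequalities past the center, which follow from palindromicity). Set $C(t)=A(t)B(t)=\sum_k c_k t^k$ with $c_k=\sum_{i+j=k} a_i b_j$. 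Nonnegativity of the $c_k$ is immediate since they are sums of products of nonnegative reals. For palindromicity, the clean route is the functional-equation form: $A(t)=t^{2m_A}A(t^{-1})$ and $B(t)=t^{2m_B}B(t^{-1})$ together give $C(t)=A(t)B(t)=t^{2m_A+2m_B}A(t^{-1})B(t^{-1})=t^{2(m_A+m_B)}C(t^{-1})$, which says exactly that $C$ is palindromic with center of symmetry $m_A+m_B$. (If $A$ or $B$ has odd degree the same argument works with half-integer centers; I would simply phrase everything in terms of the functional equation to sidestep parity bookkeeping.)

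The substantive part is unimodality, and this is where I expect the main obstacle. The standard trick is to decompose each palindromic unimodal polynomial into a nonnegative combination of the ``atomic'' palindromic unimodal polynomials $(t^a + t^{a+1} + \cdots + t^{2m_A - a})$ for $0\le a\le m_A$. Concretely, writing $\gamma_a = a_a - a_{a-1}\ge 0$ (with $a_{-1}:=0$), one has $A(t)=\sum_{a=0}^{m_A}\gamma_a\,(t^a+t^{a+1}+\cdots+t^{2m_A-a})$, and similarly $B(t)=\sum_{b=0}^{m_B}\delta_b\,(t^b+\cdots+t^{2m_B-b})$ with $\delta_b\ge 0$. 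Then $C(t)$ is a nonnegative combination $\sum_{a,b}\gamma_a\delta_b\,U_{a}(t)V_{b}(t)$ of products of two such atoms. Since a nonnegative combination of palindromic unimodal polynomials that all share the same center of symmetry is again palindromic and unimodal, it suffices to show that the product of two atoms $U_a(t)=t^a+\cdots+t^{2m_A-a}$ and $V_b(t)=t^b+\cdots+t^{2m_B-b}$ is palindromic and unimodal with center $m_A+m_B$; all these products do share that common center, so the reduction is legitimate.

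So the crux reduces to the following elementary fact: for nonnegative integers $p\le q$, the product $(t^0+t^1+\cdots+t^p)(t^0+t^1+\cdots+t^q)$ is palindromic and unimodal (after factoring out the obvious common power $t^{a+b}$ from the two atoms, the remaining factors have this shape with $p=2m_A-2a$, $q=2m_B-2b$ or vice versa). This I would prove by a direct computation of the coefficient of $t^k$ in $(1+t+\cdots+t^p)(1+t+\cdots+t^q)$: it equals the number of pairs $(i,j)$ with $0\le i\le p$, $0\le j\le q$, $i+j=k$, which is $\min(k,p,q,p+q-k)+1$; this expression visibly rises by $1$ at each step up to the plateau of height $p+1$, stays flat, then falls by $1$ each step, so it is unimodal, and it is symmetric under $k\mapsto p+q-k$. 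That finishes the lemma. The only thing to be careful about is making sure the ``common center'' bookkeeping in the reduction step is airtight—i.e., that $U_a V_b$ really has center $m_A+m_B$ independent of $a,b$—but that is immediate since $U_a$ has center $m_A$ and $V_b$ has center $m_B$ for every $a,b$, and centers add under multiplication of palindromic polynomials.
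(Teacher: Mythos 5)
Your proof is correct. Note that the paper does not prove this lemma at all: it is quoted as a known result of Stanley (the reference [S3]) and used as a black box, so there is no in-paper argument to compare against. Your argument is the standard self-contained one: nonnegativity is trivial, palindromicity follows from the functional equations $A(t)=t^{2m_A}A(t^{-1})$ and $B(t)=t^{2m_B}B(t^{-1})$, and unimodality reduces, via the decomposition $A(t)=\sum_a \gamma_a\,(t^a+\cdots+t^{2m_A-a})$ with $\gamma_a=a_a-a_{a-1}\ge 0$ (and likewise for $B$), to the single computation that the coefficient of $t^k$ in $(1+\cdots+t^p)(1+\cdots+t^q)$ equals $\min(k,p,q,p+q-k)+1$, which is visibly symmetric and unimodal; since all the atomic products share the common center $m_A+m_B$, the nonnegative combination inherits both properties. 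The only bookkeeping point worth making explicit in a final write-up is the half-integer-center case (when $A$ or $B$ has an even number of terms), which you correctly flag and which the same decomposition handles, the atoms then having an even number of terms but still the center $m_A$ or $m_B$.
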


\vspace{1mm}
\begin{rem}\label{rem:basic} 
Let $G$ be the incomparability graph of a natural unit interval order and let
$X_G(\bold{x},t)=\sum_{i=0}^{m} a_i(\bold{x})\, t^i= \sum_{\la\vdash n} C_\la(t)\, e_\la(\bold{x})$. Then
\begin{enumerate}
\item $X_G(\bold{x},t)$ is palindromic in $t$ with center of symmetry $\frac{m}{2}$ if and only if $C_\la(t)$ is a palindromic polynomial with center of symmetry $\frac{m}{2}$.
\item $X_G(\bold{x},t)$ is $e$-positive if and only if $C_\la(t)$ is a polynomial with nonnegative coefficients for all $\la$.
\item $X_G(\bold{x},t)$ is $e$-unimodal with center of symmetry $\frac{m}{2}$ if every $C_\la(t)$ is unimodal with the same center of symmetry $\frac{m}{2}$. 
\end{enumerate}
\end{rem} 

\vspace{1mm}
Shareshian and Wachs obtained an explicit formula for the coefficient $C_n(t)$ of $e_{n}$ in the $e$-basis expansion of $X_{G}(\bold{x},t)$, which shows the positivity and the unimodality of $C_n(t)$. 

\vspace{1mm}
\begin{thm}[\cite{SW}]\label{thm:en}
Let $G$ be the incomparability graph of a natural unit interval order with $n$ elements and let $X_G(\bold{x},t)= \sum_{\la\vdash n} C_\la(t)\, e_\la(\bold{x})$. Then  
$$C_n(t)=[n]_t \prod_{i=2}^{n} [b_i]_t$$ 
and is therefore positive and unimodal with center of symmetry $\frac{|E(G)|}{2}$, where 
$[n]_t=1+t+t^2+\cdots+t^{n-1}$
and
$b_i=|\{ \{j,i\}\in E(G)~|~j<i\}|.$  
\end{thm}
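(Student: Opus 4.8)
The plan is to extract the coefficient of $e_n$ directly from Gasharov's formula in Theorem~\ref{thm:Gasha} by identifying which $P$-tableaux contribute to $e_n$ when each Schur function $s_{\la(T)}$ is expanded in the elementary basis via the Jacobi--Trudi identity $s_\la=\det[e_{\la'_i-i+j}]$. The key observation is that $e_n$ appears in $s_\la$ precisely when $\la=(1^n)$ (the single column), in which case $s_{(1^n)}=e_n$, and in no other $s_\la$ with $\la\vdash n$ — indeed $e_n=e_{(1^n)}$ is the $e$-expansion of $s_{(1^n)}$, and no other Schur function of degree $n$ has $e_n$ in its monomial expansion since $s_\la$ for $\la\ne(1^n)$ is a sum of $e_\mu$ with $\mu$ having at least two parts. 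Hence $C_n(t)=\sum_{T}t^{\mathrm{inv}_G(T)}$ where the sum is over all $P$-tableaux $T$ of the single-column shape $(1^n)$.

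Next I would analyze column-shaped $P$-tableaux. A $P$-tableau of shape $(1^n)$ is a filling $a_{1,1},a_{2,1},\dots,a_{n,1}$ of the column with all elements of $P=P(\mathbf m)$, satisfying only condition (3): $a_{i+1,1}\not<_P a_{i,1}$ for all $i$ (condition (2) is vacuous). I would set up a bijective/recursive count: reading the column from bottom to top, I claim one builds such a tableau by choosing, at each stage, an element that is not $<_P$ any element placed so far in a way that respects the constraint, and that the number of valid choices for the entry $a_{i,1}$, once $a_{i+1,1},\dots,a_{n,1}$ are fixed appropriately, can be organized so that the generating function for $\mathrm{inv}_G$ factors. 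Concretely, I expect to prove by induction on $n$ that the set of column $P$-tableaux, weighted by $t^{\mathrm{inv}_G(T)}$, has generating function $[n]_t\prod_{i=2}^n[b_i]_t$: removing the largest element $n$ (which by condition (3) can sit in any position subject to $n\not<_P$ its upper neighbour — but $n$ is maximal so $n<_P$ nothing, forcing only that nothing below constraints it, so $n$ ranges over positions contributing the factor accounting for its $G$-inversions with the $b_n$ vertices adjacent to it), and recognizing the remaining filling as a column $P$-tableau for the induced order on $[n-1]$.

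The main obstacle will be making the inductive/bijective step precise: one must show that deleting a suitably chosen extremal element from a column $P$-tableau yields a column $P$-tableau for a smaller natural unit interval order, that $\mathrm{inv}_G$ splits additively as (inversions involving the deleted element) plus (inversions of the smaller tableau), and — the delicate point — that the number of valid positions for the deleted element, weighted appropriately, is exactly $[b_i]_t$ (with the extra global factor $[n]_t$ arising from the freedom in placing element $1$ or equivalently from the palindromicity normalization). Establishing the correct extremal element to delete and verifying the range of its admissible positions is where care is needed; I would handle it by noting that in a natural unit interval order the vertices adjacent to $i$ and smaller than $i$ form the interval $\{i-b_i,\dots,i-1\}$, so the positional freedom of $i$ relative to these $b_i$ neighbours in the column is genuinely an interval of size $b_i$, giving the factor $[b_i]_t$. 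Palindromicity and unimodality of the resulting product then follow immediately from Lemma~\ref{lem:basic}, since each $[k]_t$ is palindromic and unimodal with center $\frac{k-1}{2}$, and the centers sum to $\frac{n-1}{2}+\sum_{i=2}^n\frac{b_i-1}{2}=\frac{1}{2}\bigl((n-1)+\sum_{i=2}^n(b_i-1)\bigr)=\frac{|E(G)|}{2}$ because $\sum_{i=2}^n b_i=|E(G)|$ and the $(n-1)-(n-1)$ bookkeeping cancels — more precisely $\deg C_n(t)=(n-1)+\sum_{i=2}^n(b_i-1)=|E(G)|$ once one checks $\sum_i b_i=|E(G)|$ and that exactly $n-1$ of the factors are "shifted," which I would verify directly from the definition of $\mathbf m$.
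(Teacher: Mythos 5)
The paper does not actually prove this theorem; it is quoted from Shareshian--Wachs \cite{SW} and used as a black box, so there is no internal proof to compare against. Judged on its own terms, your proposal fails at its very first concrete step. The claim that ``$e_n$ appears in $s_\la$ precisely when $\la=(1^n)$'' is false: the coefficient of $e_{(n)}$ in the $e$-expansion of $s_\la$ is $\pm 1$ for \emph{every} hook shape $\la=(k,1^{n-k})$, not just the column. Indeed the paper itself displays $s_{2^11^{n-2}}=e_{(n-1,1)}-e_n$ in Section~\ref{sec:firstmain}, so already $s_{21^{n-2}}$ contributes $-B_{21^{n-2}}(t)$ to $C_n(t)$. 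Consequently your identification $C_n(t)=\sum_{T\ \mathrm{of\ shape}\ (1^n)}t^{\mathrm{inv}_G(T)}=B_{1^n}(t)$ is wrong; the correct statement is that $C_n(t)$ is a signed sum of $B_\la(t)$ over all hooks $\la$. A minimal counterexample: for $P=P(2,3)$ (the path $1-2-3$) one has $b_2=b_3=1$, so the theorem gives $C_3(t)=[3]_t=1+t+t^2$, whereas the four column $P$-tableaux give $B_{1^3}(t)=1+2t+t^2$; the discrepancy is exactly $B_{21}(t)=t$. This same example shows that the inductive factorization you sketch for column tableaux cannot work, since $1+2t+t^2$ does not factor as $[3]_t\prod[b_i]_t$.

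The remainder of the proposal is a plan rather than an argument (``I expect to prove by induction\dots'', ``the delicate point\dots''), and because it is built on the incorrect identification above, the quantity it proposes to factor is not the one the theorem is about. To salvage the approach along these lines you would need either (i) a sign-reversing involution pairing off hook-shaped $P$-tableaux so that the surviving set has generating function $[n]_t\prod_{i=2}^n[b_i]_t$ (this is essentially the strategy the paper deploys for \emph{other} coefficients in Sections~\ref{sec:main} and~\ref{sec:last}, starting from Theorem~\ref{thm:Gasha} and the Jacobi--Trudi expansion), or (ii) an entirely different extraction of the $e_n$-coefficient, as in \cite{SW}. The final unimodality/palindromicity deduction from Lemma~\ref{lem:basic} and the degree count $\sum_{i=2}^n b_i=|E(G)|$ is fine, but it rests on the product formula you have not established.
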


\vspace{1mm}
\begin{rem} \label{rem:uni} 
We summarize known results on $e$-positivity and $e$-unimodality of $X_{{\rm inc}(P)}(\bold{x}, t)$ for natural unit interval orders $P=P(\bold{m})$ with $\bold{m}=(m_1,m_2,\dots,m_{n-1})$.
\begin{enumerate}
\item Let $\bold{m}=(2,3,\dots,n)$. The incomparability graph of $P$ is the path $G_{n,2}=1-2-\cdots-n$. Stanley \cite{SWE} obtained the formula for the generating function and Haiman \cite{Hai} showed that $X_{G_{n,2}}(\bold{x},t)$ is $e$-positive and $e$-unimodal. 
\item Let $\bold{m}=(m_1, m_2,n,\dots,n)$. Shareshian and Wachs \cite{SW} obtained an $e$-positive and $e$-unimodal explicit formula of $X_{{\rm inc}(P)}(\bold{x},t)$. 
\item For a composition $\alpha=(\alpha_1,\alpha_2,\dots, \alpha_k)$ of $n+k-1$, let $m_1=\cdots=m_{\alpha_1-1}=\alpha_1$ and $m_i=\sum_{j=1}^{\ell}(\alpha_j-1)+1$ if $\sum_{j=1}^{\ell-1}(\alpha_j-1)+1 \leq i \leq \sum_{j=1}^{\ell}(\alpha_j-1)$ for $\ell=2,3,\dots,k$. The incomparability graph of $P$ is the {\it $K_{\alpha}$-chain}, the concatenate of complete graphs whose sizes are given by the parts of $\alpha$. 
Gebhard and Sagan \cite{GS} proved that $X_{{\rm inc}(P)}(\bold{x})$ is $e$-positive.  
\item Let $\bold{m}=(2,3,\dots, m+1,n,\dots,n)$. The incomparability graph of $P$ is a {\it lollipop graph} $L_{m,n-m}$ that is obtained by connecting the vertex $m$ of the path $1-2-\cdots-m$ and the vertex $m+1$ of the complete graph on $\{m+1,m+2,\dots,n\}$ as an edge. Dahlberg and van Willigenburg \cite{DW} computed an explicit $e$-positive formula for $X_{L_{m,n-m}}(\bold{x})$. 
\item Let $\bold{m}=(r, m_2, m_3 \dots,m_r, n,\dots,n)$. The incomparability graph of $P$ is a complement graph of a bipartite graph. Stanley and Stembridge \cite{SS} proved that $X_G(\bold{x})$ is $e$-positive. 
\end{enumerate}
\end{rem}

\vspace{2mm}
\subsection{Basic setup}\label{sec:nui}
We often use Catalan paths of length $n$ from $(0,0)$ to $(n,n)$ to represent natural unit interval orders of size $n$: The corresponding Catalan path of $P=P(m_1, m_2, \dots, m_{n-1})$ has the $i$th horizontal step on the line $y=m_i$ for all $i=1,2,\dots,n-1$ and the last horizontal step on the line $y=n$. Then the first $m_1$ vertical steps are on the line $x=0$ and the $i$th vertical step for $m_1<i\leq n$ is on the line $x=k$, where $k$ is the largest element $j$ such that $\{j,i\}\not\in E({\rm inc}(P))$ with $j < i$. Figure \ref{Fig1} shows the corresponding Catalan path of $P(3,3,4,6,6)$ and its incomparability graph. 

\begin{figure}[h]
\centering {\includegraphics[height=3.3cm]{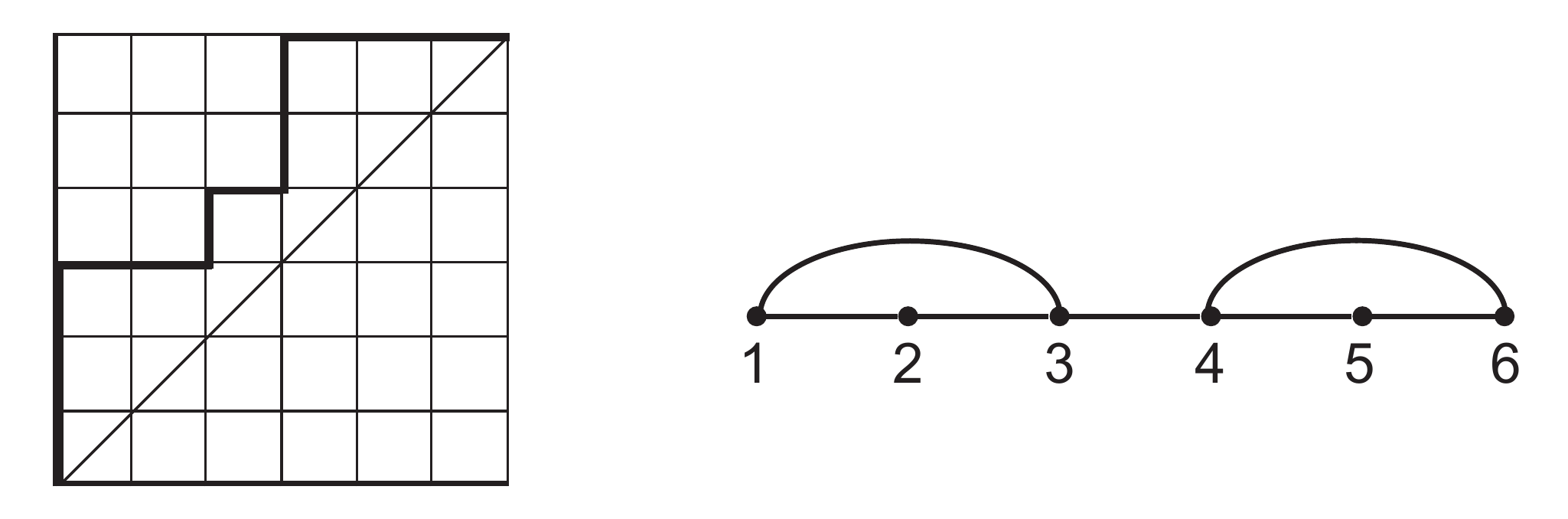} }
\caption{The corresponding Catalan path of $P=P(3,3,4,6,6)$ and ${\rm inc}(P)$. 
}\label{Fig1}
\end{figure}

Following lemma is immediate from the definitions.

\begin{lem} Let $P=P(m_1,m_2,\dots,m_{n-1})$ be a  natural unit interval order and $C$ be the corresponding Catalan path. Then, the incomparability graph $G={\rm inc}(P)$ is connected if and only if $m_i>i$ for all $i=1, \dots, n-1$ or equivalently, $C$ meets the line $y=x$ only at $(0,0)$ and $(n,n)$.
\end{lem}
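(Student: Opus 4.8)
The plan is to prove the two asserted equivalences separately, each directly from the definitions. For the first, I would start from the edge description of ${\rm inc}(P)$ recorded just after the definition of $P(\mathbf{m})$: for $i<j$ one has $\{i,j\}\in E(G)$ exactly when $j\le m_i$. Taking $j=i+1$, the edge $\{i,i+1\}$ is present precisely when $m_i\ge i+1$, and since $m_i\ge i$ always holds this is the same as $m_i>i$. Hence if $m_i>i$ for all $i=1,\dots,n-1$ then the path $1-2-\cdots-n$ is a spanning subgraph of $G$, so $G$ is connected. Conversely I would argue contrapositively: if $m_k=k$ for some $k\in\{1,\dots,n-1\}$ (the only alternative to $m_k>k$, since $m_k\ge k$), then for every $i\le k$ and every $j\ge k+1$ the monotonicity of $\mathbf{m}$ gives $m_i\le m_k=k<j$, so $\{i,j\}\notin E(G)$; thus no edge joins $\{1,\dots,k\}$ to $\{k+1,\dots,n\}$ and $G$ is disconnected.

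For the second equivalence I would use the description of the Catalan path $C$ from Section~\ref{sec:nui}. Setting $m_0=0$ and $m_n=n$, between its $k$th and $(k+1)$th horizontal steps the path $C$ runs up the vertical line $x=k$ from height $m_k$ to height $m_{k+1}$. Consequently, for $1\le k\le n-1$, the point $(k,k)$ lies on $C$ if and only if $m_k\le k\le m_{k+1}$; as $m_k\ge k$ and $m_{k+1}\ge k+1$ always hold, this is equivalent to $m_k=k$. Therefore $C$ meets the diagonal only at $(0,0)$ and $(n,n)$ precisely when $m_k\neq k$, i.e.\ $m_k>k$, for all $k=1,\dots,n-1$, which is exactly the condition appearing in the first part.

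There is no real obstacle here: the statement is, as remarked, immediate from the definitions. The only places calling for mild care are the repeated use of the inequality $m_i\ge i$ to upgrade $m_i\ge i+1$ (resp.\ $k\le m_k$) to $m_i>i$ (resp.\ $m_k=k$), and, in the disconnected direction, invoking monotonicity of $\mathbf{m}$ so as to exclude \emph{all} edges across the cut rather than only those incident to the vertex $k$.
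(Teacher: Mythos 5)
Your proof is correct, and it takes the only natural route: the paper itself gives no argument (it just declares the lemma ``immediate from the definitions''), and your write-up is precisely the expected unwinding of those definitions. Both directions are handled cleanly, including the two points that actually need a word of care --- using monotonicity of $\mathbf{m}$ to kill \emph{every} edge across the cut at $k$ when $m_k=k$, and using $m_k\ge k$, $m_{k+1}\ge k+1$ to see that $(k,k)\in C$ forces $m_k=k$.
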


If a finite simple graph $G$ is a disjoint union of subgraphs $G_1, \dots, G_\ell$, then $$X_{G}(\bold{x},t)=\prod_{i=1}^\ell X_{G_{i}}(\bold{x},t)\,.$$ Hence, we may restrict our attention to natural unit interval orders $P(m_1,m_2,\dots, m_{n-1})$ with $m_i\neq i$ for all $i=1,2,\dots,n-1$. The corresponding Catalan paths of these unit interval orders are the {\it prime} Catalan paths which meet the diagonal line $y=x$ only on the two end points. There are $C_{n-1}$ prime Catalan paths of length $n$.

Given a Catalan path $C$ of length $n$, the {\it bounce path} of $C$ is the path you travel in the following way: Starting at $(0,0)$, travel north along $C$ until you encounter the beginning of an east step of $C$. Then turn east and travel straight until you hit the main diagonal $y=x$ and turn north to travel until you again encounter the beginning of an east step of $C$.Then turn east and travel to the diagonal, and so on. Continue in this way until you arrive at $(n,n)$. The \emph{bounce number} of $C$ is the number of connected regions that the bounce path of $C$ and the line $y=x$ enclose; that is one less than the number of times the bounce path of $C$ meets the line $y=x$. For example, the bounce number of the Catalan path corresponding to $P(3, 3, 4, 6, 6)$ is $2$.

The followings are important lemmas that we will use in Sections~\ref{sec:main} and \ref{sec:last}.

\vspace{1mm}
\begin{lem} \label{lem:chain}
Let $C$ be the corresponding Catalan path of a natural unit interval order $P$ and let $r$ be the bounce number of $C$. If $$X_{{\rm inc}(P)}(\bold{x},t)=\sum_{\la}B_{\la}(t)s_{\la},$$ 
then $B_{\la}(t)=0$ for partitions $\la=(\la_1, \dots, \la_k)$ with $\la_1>r$. 
\end{lem}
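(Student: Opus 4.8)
The plan is to show that no $P$-tableau can have first row longer than $r$, and then invoke Theorem~\ref{thm:Gasha}. Recall that $B_\la(t) = \sum_{T : \la(T)=\la} t^{\mathrm{inv}_G(T)}$, so it suffices to prove that every $P$-tableau $T$ has $\la(T)_1 \le r$. Equivalently, $\mathrm{inc}(P)$ has no clique of size $r+1$: indeed, the first row $a_{1,1} <_P a_{1,2} <_P \cdots <_P a_{1,\la_1}$ of a $P$-tableau is a chain in $P$, hence an independent set in $\mathrm{inc}(P)$, so if a $P$-tableau of shape $\la$ with $\la_1 > r$ existed, $P$ would contain a chain of length $> r$; conversely, by Dilworth-type reasoning (or directly building a tableau greedily), if $P$ has a chain of length $\la_1$ one gets a $P$-tableau whose first row has that length. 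So the real content is the combinatorial claim: \emph{the bounce number $r$ of $C$ equals the length of the longest chain in $P$} (equivalently, the clique cover / antichain structure gives $\omega(\mathrm{inc}(P)) \le$ longest antichain, but what we want is the chain bound).

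First I would translate "longest chain in $P$" into the Catalan-path picture. A chain $i_1 <_P i_2 <_P \cdots <_P i_k$ means $i_{j+1} > m_{i_j}$ for each $j$, i.e. each successive element lies strictly above the horizontal step at height $m_{i_j}$. Reading the definition of the bounce path: starting at $(0,0)$ one goes up to the first east step (at height $m_1$), moves east to the diagonal (reaching column $m_1$), then up to the next east step of $C$ above that, and so on; the bounce number $r$ counts how many such "bounces" occur before reaching $(n,n)$. I would set up the bounce path's turning points $0 = c_0 < c_1 < c_2 < \cdots < c_r = n$ on the diagonal, where $c_{j} = m_{c_{j-1}+1}$ roughly (the height of the east step encountered after going up from column $c_{j-1}$, using that $C$ is prime so it is strictly above the diagonal in between). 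The key monotonicity fact is: if $i \le c_{j-1}$, can any $P$-chain jump from the $j$-th "block" to beyond the $(j{+}1)$-th? Because $m$ is weakly increasing and $m_i \le n$, one shows $m_i \ge c_j$ whenever $i > c_{j-1}$, so one step of a $P$-chain starting in block $j$ lands in block $\le j$ shifted appropriately — pinning the maximum chain length to exactly $r$.

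Concretely, the main steps are: (1) record $B_\la(t) = \sum_{T:\la(T)=\la} t^{\mathrm{inv}_G(T)}$ from Theorem~\ref{thm:Gasha}, so $B_\la(t)=0$ iff there is no $P$-tableau of shape $\la$; (2) observe a $P$-tableau of shape $\la$ exists iff $P$ has a chain of $\la_1$ elements (the "only if" is immediate since the first row is a $P$-chain; for completeness note we only need "only if"); (3) prove that every chain in $P$ has at most $r$ elements by the bounce-path argument sketched above, using that $\bold{m}$ is weakly increasing and the prime/bounce structure forces a chain to advance by at most one bounce-block per step; hence $\la(T)_1 \le r$ for every $P$-tableau $T$, giving $B_\la(t)=0$ for $\la_1 > r$. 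The main obstacle is Step (3): making the correspondence between "one covering step in a $P$-chain" and "crossing one bounce region" precise, i.e. proving the inequality $m_i \ge c_j$ for $i > c_{j-1}$ cleanly from the definition of the bounce path and the monotonicity of $\bold{m}$; once that inequality is in hand the chain-length bound is a one-line induction. (It is also worth remarking that in fact $r$ equals the longest chain length exactly, since the elements $1, c_1+1$-type representatives on the bounce corners form a chain of length $r$ — but only the upper bound is needed here.)
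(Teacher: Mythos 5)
Your argument is correct and is essentially the paper's own proof: the paper likewise partitions $[n]$ into the $r$ intervals cut out by the bounce points $b_k=m_{b_{k-1}+1}$, uses the monotonicity of $\mathbf{m}$ to see that two elements of the same interval are incomparable (exactly your inequality $m_i\ge c_j$ for $i>c_{j-1}$), concludes that every chain of $P$ has at most $r$ elements so no $P$-tableau has first row longer than $r$, and finishes with Theorem~\ref{thm:Gasha}. The only blemish is the stray phrase ``$\mathrm{inc}(P)$ has no clique of size $r+1$'' --- a chain of $P$ is an independent set, not a clique, of $\mathrm{inc}(P)$ --- but you correct this within the same sentence and it does not affect the argument.
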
 
\begin{proof}
Suppose that the bounce path of $C$ meets the line $y=x$ at $r+1$ vertices,
$$(0,0)=(b_0,b_0), ~(b_1,b_1), ~(b_2,b_2), ~\dots, ~(b_{r-1},b_{r-1}), ~(b_r,b_r)=(n,n),$$ 
with $0=b_0<b_1<\cdots<b_{r-1}<b_r=n$ and let $I_k=\{x \in P ~|~ b_{k-1}<x\leq b_k\}$ for $k=1,2,\dots,r$. Note that, if $i<_Pj$ for $i\in I_k$ and $j \in I_{\ell}$, then $k<\ell$ since $i$ and $j$ in the same set $I_k$ are incomparable. Hence, $P$ has no chain with more than $r$ elements and there is no $P$-tableau $T$ of shape $\la$ with $\la_1>r$. Since  $B_{\la}(t)=\sum t^{{\rm inv}_G(T)}$ where the sum is over all $P$-tableaux of shape $\la$ by Theorem \ref{thm:Gasha}, this shows that $B_{\la}(t)=0$ if $\la_1>r$.
\end{proof}

Palindromic property of $X_G(\bold{x},t)$ in Theorem~\ref{thm:SW} implies the following lemma:

\vspace{1mm}
\begin{lem} \label{lem:reflection}
Given a natural unit interval order $P$ with $n$ elements, let $C$ be the corresponding Catalan path of $P$. If $\widetilde{C}$ is the reflection of the Catalan path $C$ about the line $y=n-x$, then 
$$X_{G}(\bold{x},t)=X_{\widetilde{G}}(\bold{x},t),$$ 
where $\widetilde{P}$ is the corresponding natural unit interval order of the Catalan path $\widetilde{C}$ and $G={\rm inc}(P)$, $\widetilde{G}={\rm inc}(\widetilde{P})$ are incomparability graphs. 
\end{lem}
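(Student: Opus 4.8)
The plan is to exhibit an explicit shape-preserving bijection between $P$-tableaux for $P$ and $\widetilde{P}$ that sends $G$-inversions to $\widetilde{G}$-inversions, and then invoke Gasharov's formula (Theorem~\ref{thm:Gasha}). First I would make the reflection concrete at the level of the order $\bold{m}$. If $P=P(m_1,\dots,m_{n-1})$ has Catalan path $C$, reflecting $C$ about $y=n-x$ swaps the roles of horizontal and vertical steps read from the opposite corner, so $\widetilde{P}=P(\widetilde{m}_1,\dots,\widetilde{m}_{n-1})$ where one reads off $\widetilde{m}_i$ from the reflected path; concretely the relation $i<_P j$ for $i<j$ becomes $n+1-j<_{\widetilde P} n+1-i$. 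In other words, the order-reversing, element-complementing map $w\colon [n]\to[n]$, $w(x)=n+1-x$, is a poset isomorphism from $P$ to the \emph{dual} $\widetilde{P}^{\,*}$ of $\widetilde P$ — equivalently, $x<_P y$ iff $w(y)<_{\widetilde P} w(x)$. This identification is the conceptual core of the argument and the step I expect to be the main obstacle: one must check carefully that the reflected path indeed corresponds to a natural unit interval order and that the correspondence of order relations is exactly the complement-and-reverse map. The Catalan-path description in Section~\ref{sec:nui} is precisely the tool for this; it should be a routine but slightly fiddly verification of which lines the steps of $\widetilde C$ lie on.

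Next I would transport $P$-tableaux. Given a $P$-tableau $T=[a_{i,j}]$ of shape $\la$, apply $w$ entrywise and then rotate the Young diagram by $180^\circ$ — that is, define $\widetilde T$ of the same shape $\la$ whose entry in row $i$, column $j$ (reading within row $i$ of length $\la_i$) is obtained by placing $w(a_{i,\la_i+1-j})$, and simultaneously reversing the row order, i.e.\ row $i$ of $\widetilde T$ comes from row $k+1-i$ of $T$ when $\la$ has $k$ rows — wait, since $\widetilde T$ must also have shape $\la$ (a partition), the clean statement is: $\widetilde T$ has the same shape $\la$, its rows are increasing in $\widetilde P$, and the map $T\mapsto \widetilde T$ is built from (a) applying $w$, which turns $<_P$-increasing rows into $<_{\widetilde P}$-decreasing sequences, then (b) reversing each row left-to-right to restore $<_{\widetilde P}$-increasing rows, then (c) reversing the vertical order of rows so that condition (3) of the $P$-tableau definition is preserved. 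Checking that conditions (1)–(3) for a $\widetilde P$-tableau hold for $\widetilde T$ is a direct translation of conditions (1)–(3) for $T$ under the order-reversing isomorphism $w$: condition (2) $a_{i,j}<_{\widetilde P}a_{i,j+1}$ follows from left-right row reversal, and condition (3) $a_{i+1,j}\not<_{\widetilde P}a_{i,j}$ becomes, after the two reversals, exactly condition (3) for $T$. Since the composite of these reversals is an involution, $T\mapsto\widetilde T$ is a bijection between $P$-tableaux and $\widetilde P$-tableaux preserving shape.

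Finally I would track the inversion statistic. An edge $\{i,j\}$ with $i<j$ lies in $E(G)={\rm inc}(P)$ iff $i$ and $j$ are $P$-incomparable iff $w(i)=n+1-i$ and $w(j)=n+1-j$ are $\widetilde P$-incomparable, i.e.\ $\{n+1-i,\,n+1-j\}\in E(\widetilde G)$; note $i<j$ iff $n+1-j<n+1-i$. A $G$-inversion of $T$ is an edge $\{i,j\}$, $i<j$, with $i$ appearing in a lower row than $j$ in $T$. After applying $w$ the values become $n+1-i>n+1-j$, and after reversing the row order the row containing $n+1-i$ is now \emph{above} the row containing $n+1-j$; thus the smaller of the two values, $n+1-j$, appears below the larger, $n+1-i$ — which is exactly the definition of a $\widetilde G$-inversion of $\widetilde T$. (Horizontal reversal within a row does not affect which row an element sits in, so it is irrelevant here.) Hence ${\rm inv}_G(T)={\rm inv}_{\widetilde G}(\widetilde T)$ for every $P$-tableau $T$. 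Summing over all $P$-tableaux and using Theorem~\ref{thm:Gasha} on both sides gives
\[
X_G(\bold{x},t)=\sum_{T}t^{{\rm inv}_G(T)}s_{\la(T)}=\sum_{\widetilde T}t^{{\rm inv}_{\widetilde G}(\widetilde T)}s_{\la(\widetilde T)}=X_{\widetilde G}(\bold{x},t),
\]
as claimed.

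(As an alternative shortcut, one may bypass $P$-tableaux entirely: the map $w$ induces a bijection on proper colorings $\kappa\mapsto\kappa\circ w$ of $G$ and $\widetilde G$, and a direct check shows it sends ascents of $\kappa$ to ascents of $\kappa\circ w$, since $\{i,j\}\in E(G)$ with $i<j$ and $\kappa(i)<\kappa(j)$ corresponds to $\{n+1-j,n+1-i\}\in E(\widetilde G)$ with $n+1-j<n+1-i$ and $(\kappa\circ w)(n+1-j)=\kappa(j)>\kappa(i)=(\kappa\circ w)(n+1-i)$ — so actually ascents map to \emph{descents}; one then uses the palindromicity $X_{\widetilde G}(\bold{x},t)=t^{|E|}X_{\widetilde G}(\bold{x},t^{-1})$ from Theorem~\ref{thm:SW} together with $|E(G)|=|E(\widetilde G)|$ to conclude. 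This is the route suggested by the sentence preceding the lemma, and it is probably the shortest proof.)
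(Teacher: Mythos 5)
Your parenthetical ``shortcut'' at the end is in fact the paper's own proof, and it is correct: the paper verifies (by tracking which lines the steps of $\widetilde{C}$ lie on) that $\widetilde{G}$ is exactly $G$ with each vertex $i$ relabelled as $n-i+1$, observes that $\kappa\mapsto\widetilde{\kappa}$ with $\widetilde{\kappa}(i)=\kappa(n-i+1)$ is a monomial-preserving bijection on proper colorings with ${\rm asc}(\kappa)+{\rm asc}(\widetilde{\kappa})=|E(\widetilde{G})|$, and then closes the gap between $t^{|E|}X_{\widetilde{G}}(\bold{x},t^{-1})$ and $X_{\widetilde{G}}(\bold{x},t)$ by the palindromicity of Theorem~\ref{thm:SW}. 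You correctly identified that ascents go to descents and that palindromicity is the missing ingredient. The only part you wave at rather than prove is the identification of $\widetilde{G}$ with the relabelled $G$; the paper spends most of its proof on exactly that computation, so you were right to flag it as the real content, but you should actually carry it out.

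Your primary route, however, has a genuine gap. The proposed map on $P$-tableaux --- apply $w(x)=n+1-x$ entrywise, reverse each row, and reverse the vertical order of the rows --- is not well defined for non-rectangular shapes: if $\la=(\la_1,\dots,\la_k)$ with $\la_1>\la_k$, reversing the vertical order of the rows produces row lengths $\la_k,\dots,\la_1$, which is not a Young diagram of partition shape, so the image is not a $\widetilde{P}$-tableau of shape $\la$ at all. You noticed this tension (``wait, since $\widetilde{T}$ must also have shape $\la$\dots'') but the ``clean statement'' you substitute never resolves it. Nor can you simply drop the vertical reversal: without it, condition (3) of the $P$-tableau definition, $a_{i+1,j}\not<_P a_{i,j}$, translates under the order-reversing map $w$ into $\widetilde{a}_{i,j'}\not<_{\widetilde{P}}\widetilde{a}_{i+1,j''}$ (with shifted column indices, since rows of different lengths reverse to different column alignments), which is the wrong direction of the required inequality $\widetilde{a}_{i+1,j}\not<_{\widetilde{P}}\widetilde{a}_{i,j}$. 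A shape- and inversion-preserving bijection between $P$-tableaux and $\widetilde{P}$-tableaux is not obvious; indeed the equality of the coefficients $B_\la(t)$ for $P$ and $\widetilde{P}$ is most easily obtained as a \emph{consequence} of the lemma (via Theorem~\ref{thm:Gasha} and linear independence of Schur functions), not as a step toward it. I recommend discarding the tableau route and promoting your parenthetical argument, with the Catalan-path relabelling verification written out, to the actual proof.
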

\begin{proof}
For a natural unit interval order $P=P(m_1,m_2, \dots,m_{n-1})$ and its incomparability graph $G$, the corresponding Catalan path $C$ of $P$ has the $i$th horizontal step on the line $y=m_i$ and $|\{\{i,j\}\in E(G)~|~i<j \}|=m_i-i$ for $i=1,2,\dots,n-1$. Since  $\widetilde{C}$ is the reflection of $C$, it is again a Catalan path and has the $(n-i+1)$st vertical step on the line $x=n-m_i$. Note that, by the construction of the corresponding Catalan path, $n-m_i$ is the largest element $j$ such that $\{j,n-i+1\}\not\in E(\widetilde{G})$ with $j<n-i+1$. Hence, for a fixed $i$, $|\{\{j,n-i+1\}\in E(\widetilde{G})~|~j<n-i+1 \}|=(n-i)-(n-m_i)=m_i-i$.\\
Now, suppose that $C$ has its $i$th vertical step on the line $x=k$. Since $k$ is the largest element $j$ such that $\{j,i\}\not\in E(G)$ with $j<i$, $|\{\{j,i\}\in E(G)~|~j<i\}|=(i-1)-k$ and the $(n-i+1)$st horizontal step in $\widetilde{C}$ is on the line $y=n-k$. Therefore, we have $|\{\{n-i+1,j\}\in E(\widetilde{G})~|~n-i+1<j\}|=(n-k)-(n-i+1)=i-k-1$.

We therefore can conclude that if we replace the vertex $i$ with $n-i+1$ for all $i\in V(G)$, then we obtain the graph $\widetilde{G}$. Let $\widetilde{\kappa}:V(\widetilde{G})\rightarrow \mathbb{P}$ be the proper coloring satisfying $\widetilde{\kappa}(i)=\kappa(n-i+1)$ for a proper coloring $\kappa : V(G)\rightarrow \mathbb{P}$, then $\bold{x}_{\kappa}=\bold{x}_{\widetilde{\kappa}}$ and ${\rm asc}(\kappa)+{\rm asc}(\widetilde{\kappa})=|E(\widetilde{G})|$. Thus, we have
$$X_{G}(\bold{x},t)=\sum_{\kappa}t^{{\rm asc}(\kappa)} \bold{x}_{\kappa}=\sum_{\widetilde{\kappa}}t^{|E(\widetilde{G})|-{\rm asc}(\widetilde{\kappa})} \bold{x}_{\widetilde{\kappa}}=t^{|E(\widetilde{G})|}\sum_{\widetilde{\kappa}}(t^{-1})^{{\rm asc}(\widetilde{\kappa})} \bold{x}_{\widetilde{\kappa}}.$$
Finally, the palindromic property of $X_{\widetilde{G}}(\bold{x},t)$ stated in Theorem~\ref{thm:SW} gives that
$$X_{\widetilde{G}}(\bold{x},t)=t^{|E(\widetilde{G})|}X_{\widetilde{G}}(\bold{x},t^{-1})=X_{G}(\bold{x},t)$$
as we desired. 
\end{proof}

\vspace{1mm}
\begin{example} For a natural unit interval order $P=P(3,3,4)$, let $C$ be the corresponding Catalan path. Then $\widetilde{P}=P(2,4,4)$ is the natural unit interval order whose corresponding Catalan path $\widetilde{C}$ is the reflection of $C$. Figure \ref{Fig2} shows $C$, $G={\rm inc}(P)$,  $\widetilde{C}$, and $\widetilde{G}={\rm inc}(\widetilde{P})$. 

\begin{figure}[h]
\centering {\includegraphics[height=3cm]{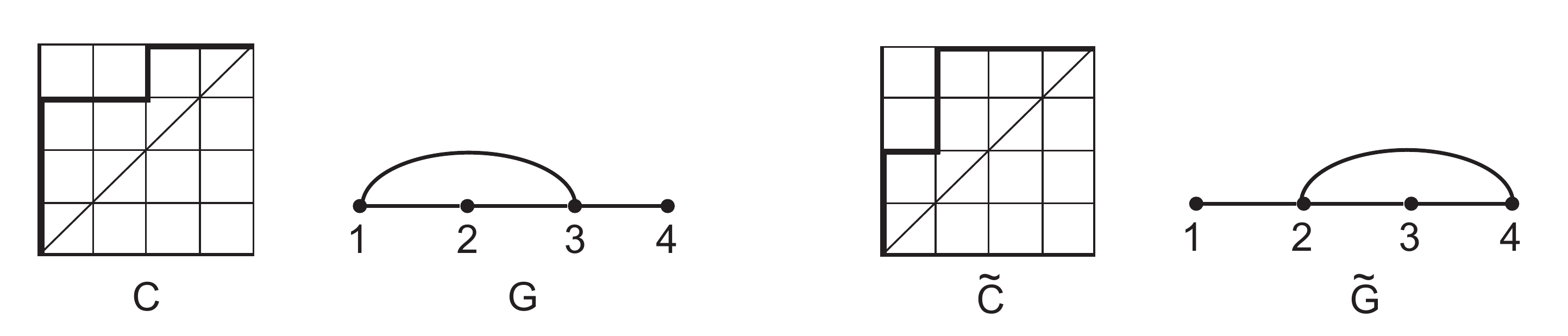} }
\caption{Catalan path $C$, its reflection $\widetilde{C}$, and their incomparability graphs}\label{Fig2}
\end{figure}

\noindent Then the chromatic quasisymmetric functions of $G$ and  of $\widetilde{G}$ are the same; (see Theorem \ref{thm:form1})
$$X_{G}(\bold{x},t)=[2]_t([4]_te_4+t[2]_te_{(3,1)}) =X_{\widetilde{G}}(\bold{x},t).$$
\end{example}

Lemma \ref{lem:reflection} will play an important role in Section~\ref{sec:last} when we deal with the $e$-unimodality.

\vspace{3mm}

\section{Two $e$-positive classes of natural unit interval orders} \label{sec:main}

In this section we provide two classes of natural unit interval orders $P=P(\bold{m})$ satisfying that $X_{{\rm inc}(P)}(\bold{x},t)$ is $e$-positive. One of them is the class of natural unit interval orders mentioned in Remark \ref{rem:uni} (5).

\vspace{1mm}
\subsection{The first $e$-positive class} \label{sec:firstmain}

In this subsection we consider natural unit interval orders $P=P(r,m_2, m_3, \dots, m_{r}, n, \dots, n)$ of size $n$ for a positive integer $r<n$. Let $G$ be the incomparability graph of $P$. Then induced subgraphs of $G$ on $\{1,2,\dots,r\}$ and $\{r+1,r+2, \dots,n\}$ are complete graphs. The first figure in Figure \ref{Fig3} shows a corresponding Catalan path (solid line) of $P$. The dotted line indicates the bounce path and the bounce number of the natural unit interval orders we consider is \emph{two}. The second figure in Figure \ref{Fig3} shows an example of $G={\rm inc}(P)$ with $P=(4,4,5,6,9,9,9,9)$. We can see that $G$ has two complete graphs as subgraphs.

\begin{figure}[h]
\centering {\includegraphics[height=4.3cm]{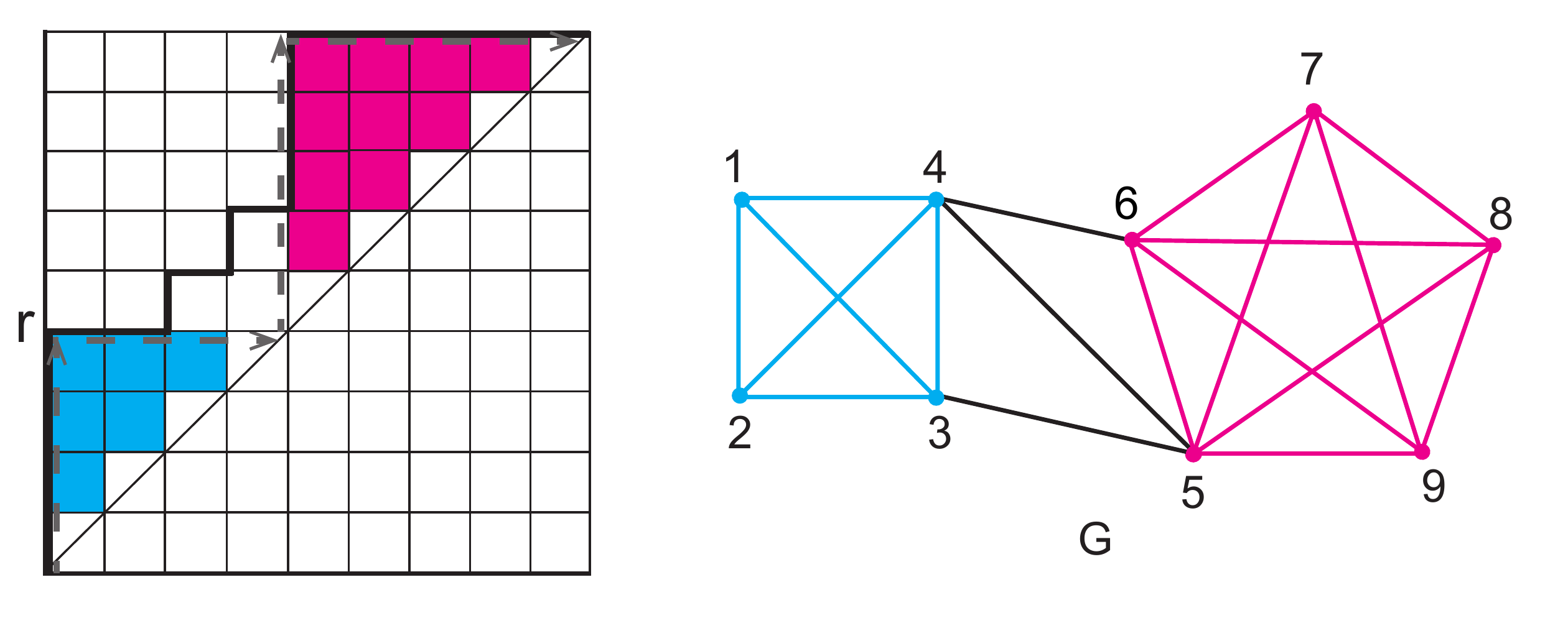} }
\caption{A Catalan path and the bounce path of $P(r,m_2, m_3, \dots, m_{r}, n, \dots, n)$}\label{Fig3}
\end{figure}

If we let
$$X_{G}(\bold{x},t)=\sum_{\la \vdash n}B_{\la}(t)s_{\la},$$
then by Lemma \ref{lem:chain}, $B_{\la}(t)\neq 0$ only for $\la$'s whose conjugate $\la '$ has only two parts. We let $k$ be the largest positive integer satisfying $B_{2^k1^{n-2k}}(t)\neq 0$. Then, due to Jacobi-Trudi identity, we have
\begin{align*}
s_{1^n}&=e_n \\
s_{2^11^{n-2}}&=e_{(n-1,1)}-e_n\\
s_{2^21^{n-4}}&=e_{(n-2,2)}-e_{(n-1,1)}\\
      &~~~\vdots\\
s_{2^k1^{n-2k}}&=e_{(n-k,k)}-e_{(n-k+1,k-1)},             
\end{align*}
Hence, we can derive the $e$-basis expansion of $X_{G}(\bold{x},t)$ as follows.
\begin{eqnarray*}
X_{G}(\bold{x},t)&=&B_{1^n}(t)s_{1^n}+\sum_{\ell=1}^{k}B_{2^{\ell}1^{n-2\ell}}(t)s_{2^{\ell}1^{n-2\ell}}\\
 &=& B_{1^n}(t)e_n+\sum_{\ell=1}^{k}B_{2^{\ell}1^{n-2\ell}}(t)(e_{(n-\ell,\ell)}-e_{(n-\ell+1,\ell-1)})\\
 &=& \sum_{\ell=0}^{k-1}\{B_{2^\ell1^{n-2\ell}}(t)-B_{2^{\ell+1}1^{n-2\ell-2}}(t)\}e_{(n-\ell,\ell)}+B_{2^k1^{n-2k}}(t)e_{(n-k,k)}
\end{eqnarray*}
Note by Remark \ref{rem:uni}, if we show that $B_{2^{\ell}1^{n-2\ell}}(t)-B_{2^{\ell+1}1^{n-2\ell-2}}(t)$ is a polynomial in $t$ with nonnegative coefficients for each $\ell=0,1,\dots,k-1$, then the $e$-positivity of $X_{G}(\bold{x},t)$ is proved. We also know from Theorem \ref{thm:Gasha} that $B_{\la}(t)=\sum_{T\in\mathcal{T}_{P,\la}}t^{\rm{inv}_{G}(T)}$, where $\mathcal{T}_{P,\la}$ is the set of $P$-tableaux of shape $\la$. Our strategy to show the $e$-positivity of $X_{G}(\bold{x},t)$ is to find an weight(inv) preserving injection from $\mathcal{T}_{P,2^{\ell+1}1^{n-2\ell-2}}$ to $\mathcal{T}_{P,2^{\ell}1^{n-2\ell}}$. \\ 

We first define a subset of $\mathcal{T}_{P,2^{\ell}1^{n-2\ell}}$, that will be shown to be in (weight preserving) bijection with $\mathcal{T}_{P,2^{\ell+1}1^{n-2\ell-2}}$.

For a fixed $\ell\in\{0,1,\dots,k-1\}$, let $\mathcal{T'}_{P,2^{\ell}1^{n-2\ell}}$ be the subset of $\mathcal{T}_{P,2^{\ell}1^{n-2\ell}}$, each of whose elements $T=[a_{i,j}]$ has some $s \geq \ell+2$ satisfying $a_{i,1}<_Pa_{s,1}$ for all $i\in\{\ell+1, \ell+2, \dots, s-1\}$. If such $s$ exists, then it is unique since $a_{s,1}>r$ so that $a_{s,1}\not<_Pa_{j,1}$ for all $j>s$.

\vspace{1mm}
\begin{example}
Let $P=P(3,4,6,7,7,7)$ be a natural unit interval order. In this case, $n=7$ and $r=3$. Note that there are only eight relations in $P$,
$$1<_P4,~1<_P5,~1<_P6,~1<_P7,~2<_P5,~2<_P6,~2<_P7,~3<_P7.$$ 
Let us consider two $P$-tableaux $T_1=[a_{i,j}]$ and $T_2=[b_{i,j}]$ in $\mathcal{T}_{P,2^11^5}$ with $\ell=1$.\\

\begin{center}
\begin{ytableau}      
1&5\cr
2\cr
4\cr
6\cr
3\cr
7\cr
\end{ytableau}
\qquad\qquad\qquad\qquad\qquad\begin{ytableau} 
1&5\cr
3\cr
2\cr
7\cr
6\cr
4\cr
\end{ytableau}
\end{center}
$$T_1 \qquad\qquad\qquad\qquad\qquad\qquad\quad T_2$$

\noindent We can see that $T_1\not\in\mathcal{T'}_{P,2^11^5}$ since $2\not<_P4=a_{3,1}$, $4\not<_P6=a_{4,1}$, $2\not<_P 3=a_{5,1}$, and $4\not<_P7=a_{6,1}$, while $T_2 \in \mathcal{T'}_{P,2^11^5}$ with $s=4$ since $3<_P7=b_{4,1}$ and $2<_P7=b_{4,1}$.
\end{example}

We now define a map $\psi_{\ell}:\mathcal{T}_{P,2^{\ell+1}1^{n-2\ell-2}} \rightarrow \mathcal{T}_{P,2^{\ell}1^{n-2\ell}}$. Given $T=[b_{i,j}] \in \mathcal{T}_{P,2^{\ell+1}1^{n-2\ell-2}}$, we let $s$ be the smallest $i>\ell+1$ such that $b_{i,1}\not<_Pb_{\ell+1,2}$; otherwise $s=n-\ell$, then let $\psi_{\ell}(T)$ be the tableau obtained by placing(inserting) $b_{\ell+1,2}$ right below $b_{s-1,1}$. It is easy to see that $\psi_{\ell}(T)$ is a $P$-tableau in $\mathcal{T'}_{P,2^{\ell}1^{n-2\ell}}$. Moreover, we can show that $\psi_{\ell}$ is a weight preserving bijection between $\mathcal{T}_{P,2^{\ell+1}1^{n-2\ell-2}}$ and $\mathcal{T'}_{P,2^{\ell}1^{n-2\ell}}$:

\noindent Define a map $\phi_{\ell}:\mathcal{T'}_{P,2^{\ell}1^{n-2\ell}} \rightarrow \mathcal{T}_{P,2^{\ell+1}1^{n-2\ell-2}}$ as follows. For a given $T=[a_{i,j}]\in\mathcal{T'}_{P,2^{\ell}1^{n-2\ell}}$, we move $a_{s,1}$ to the right side of ${a_{\ell+1,1}}$ to obtain $\phi_\ell(T)$. Note that $a_{\ell+1,1}<_Pa_{s,1}\not<_Pa_{\ell,2}$ since $a_{\ell,1}<_Pa_{\ell,2}$ and $a_{\ell,2}>r$. Moreover, $a_{s-1,1}<r$ gives $a_{s+1,1}\not<_Pa_{s-1,1}$. Thus $\phi_{\ell}(T)$ is a $P$-tableau in $\mathcal{T}_{P,2^{\ell+1}1^{n-2\ell-2}}$ and $\phi_\ell$ is the inverse map of $\psi_\ell$. Further, since $a_{i,1}<_P a_{s,1}$ there is no edge between $a_{i,1}$ and $a_{s,1}$ for all $i\in \{\ell+1,\ell+2,\dots, s-1\}$. Therefore, two $P$-tableaux $T$ and $\phi_{\ell}(T)$ have the same $G$-inversions.\\
 
\begin{example}
Let $P=P(3,4,6,7,7,7)$ be a natural unit interval order. The following figure shows a correspondence between tableaux $T\in\mathcal{T}_{P,2^21^3}$ and $\psi_{1}(T) \in \mathcal{T'}_{P,2^11^5}$.\\

\begin{center}
\begin{ytableau} 
1&5\cr
3&7\cr
2\cr
6\cr
4\cr
\end{ytableau}
\quad\quad $\leftrightarrow$ \quad\quad\begin{ytableau}      
1&5\cr
3\cr
2\cr
7\cr
6\cr
4\cr
\end{ytableau}
\end{center}
$$T\quad\quad\quad\quad\quad\quad\quad\quad \psi_1(T)$$
\end{example}

Let $\widebar{\mathcal{T'}}_{P,2^{\ell}1^{n-2\ell}}$ be the set $\mathcal{T}_{P,2^{\ell}1^{n-2\ell}}-\mathcal{T'}_{P,2^{\ell}1^{n-2\ell}}$. Then
$$B_{2^{\ell}1^{n-2\ell}}(t)-B_{2^{\ell+1}1^{n-2\ell-2}}(t)=\sum_{T\in\widebar{\mathcal{T'}}_{P,2^{\ell}1^{n-2\ell}}}t^{{\rm inv}_{G}(T)},$$
and this gives a combinatorial interpretation of the coefficient of $e_{(n-\ell,\ell)}$ in $X_{G}(\bold{x},t)$ for a natural unit interval order $P=(r,m_2,m_3,\dots,m_r,n,\dots,n)$ of size $n$. \\

We can now state our main result of this section.

\vspace{1mm}
\begin{thm}\label{thm:main} Let $P=P(m_1,m_2,\dots, m_{n-1})$ be a natural unit interval order and let $G$ be the incomparability graph of $P$. If $m_1=r<n$ and $m_{r+1}=n$, then
 $$X_{G}(\bold{x},t)=\sum_{\ell=0}^{k}C_{(n-\ell,\ell)}(t)e_{(n-\ell,\ell)},$$ 
where $k$ is a positive integer and 
$$C_{(n-\ell,\ell)}(t)=\sum_{T\in\widebar{\mathcal{T'}}_{P,2^{\ell}1^{n-2\ell}}}t^{\rm{inv}_G(T)}.$$
Here, $\widebar{\mathcal{T'}}_{P,2^{\ell}1^{n-2\ell}}$ is the subset of $\mathcal{T}_{P,2^{\ell}1^{n-2\ell}}$ whose every element $T=[a_{i,j}]$ has no $s\geq\ell+2$ such that $a_{i,1}<_P a_{s,1}$ for all $i\in \{\ell+1, \ell+2, \dots, s-1\}$.\\
Consequently $X_{G}(\bold{x},t)$ is $e$-positive.
\end{thm}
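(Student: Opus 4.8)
The plan is to start from the Schur expansion $X_G(\bold{x},t)=\sum_{\la}B_{\la}(t)\,s_{\la}$ of Theorem~\ref{thm:Gasha}, where $B_{\la}(t)=\sum_{T\in\mathcal{T}_{P,\la}}t^{{\rm inv}_G(T)}$, and first cut down the shapes that can occur. A direct reading of the bounce-path construction shows that the hypotheses $m_1=r<n$ and $m_{r+1}=n$ make the bounce number of the Catalan path of $P$ equal to $2$, so by Lemma~\ref{lem:chain} we have $B_{\la}(t)=0$ unless $\la_1\le 2$, i.e.\ unless $\la=2^\ell 1^{n-2\ell}$. Writing $k$ for the largest $\ell$ with $B_{2^\ell 1^{n-2\ell}}(t)\neq 0$ and using the Jacobi--Trudi identity in the forms $s_{1^n}=e_n$ and $s_{2^\ell 1^{n-2\ell}}=e_{(n-\ell,\ell)}-e_{(n-\ell+1,\ell-1)}$ for $1\le\ell\le k$, the sum telescopes to
\[
X_G(\bold{x},t)=\sum_{\ell=0}^{k-1}\bigl(B_{2^\ell 1^{n-2\ell}}(t)-B_{2^{\ell+1}1^{n-2\ell-2}}(t)\bigr)e_{(n-\ell,\ell)}+B_{2^k 1^{n-2k}}(t)\,e_{(n-k,k)}.
\]
By Remark~\ref{rem:basic}(2) it then suffices to show that each coefficient appearing here is a polynomial in $t$ with nonnegative coefficients, and to identify it with $C_{(n-\ell,\ell)}(t)$.

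The heart of the matter is to prove that the maps $\psi_{\ell}$ and $\phi_{\ell}$ described above are mutually inverse, ${\rm inv}_G$-preserving bijections between $\mathcal{T}_{P,2^{\ell+1}1^{n-2\ell-2}}$ and $\mathcal{T'}_{P,2^\ell 1^{n-2\ell}}$. I would carry this out in four stages. First, that $\psi_{\ell}(T)$ is a valid $P$-tableau that lies in $\mathcal{T'}$: the only new first-column adjacencies it creates are $(b_{s-1,1},b_{\ell+1,2})$ and $(b_{\ell+1,2},b_{s,1})$, and condition (3) holds for them because $b_{\ell+1,2}>r$ is $P$-maximal and because $b_{s,1}\not<_P b_{\ell+1,2}$ by the choice of $s$, while the condition defining $\mathcal{T'}$ follows from $b_{i,1}<_P b_{\ell+1,2}$ for $\ell+1\le i\le s-1$. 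Second, that $\phi_{\ell}(T)$ is a valid $P$-tableau of shape $2^{\ell+1}1^{n-2\ell-2}$: here one uses that the witness index $s$ for $\mathcal{T'}$ is unique (a smaller witness $s'$ would make $a_{s',1}$ $P$-above $a_{\ell+1,1}$ hence $P$-maximal, yet also $P$-below $a_{s,1}$), that $a_{s,1}$ is $P$-maximal so $a_{s,1}\not<_P a_{\ell,2}$, and that $a_{s-1,1}<_P a_{s,1}$ forces $a_{s-1,1}\le r$ so $a_{s+1,1}\not<_P a_{s-1,1}$. Third, that $\psi_{\ell}$ and $\phi_{\ell}$ invert one another, which reduces to checking that the witness index is returned to the same value on each pass, using condition (3) for the adjacent first-column entries $a_{s,1},a_{s+1,1}$ of $T$. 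Fourth, that ${\rm inv}_G$ is preserved: this is immediate once one notes that $b_{i,1}$ and $b_{\ell+1,2}$ are $P$-comparable for $\ell+1\le i\le s-1$, hence non-adjacent in $G$, so the only pairs whose relative vertical order is altered by $\psi_{\ell}$ do not contribute to ${\rm inv}_G$. The structural fact used throughout is that in $P=P(r,m_2,\dots,m_r,n,\dots,n)$ every order relation runs from the antichain $\{1,\dots,r\}$ to the antichain $\{r+1,\dots,n\}$.

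Granting the bijection, for $0\le\ell\le k-1$ one gets $B_{2^\ell 1^{n-2\ell}}(t)-B_{2^{\ell+1}1^{n-2\ell-2}}(t)=\sum_{T\in\mathcal{T}_{P,2^\ell 1^{n-2\ell}}\setminus\mathcal{T'}_{P,2^\ell 1^{n-2\ell}}}t^{{\rm inv}_G(T)}=C_{(n-\ell,\ell)}(t)$, which is visibly $e$-nonnegative; and for $\ell=k$ the maximality of $k$ makes $\mathcal{T'}_{P,2^k 1^{n-2k}}$ empty (otherwise $\phi_k$ would produce a $P$-tableau of shape $2^{k+1}1^{n-2k-2}$, contradicting $B_{2^{k+1}1^{n-2k-2}}(t)=0$), so $B_{2^k 1^{n-2k}}(t)=C_{(n-k,k)}(t)$ as well. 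Feeding these back into the telescoped identity gives the stated formula, and the $e$-positivity of $X_G(\bold{x},t)$ follows from Remark~\ref{rem:basic}(2). I expect the main obstacle to be the second and third stages above, namely verifying that $\phi_{\ell}$ is well-defined on all of $\mathcal{T'}$ and is a genuine two-sided inverse of $\psi_{\ell}$, since this is where the bipartite structure of $P$ must be invoked repeatedly and where the re-indexing of rows after an insertion or deletion is easiest to get wrong.
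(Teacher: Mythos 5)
Your proposal is correct and follows essentially the same route as the paper: restrict to shapes $2^{\ell}1^{n-2\ell}$ via the bounce number and Lemma~\ref{lem:chain}, telescope the Jacobi--Trudi expansion, and realize the difference $B_{2^{\ell}1^{n-2\ell}}(t)-B_{2^{\ell+1}1^{n-2\ell-2}}(t)$ via the same insertion/extraction bijection $\psi_{\ell}/\phi_{\ell}$ onto $\mathcal{T'}_{P,2^{\ell}1^{n-2\ell}}$, with the same comparability arguments for well-definedness and $\mathrm{inv}_G$-preservation. Your treatment of the top case $\ell=k$ (emptiness of $\mathcal{T'}_{P,2^k1^{n-2k}}$) makes explicit a point the paper leaves implicit, but the argument is otherwise identical.
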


\vspace{1mm}
\begin{example} Let $P=P(\bold{m})$ be a natural unit interval order with $\bold{m}=(2,3,4)$. In this case, $n=4$, $r=2$, $m_3=4$, and the incomparability graph $G$ is the path $1-2-3-4$. There are $14$ $P$-tableaux:\\[2ex]

$T_1$=\begin{ytableau}      
1&3\cr
2&4\cr
\end{ytableau} 
$~T_2$=\begin{ytableau}      
2&4\cr
1&3\cr
\end{ytableau} 
$~T_3$=\begin{ytableau}      
1&4\cr
2\cr
3\cr
\end{ytableau} 
$~T_4$=\begin{ytableau}      
1&4\cr
3\cr
2\cr
\end{ytableau} 
$~T_5$=\begin{ytableau}      
1&3\cr
2\cr
4\cr
\end{ytableau}
$~T_6$=\begin{ytableau}      
2&4\cr
1\cr
3\cr
\end{ytableau} \\[2ex]

$T_7$=\begin{ytableau}      
1\cr
2\cr
3\cr
4\cr
\end{ytableau}
$\quad T_8$=\begin{ytableau}      
2\cr
1\cr
3\cr
4\cr
\end{ytableau}
$\quad T_9$=\begin{ytableau}      
3\cr
2\cr
1\cr
4\cr
\end{ytableau}
$\quad T_{10}$=\begin{ytableau}      
4\cr
3\cr
2\cr
1\cr
\end{ytableau}
$\quad T_{11}$=\begin{ytableau}      
1\cr
2\cr
4\cr
3\cr
\end{ytableau}
$\quad T_{12}$=\begin{ytableau}      
1\cr
4\cr
3\cr
2\cr
\end{ytableau}
$\quad T_{13}$=\begin{ytableau}      
1\cr
3\cr
2\cr
4\cr
\end{ytableau}
$\quad T_{14}$=\begin{ytableau}      
2\cr
1\cr
4\cr
3\cr
\end{ytableau}\\[2ex]

By definition, we have 
$\widebar{\mathcal{T'}}_{P,2^{2}}=\{T_1,T_2\}$,  $\widebar{\mathcal{T'}}_{P,21^{2}}=\{T_3,T_4\}$, and $\widebar{\mathcal{T'}}_{P,1^{4}}=\{T_7,T_8,T_9,T_{10}\}$. Therefore, by Theorem \ref{thm:main},  

\begin{eqnarray*}
X_{G}(\bold{x},t)&=&\left( \sum_{i=1}^{2}t^{{\rm inv}_G(T_i)} \right)e_{(2,2)}+\left( \sum_{i=3}^{4}t^{{\rm inv}_G(T_i)} \right)e_{(3,1)}+\left( \sum_{i=7}^{10}t^{{\rm inv}_G(T_i)} \right)e_{4}\\
&=&(t+t^2)e_{(2,2)}+(t+t^2)e_{(3,1)}+(1+t+t^2+t^3)e_{4}\\
&=&t[2]_te_{(2,2)}+t[2]_te_{(3,1)}+[4]_t
e_4.
\end{eqnarray*}

\vspace{3mm}
More precisely, $\mathcal{T'}_{P,21^{2}}=\{T_5=\psi_{1}(T_1),T_6=\psi_{1}(T_2)\}$ and $\mathcal{T'}_{P,1^{4}}=\{T_{11}=\psi_{0}(T_3),T_{12}=\psi_{0}(T_4),T_{13}=\psi_{0}(T_5),T_{14}=\psi_{0}(T_6)\}$.
\end{example}

\vspace{3mm}
For a fixed $r>1$ and $n$, there are $\binom{n-1}{r-1}-1$ sequences of $(r,m_2, \dots, m_r, n, \dots,n)$ with $m_i\neq i$ for all $i=2,3\dots,r$. Therefore, Theorem \ref{thm:main} covers $2^{n-1}-n$ natural unit interval orders among $C_{n-1}$ natural unit interval orders on $n$ elements having connected incomparability graphs. Note that the natural unit interval orders  $P=P(m_1, m_2,n, \dots, n)$ considered by Shareshian-Wachs as stated in Remark \ref{rem:uni} (2) are $\frac{n^2-n-4}{2}$ special cases of the ones in Theorem \ref{thm:main}.\\


Next two corollaries follow easily from Theorem \ref{thm:main}.

\begin{cor} \label{cor:graph}
For $G={\rm inc}(P)$ of some natural unit interval order $P$ on $[n]$, if induced subgraphs on $\{1,2,\dots,r\}$ and $\{r+1,r+2, \dots,n\}$ are complete graphs for some $r$, then $X_{G}(\bold{x},t)$ is $e$-positive. 
\end{cor}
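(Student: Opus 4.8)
The plan is to deduce Corollary~\ref{cor:graph} directly from Theorem~\ref{thm:main} by translating the graph-theoretic hypothesis back into the combinatorial hypothesis on the defining sequence $\bold{m}$. First I would recall that if $G={\rm inc}(P)$ for the natural unit interval order $P=P(m_1,m_2,\dots,m_{n-1})$, then $\{i,j\}$ with $i<j$ is an edge of $G$ if and only if $j\leq m_i$; in particular the induced subgraph of $G$ on an interval $\{a,a+1,\dots,b\}$ is complete exactly when every pair in that interval is incomparable, i.e.\ when $m_i\geq b$ for all $i$ with $a\leq i<b$. Applying this with the two intervals $\{1,2,\dots,r\}$ and $\{r+1,\dots,n\}$ from the hypothesis gives $m_i\geq r$ for all $i\leq r-1$ and $m_i\geq n$ for all $i$ with $r+1\leq i\leq n-1$; combined with $m_i\geq i$ and the monotonicity $m_1\leq\cdots\leq m_{n-1}$, this forces $m_1=r$ (since $m_1\geq r$ and $m_1\leq m_2\leq\cdots$ with the complete-graph condition, together with $m_1\le m_{r+1}$ being compatible; more precisely $m_1\ge r$ from completeness and one checks $m_1=r$ is the only value consistent with primality/connectedness, or simply: the hypothesis of Theorem~\ref{thm:main} only needs $m_1=r<n$ and $m_{r+1}=n$, so I just need to extract these two facts). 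Indeed the completeness on $\{r+1,\dots,n\}$ gives $m_{r+1}\geq n$, hence $m_{r+1}=n$ since $m_{r+1}\leq n$ by definition.

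The one genuine point is to see that $m_1=r$. From completeness of the induced subgraph on $\{1,\dots,r\}$ we get $m_i\geq r$ for $1\leq i\leq r-1$, so in particular $m_1\geq r$. If $m_1>r$, say $m_1=r'>r$, then $\{1,r+1\}$ would be an edge (since $r+1\leq r'=m_1$), contradicting the fact that $r+1\in\{r+1,\dots,n\}$ forms, together with $1\in\{1,\dots,r\}$, a non-edge is \emph{not} required — so instead I argue via the structure: the vertex $r+1$ is the smallest vertex of the second complete block, and if $1<_P r+1$ fails while $\{1,r+1\}\notin E(G)$ we would need $1$ comparable to $r+1$, i.e.\ $r+1>m_1$, giving $m_1\leq r$; combined with $m_1\geq r$ this yields $m_1=r$. (If instead $1$ and $r+1$ were incomparable, the union $\{1,\dots,r+1\}$ would induce a larger complete subgraph, which is not excluded by the hypothesis but then we may simply enlarge $r$ to $r+1$ and repeat; choosing $r$ maximal with the stated property makes the two blocks "touch" and forces $m_1=r$.) In any case, after possibly replacing $r$ by the largest integer for which the hypothesis holds, we obtain $m_1=r<n$ and $m_{r+1}=n$.

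With $m_1=r<n$ and $m_{r+1}=n$ established, Theorem~\ref{thm:main} applies verbatim to $P$ and its incomparability graph $G$, giving the $e$-basis expansion
$$X_G(\bold{x},t)=\sum_{\ell=0}^{k}C_{(n-\ell,\ell)}(t)\,e_{(n-\ell,\ell)}$$
with $C_{(n-\ell,\ell)}(t)=\sum_{T\in\widebar{\mathcal{T'}}_{P,2^{\ell}1^{n-2\ell}}}t^{{\rm inv}_G(T)}$ a polynomial with nonnegative coefficients, whence $X_G(\bold{x},t)$ is $e$-positive. The main (and only) obstacle is the bookkeeping in the previous paragraph: making sure the completeness hypothesis on the two vertex blocks really does pin down $m_1$ and $m_{r+1}$ exactly, rather than merely bounding them, and handling the degenerate/overlap cases by choosing $r$ maximal. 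Everything else is an immediate invocation of Theorem~\ref{thm:main}. I would also remark, for completeness, that the disjoint-union factorization $X_{G}=\prod_i X_{G_i}$ reduces the general statement to the connected case, so there is no loss in assuming $G$ connected (equivalently $m_i>i$ for all $i$) throughout.
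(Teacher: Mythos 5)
Your proof is correct and follows the paper's (implicit) route: the corollary is stated there as an immediate consequence of Theorem~\ref{thm:main}, and your reduction --- extracting $m_{r+1}=n$ from completeness of the second block and pinning down $m_1=r$ by taking $r$ maximal (equivalently, replacing the given $r$ by $m_1$, since monotonicity of the $m_i$ then makes both enlarged blocks complete) --- is exactly the bookkeeping needed to invoke that theorem. The only degenerate case, $m_1=n$, gives the complete graph with $X_{K_n}(\bold{x},t)=[n]_t!\,e_n$, which is trivially $e$-positive.
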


Corollary \ref{cor:graph} is a quasi-refinement of Stanley-Stembridge's result stated in Remark \ref{rem:uni} (5).\\

Recall the graph $G_{n,r}$ in Example \ref{ex:stair}. The graph $G_{n,1}$ is the trivial graph and the graph $G_{n,n}$ is the complete graph. Hence, $X_{G_{n,r}}$ is $e$-positive when $r=1$ and $r=n$. Shareshian and Wachs proved that $X_{G_{n,r}}(\bold{x},t)$ is $e$-positive for $r=2,n-2,n-1$. The following corollary implies that $G_{n,r}$ is $e$-positive for all $r\geq  \lceil \frac{n}{2} \rceil$.

\begin{cor}
For a natural unit interval order $P_{n,r}=P(\bold{m})$ with $n$ elements and $\bold{m}=(r,r+1,r+2,\dots,n, \dots,n)$, let $G_{n,r}$ be the incomparability graph of $P_{n,r}$. If $m_1=r \geq \lceil \frac{n}{2} \rceil$, then $m_{r+1}=n$ so that $X_{G_{n,r}}(\bold{x},t)$ is $e$-positive.
\end{cor}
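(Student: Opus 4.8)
The plan is to derive this corollary directly from Theorem~\ref{thm:main}, after reading off the relevant entries of $\mathbf{m}$ from the explicit description of $P_{n,r}$. Recall from Example~\ref{ex:stair} that $P_{n,r}=P(\mathbf{m})$ with $m_i=\min(r+i-1,n)$ for $1\le i\le n-1$. In particular $m_1=r$, and whenever $r\le n-2$ the index $r+1$ lies in $\{1,\dots,n-1\}$, so $m_{r+1}$ is a genuine entry of $\mathbf{m}$ and $m_{r+1}=\min(2r,n)$. Since $r\ge\lceil n/2\rceil$ forces $2r\ge n$ (for $n$ even, $2r\ge n$; for $n$ odd, $2r\ge n+1$), we conclude $m_{r+1}=n$. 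This is the first assertion of the corollary.

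Next, for every $r$ with $\lceil n/2\rceil\le r\le n-2$ we then have $m_1=r<n$ and $m_{r+1}=n$, which are exactly the hypotheses of Theorem~\ref{thm:main}. Applying that theorem verbatim to $P_{n,r}$ gives $X_{G_{n,r}}(\mathbf{x},t)=\sum_{\ell=0}^{k}C_{(n-\ell,\ell)}(t)\,e_{(n-\ell,\ell)}$ with $C_{(n-\ell,\ell)}(t)=\sum_{T\in\widebar{\mathcal{T'}}_{P,2^{\ell}1^{n-2\ell}}}t^{{\rm inv}_{G_{n,r}}(T)}$, a polynomial with nonnegative coefficients; hence $X_{G_{n,r}}(\mathbf{x},t)$ is $e$-positive.

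It remains to handle the two boundary values $r=n-1$ and $r=n$ (these were already established in the literature, but I would record how they fit the present framework). For $r=n$ the graph $G_{n,n}$ is the complete graph $K_n$, and by Theorem~\ref{thm:en} (together with Lemma~\ref{lem:chain}, whose bounce number here is $1$) we have $X_{G_{n,n}}(\mathbf{x},t)=C_n(t)\,e_n$ with $C_n(t)$ a positive, unimodal polynomial, so it is $e$-positive. For $r=n-1$ the graph $G_{n,n-1}$ is $K_n$ with the single edge $\{1,n\}$ deleted, so its induced subgraphs on $\{1,\dots,n-1\}$ and on $\{n\}$ are both complete, and Corollary~\ref{cor:graph} yields the $e$-positivity of $X_{G_{n,n-1}}(\mathbf{x},t)$. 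Combining the three cases proves the corollary. I do not expect any genuine obstacle: the substantive content is entirely carried by Theorem~\ref{thm:main}, and the proof reduces to the elementary inequality $2r\ge n$ and the bookkeeping that $m_{r+1}$ is an actual entry of $\mathbf{m}$ precisely when $r\le n-2$.
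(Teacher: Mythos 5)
Your proposal is correct and follows the same route the paper intends: the paper states that this corollary ``follows easily from Theorem~\ref{thm:main},'' and your argument simply fills in the arithmetic $m_{r+1}=\min(2r,n)=n$ when $2r\geq n$ and then invokes that theorem. Your explicit treatment of the boundary values $r=n-1$ and $r=n$ (via Corollary~\ref{cor:graph} and the complete graph, respectively) matches the paper's surrounding discussion, which disposes of those cases before stating the corollary.
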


\subsection{The second $e$-positive class} \label{sec:secondmain}

In this section, we consider the natural unit interval orders $P=P(r,n-1,m_3,\dots,m_{n-2},n)$ for some $r>1$. Note that there must be some $3\leq s \leq n-2$ such that $m_s=n-1$ and $m_{s+1}=n$. Let $G$ be the incomparability graph of $P$. Then the induced subgraph of $G$ on $\{2,3,\dots,n-1\}$ is a complete graph. If $s \leq r$, then $m_{r+1}=n$ and $X_{G}(\bold{x},t)$ is $e$-positive by Theorem \ref{thm:main}. Hence, we consider the case when $s>r$ and $n\geq 5$. The first figure in Figure \ref{Fig4} shows a corresponding Catalan path (solid line) of $P$. The dotted line indicates the bounce path and we can see that the bounce number is three for the unit interval order we are considering. The second figure in Figure \ref{Fig4} shows an example of $G={\rm inc}(P)$ with $P=(4,8,8,8,8,8,9,9)$.

\begin{figure}[h]
\centering {\includegraphics[height=4.3cm]{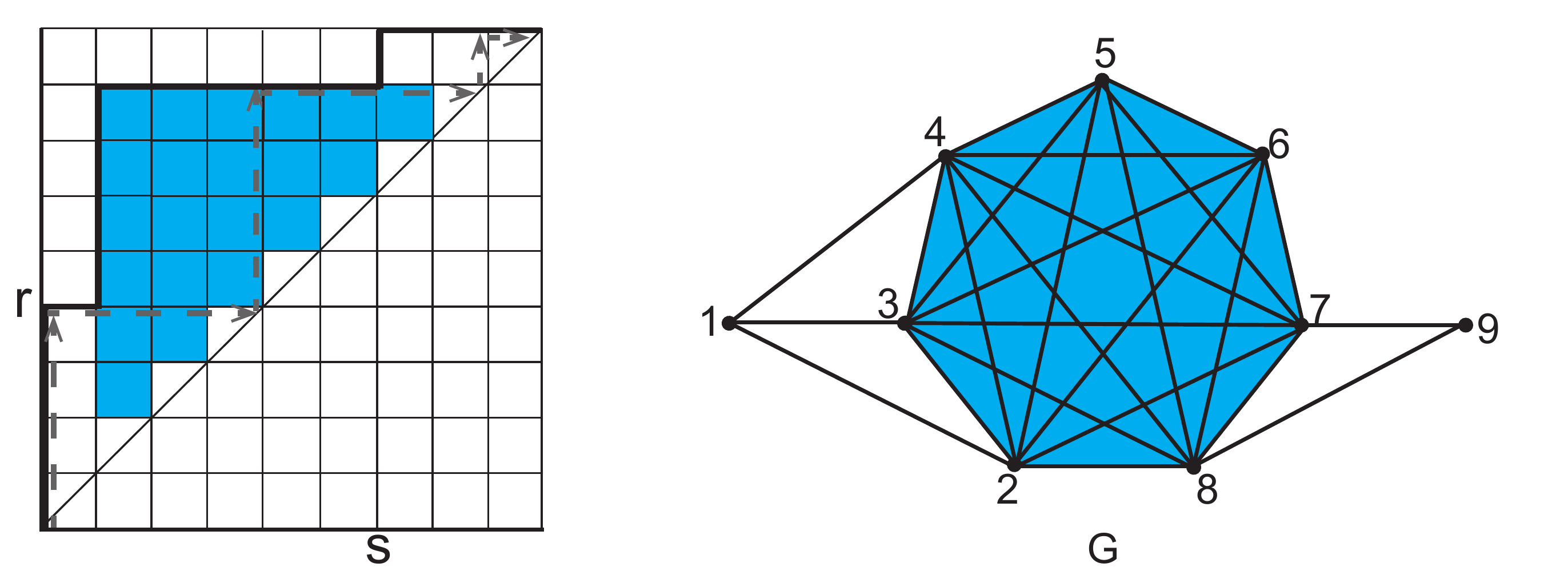} }
\caption{A Catalan path and the bounce path of $P(r,n-1, \dots,n-1, m_{r+2}, \dots, m_{n-2}, n)$}\label{Fig4}
\end{figure}

\noindent We know, by Lemma \ref{lem:chain}, there is no $P$-tableau of shape $\la$ if $\la_1>3$. Since the complete graph on $\{2,3,\dots,n-1\}$ is a subgraph of the incomparability graph of $P$, $a<_Pb$ means that $a=1$ or $b=n$. Therefore, if $T$ is a $P$-tableau, then $T$ must have one of four shapes, $3^11^{n-3}$, $2^21^{n-4}$, $2^11^{n-2}$, and $1^n$. It is also easy to see that there is no $P$-tableau of shape $2^21^{n-4}$ having $1$ and $n$ in the same row. By using a similar method we used in Section \ref{sec:firstmain}, we can prove the following theorem.

\begin{thm} \label{thm:second}
Let $P=P(r,m_2,m_3,\dots,m_{n-1})$ be a natural unit interval order with $m_{s}=n-1$ and $m_{s+1}$=n for some $r<s<n-1$, then
$$X_{G}(\bold{x},t)=\sum_{\la \in A}B_{\la}(t)s_{\la}=\sum_{\mu \in A'} C_{\mu}(t)e_{\mu},$$
where $A=\{1^n, 2^11^{n-2}, 2^21^{n-4}, 3^11^{n-3}\}$ and $A'=\{(n-2,1,1), (n-2,2), (n-1,1), n\}$. Moreover, the coefficients of the $e$-expansion
\begin{eqnarray*}
C_{(n-2,1,1)}(t)&=& B_{3^11^{n-3}}(t),\\
C_{(n-2,2)}(t)&=& B_{2^21^{n-4}}(t)-B_{3^11^{n-3}}(t),\\
C_{(n-1,1)}(t)&=&B_{2^11^{n-2}}(t)-B_{2^21^{n-4}}(t)-B_{3^11^{n-3}}(t), \mbox{ and}\\
C_{n}(t)&=&B_{1^n}(t)-B_{2^11^{n-2}}(t)+B_{3^11^{n-3}}(t)
\end{eqnarray*}
are polynomials in $t$ with nonnegative coefficients. Consequently $X_{G}(\bold{x},t)$ is $e$-positive.
\end{thm}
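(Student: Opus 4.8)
The plan is to mimic the argument of Section~\ref{sec:firstmain}, now tracking \emph{three} non-vanishing Schur coefficients $B_{3^11^{n-3}}(t)$, $B_{2^21^{n-4}}(t)$, $B_{2^11^{n-2}}(t)$ (and $B_{1^n}(t)=C_n(t)$, whose positivity and unimodality are already given by Theorem~\ref{thm:en}). First I would record the Jacobi--Trudi expansions of the four relevant Schur functions. Writing $\la'$ for the conjugate, one has $s_{1^n}=e_n$, $s_{2^11^{n-2}}=e_{(n-1,1)}-e_n$, $s_{2^21^{n-4}}=e_{(n-2,2)}-e_{(n-1,1)}$ (exactly as in Section~\ref{sec:firstmain}), and the new one
\[
s_{3^11^{n-3}}=e_{(n-2,1,1)}-e_{(n-1,1)}+e_n\,.
\]
Substituting these into $X_G(\bold{x},t)=\sum_{\la\in A}B_\la(t)s_\la$ and collecting coefficients of $e_{(n-2,1,1)}$, $e_{(n-2,2)}$, $e_{(n-1,1)}$, $e_n$ yields precisely the four displayed formulas for the $C_\mu(t)$; this is a routine linear-algebra computation. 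So the theorem reduces to showing that each of
\[
B_{3^11^{n-3}},\quad B_{2^21^{n-4}}-B_{3^11^{n-3}},\quad B_{2^11^{n-2}}-B_{2^21^{n-4}}-B_{3^11^{n-3}},\quad B_{1^n}-B_{2^11^{n-2}}+B_{3^11^{n-3}}
\]
is a polynomial in $t$ with nonnegative coefficients, where by Theorem~\ref{thm:Gasha} each $B_\la(t)=\sum_{T\in\mathcal T_{P,\la}}t^{\mathrm{inv}_G(T)}$.

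The positivity of $B_{3^11^{n-3}}$ is immediate. For the three differences I would construct, for each, a weight-preserving (i.e.\ $\mathrm{inv}_G$-preserving) injection of $P$-tableau sets, exactly as $\phi_\ell$/$\psi_\ell$ in Section~\ref{sec:firstmain}. Concretely: (a) an injection $\mathcal T_{P,3^11^{n-3}}\hookrightarrow\mathcal T_{P,2^21^{n-4}}$ obtained by moving the entry in the third column (which must be $n$, since $a<_Pb$ forces $a=1$ or $b=n$, and a three-element chain in row $1$ must be $1<_P b<_P n$) down to start a new second-column cell below an appropriate row of column~$1$; the image is the subset of $\mathcal T_{P,2^21^{n-4}}$ isolated by the same kind of ``first violating row'' condition used for $\mathcal T'$ in Section~\ref{sec:firstmain}, and the complement set gives a manifestly positive combinatorial model for $C_{(n-2,2)}(t)$. (b) An injection $\mathcal T_{P,2^21^{n-4}}\hookrightarrow\mathcal T_{P,2^11^{n-2}}$ by the same mechanism as $\psi_\ell$ (pull the second-column entry of the second row into column~$1$), landing in a distinguished subset $\mathcal T'_{P,2^11^{n-2}}$; one must check this subset is \emph{disjoint} from the image of $\mathcal T_{P,3^11^{n-3}}$ under the composition of (a) with the Section~\ref{sec:firstmain} map, so that $B_{2^11^{n-2}}-B_{2^21^{n-4}}-B_{3^11^{n-3}}$ counts $\mathcal T_{P,2^11^{n-2}}$ minus two disjoint subsets and is thus $t$-positive. (c) For $C_n(t)=B_{1^n}-B_{2^11^{n-2}}+B_{3^11^{n-3}}$ one again uses the $\psi_0$-type injection $\mathcal T_{P,2^11^{n-2}}\hookrightarrow\mathcal T_{P,1^n}$ from Section~\ref{sec:firstmain}, showing that the tableaux \emph{not} in its image are in weight-preserving bijection with $\mathcal T_{P,3^11^{n-3}}$ via a ``both violations occur'' analysis; equivalently, invoke Theorem~\ref{thm:en} directly, which already gives $C_n(t)=[n]_t\prod_{i=2}^n[b_i]_t$, positive and unimodal.

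The heart of the argument, and the main obstacle, is step~(b): verifying that the distinguished subset $\mathcal T'_{P,2^11^{n-2}}$ receiving the image of $\mathcal T_{P,2^21^{n-4}}$ is genuinely disjoint from the subset receiving (the composite image of) $\mathcal T_{P,3^11^{n-3}}$. Both constructions insert a former ``column-$\geq 2$'' entry into column~$1$, so I must pin down, from the poset relations $m_s=n-1$, $m_{s+1}=n$, $r<s<n-1$, where the inserted entries can land and argue that the two families of insertion sites are separated --- the key structural fact being that the only strict relations in $P$ are $1<_P b$ for $b>m_1=r$ and $a<_P n$ for $a\le s$, together with $1<_P n$, which forces the three-chain tableaux to have shape determined rigidly by the pair $(1,n)$. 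Once the disjointness and the $\mathrm{inv}_G$-preservation (which follows, as in Section~\ref{sec:firstmain}, because the moved entry is $\le_P$-related to every column-$1$ entry it passes, hence shares no edge with them) are established, assembling the four positivity statements and reading off the $e$-expansion is formal. I would close by noting that, as in Corollary~\ref{cor:graph}, the complement sets $\widebar{\mathcal T'}$ furnish explicit positive combinatorial models for all four coefficients $C_\mu(t)$.
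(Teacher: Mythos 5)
Your overall strategy (Jacobi--Trudi plus weight-preserving injections between sets of $P$-tableaux) is the same as the paper's, but there are two genuine problems. First, your Jacobi--Trudi expansion of $s_{3^11^{n-3}}$ is wrong. Since $(3,1^{n-3})'=(n-2,1,1)$, the relevant determinant is $3\times 3$, namely $\det\bigl[\begin{smallmatrix} e_{n-2} & e_{n-1} & e_{n}\\ e_0 & e_1 & e_2 \\ e_{-1} & e_0 & e_1 \end{smallmatrix}\bigr] = e_{(n-2,1,1)}-e_{(n-2,2)}-e_{(n-1,1)}+e_n$; you dropped the term $-e_{(n-2,2)}$. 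With your expansion the coefficient of $e_{(n-2,2)}$ would come out as $B_{2^21^{n-4}}(t)$ rather than $B_{2^21^{n-4}}(t)-B_{3^11^{n-3}}(t)$, so your ``routine linear-algebra computation'' does not in fact yield the four displayed formulas, and the need for the injection in your step (a) would never arise. (With the corrected determinant everything lines up, and step (a) --- an $\mathrm{inv}_G$-preserving injection $\mathcal T_{P,3^11^{n-3}}\hookrightarrow\mathcal T_{P,2^21^{n-4}}$ --- is exactly what the paper does, though it requires a two-case definition depending on whether $a_{2,1}<_P n$, which you should spell out.)

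Second, your plan for $C_{(n-1,1)}(t)$ cannot work as stated. You propose to inject $\mathcal T_{P,3^11^{n-3}}$ into $\mathcal T_{P,2^11^{n-2}}$ by \emph{composing} the map of step (a) with the insertion map $\mathcal T_{P,2^21^{n-4}}\hookrightarrow\mathcal T_{P,2^11^{n-2}}$, and then to argue the two images are disjoint. But a composite that factors through $\mathcal T_{P,2^21^{n-4}}$ necessarily has image \emph{contained in} the image of $\mathcal T_{P,2^21^{n-4}}$, so the two images cannot be disjoint, and you cannot conclude $B_{2^11^{n-2}}\geq B_{2^21^{n-4}}+B_{3^11^{n-3}}$ this way; you would be double-counting. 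What is needed (and what the paper does) is a \emph{direct} injection of $\mathcal T_{P,3^11^{n-3}}$ into the set $F$ of shape-$2^11^{n-2}$ tableaux with first row $(1,n)$ (send $1,a_{1,2},n$ in the top row to first row $(1,n)$ with $a_{1,2}$ pushed into column $1$), together with the observation that the insertion map on $\mathcal T_{P,2^21^{n-4}}$ lands in the complement of $F$, because no $P$-tableau of shape $2^21^{n-4}$ has $1$ and $n$ in the same row. Your instinct that disjointness is the heart of the matter is right, but the separating criterion must be ``first row equals $(1,n)$,'' and the three-row shapes must be routed into $\mathcal T_{P,2^11^{n-2}}$ without passing through $\mathcal T_{P,2^21^{n-4}}$. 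Your treatment of $C_n(t)$ (either the $E$ versus complement-of-$E$ splitting or a direct appeal to Theorem~\ref{thm:en}) is fine, and you should also record explicitly why no shapes outside $A$ occur (at most two disjoint relations $a<_P b$ exist since each forces $a=1$ or $b=n$).
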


\begin{proof}
Since any $B_{\la}(t)$ is a polynomial with nonnegative coefficients, $C_{(n-2,1,1)}(t)$ also has only nonnegative coefficients.\\ 
\indent To show that $C_{n}(t)=B_{1^n}(t)-B_{2^11^{n-2}}(t)+B_{3^11^{n-3}}(t)$ is a nonnegative polynomial, we let $E=\{~T=[a_{i,j}]\in \mathcal{T}_{P,2^11^{n-2}}~|~a_{1,1}<_P a_{2,1}<_P a_{1,2}~\}$ be a subset of $\mathcal{T}_{P,2^11^{n-2}}$ and construct two weight(inversion) preserving injections; one is from $E$ to $\mathcal{T}_{P,3^11^{n-3}}$ and the other is from $\mathcal{T}_{P,2^11^{n-2}}-E$ to $\mathcal{T}_{P,1^n}$. Note that if $a_{1,1}<_Pa_{2,1}<_Pa_{1,2}$ in a $P$-tableau $T=[a_{i,j}]\in \mathcal{T}_{P,2^11^{n-2}}$, then $a_{1,1}=1$ and $a_{1,2}=n$. We hence define an injection from $E$ to $\mathcal{T}_{P,3^11^{n-3}}$ as it is depicted in the following figure;

\begin{center}
\begin{ytableau}      
1&n\cr
a_{2,1}\cr
\vdots\cr
\cr
\cr
\cr
\end{ytableau}
\quad\quad $\mapsto$ \quad\quad\begin{ytableau} 
1&a_{2,1}&n\cr
\vdots\cr
\cr
\cr
\cr
\end{ytableau}
\end{center}

\noindent Now we define an injection from $\mathcal{T}_{P,2^11^{n-2}}-E$ to $\mathcal{T}_{P,1^n}$. For an element $T=[a_{i,j}]$  of $\mathcal{T}_{P,2^11^{n-2}}-E$, we let $s$ be the smallest $i\geq2$ such that $a_{i,1}\not<_Pa_{1,2}$; otherwise $s=n$, and then we place $a_{1,2}$ right below $a_{s-1,1}$ to obtain a $P$-tableau of shape $1^n$:

\begin{center}
\begin{ytableau}      
a_{1,1}&a_{1,2}\cr
\vdots\cr
a_{s,1}\cr
\vdots\cr
\cr
\cr
\end{ytableau}
\quad\quad $\mapsto$ \quad\quad\begin{ytableau} 
a_{1,1}\cr
\vdots\cr
a_{1,2}\cr
a_{s,1}\cr
\vdots\cr
\cr
\cr
\end{ytableau}
\end{center}
We can easily check that the resulting tableaux are all distinct and inversions are preserved in both injections.

Similarly, to show that $C_{(n-1,1)}(t)=B_{2^11^{n-2}}(t)-B_{2^21^{n-4}}(t)-B_{3^11^{n-3}}(t)$ is a nonnegative polynomial, we let $F=\{~T=[b_{i,j}]\in \mathcal{T}_{P,2^11^{n-2}}~|~b_{1,1}=1 \text{~and~} b_{1,2}=n~\}$ be a subset of $\mathcal{T}_{P,2^11^{n-2}}$ and construct two weight(inversion) preserving injections; one is from $\mathcal{T}_{P,3^11^{n-3}}$ to $F$ and the other is from  $\mathcal{T}_{P,2^21^{n-4}}$ to $ \mathcal{T}_{P,2^11^{n-2}}-F$. We define an injection from $\mathcal{T}_{P,3^11^{n-3}}$ to $F$ as it is depicted in the following figure;

\begin{center}
\begin{ytableau}      
1&a_{1,2}&n\cr
\vdots\cr
\cr
\cr
\cr
\end{ytableau}
\quad\quad $\mapsto$ \quad\quad\begin{ytableau} 
1&n\cr
a_{1,2}\cr
\vdots\cr
\cr
\cr
\cr
\end{ytableau}
\end{center}
Now we define an injection from $\mathcal{T}_{P,2^21^{n-4}}$ to $ \mathcal{T}_{P,2^11^{n-2}}-F$. For an element $T=[a_{i,j}]$  of $\mathcal{T}_{P,2^21^{n-4}}$, let $s$ be the smallest $i\geq3$ such that $a_{i,1}\not<a_{2,2}$; otherwise $s=n-1$, and then we place $a_{2,2}$ right below $a_{s-1,1}$ to obtain a $P$-tableau of shape $2^11^{n-2}$:

\begin{center}
\begin{ytableau}      
a_{1,1}&a_{1,2}\cr
a_{2,1}&a_{2,2}\cr
\vdots\cr
a_{s,1}\cr
\vdots\cr
\end{ytableau}
\quad\quad $\mapsto$ \quad\quad\begin{ytableau} 
a_{1,1}&a_{1,2}\cr
a_{2,1}\cr
\vdots\cr
a_{2,2}\cr
a_{s,1}\cr
\vdots\cr
\end{ytableau}
\end{center}
Since there is no $P$-tableau of shape $2^21^{n-4}$ having $1$ and $n$ in the same row,
$(a,b)\neq (1,n)$, so the resultant $P$-tableau is in the set $\mathcal{T}_{P,2^11^{n-2}}-F$. In both injections, the resulting tableaux are all distinct and inversions are preserved.\\

Lastly, for $C_{(n-2,2)}(t)$, we define an injection from $\mathcal{T}_{P,3^11^{n-3}}$ to $\mathcal{T}_{P,2^21^{n-4}}$. Let $T=[a_{i,j}]$ be an element of the set $\mathcal{T}_{P,3^11^{n-3}}$. If $a_{2,1}<_Pn$, then the injection is described as in the following left figure. On the other hand, if $a_{2,1}\not<_Pn$, then the injection is described as the following right figure:

\begin{center}
\begin{ytableau}
1&a_{1,2}&n\cr
a_{2,1}\cr
\vdots\cr
\cr
\cr
\end{ytableau}
\quad\quad $\mapsto$ \quad\quad\begin{ytableau} 
1&a_{1,2}\cr
a_{2,1}&n\cr
\vdots\cr
\cr
\cr
\end{ytableau}
\quad\quad and \quad\quad
\begin{ytableau}
1&a_{1,2}&n\cr
a_{2,1}\cr
\vdots\cr
\cr
\cr
\end{ytableau}
\quad\quad $\mapsto$ \quad\quad\begin{ytableau} 
1&a_{2,1}\cr
a_{1,2}&n\cr
\vdots\cr
\cr
\cr
\end{ytableau}
\end{center} 
Note that $1<_Pa$ and $b<_Pn$ for only $a>r$ and $b\leq s$. Hence, as in the second injection, if  $1<_Pa_{1,2}<_P n$ and $a_{2,1}\not<_Pn$, then $r<a_{1,2}\leq s<a_{2,1}$, and then $1<_Pa_{2,1}$. Since $n$ is the largest element in $P$, $a_{2,1}\not>_P n$. Therefore, the resultant of the second injection is a $P$-tableau. Moreover, the second injection preserves the weight since $a_{1,2}\not<_Pa_{2,1}$ and $a_{1,2}<a_{2,1}$.  
\end{proof}

\vspace{3mm}
For a fixed $n$, Theorem \ref{thm:second} covers $(n-3)(n-4)/2$ more natural unit interval orders on $n$ elements having connected incomparability graphs.

\begin{cor}
Let $G$ be the incomparability graph of a natural unit interval order on $[n]$. If $G$ has the complete graph on $\{2,3, \dots,n-1\}$ as a subgraph, then $X_{G}(\bold{x},t)$ is $e$-positive. 
\end{cor}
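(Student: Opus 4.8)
The plan is to reduce the stated graph-theoretic hypothesis to the poset-theoretic hypothesis of Theorem~\ref{thm:second} (together with that of Theorem~\ref{thm:main}), and then invoke those theorems. First I would recall, as noted in the excerpt, that it suffices to treat natural unit interval orders whose incomparability graph $G$ is connected, since $X_G(\bold{x},t)$ factors over connected components and a product of $e$-positive symmetric functions is $e$-positive. So assume $G={\rm inc}(P)$ is connected, $P=P(m_1,m_2,\dots,m_{n-1})$, and that the complete graph on $\{2,3,\dots,n-1\}$ is a subgraph of $G$. The edge $\{2,n-1\}\in E(G)$ forces $n-1\le m_2$, and since $m_2\le n$ we get $m_2\in\{n-1,n\}$. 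The task is to show that in either case $P$ is one of the orders handled by Theorem~\ref{thm:main} or Theorem~\ref{thm:second}.

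Next I would split into the two cases. If $m_2=n$, then since the sequence $(m_i)$ is nondecreasing we have $m_i=n$ for all $i\ge 2$; writing $r=m_1$ (with $1<r<n$ by connectedness, i.e.\ $m_1>1$ and $m_1<n$), we see $m_{r+1}=n$ and $m_1=r<n$, so Theorem~\ref{thm:main} applies directly and $X_G(\bold{x},t)$ is $e$-positive. If instead $m_2=n-1$, set $r=m_1$; again connectedness gives $1<r<n$. Let $s$ be the largest index with $m_s=n-1$; since $m_2=n-1$ we have $s\ge 2$, and $s\le n-1$ because $m_{n-1}$ may equal $n-1$ or $n$. If $m_s=n-1$ with $s=n-1$, or more generally if $m_{r+1}=n-1$ fails to be forced, I distinguish further: when $s\le r$ we have $m_{r+1}=n$ (as $m_{r+1}\ge m_{s+1}=n$ when $s<n-1$, and the case $s=n-1\le r$ means $m_i\le n-1$ for all $i\le n-1$, still compatible with $m_{r+1}=n$ only if... — here I would check that $s\le r$ indeed yields $m_{r+1}=n$), so Theorem~\ref{thm:main} applies; when $r<s<n-1$, the hypotheses $m_s=n-1$ and $m_{s+1}=n$ of Theorem~\ref{thm:second} hold verbatim, so that theorem gives $e$-positivity; and when $r<s=n-1$, the graph is $G_{n,n-1}$-like on the top block but I would observe $m_1=r$ and the induced subgraph on $\{2,\dots,n-1\}$ is complete while $n$ is isolated-from-below only via $m_{n-1}=n-1$, contradicting connectedness unless handled separately — in fact this sub-case needs care and I treat it below.

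The remaining loose end is the boundary sub-case $m_2=n-1$ with $m_{n-1}=n-1$ (so $s=n-1$). Here connectedness of $G$ requires $m_i>i$ for all $i\le n-1$ (Lemma on connectedness), which holds automatically for $i\le n-2$ since $m_i\ge m_2=n-1>i$ when... wait, this requires $n-1>i$, true for $i\le n-2$; and for $i=n-1$ it requires $m_{n-1}>n-1$, i.e.\ $m_{n-1}=n$, \emph{contradicting} $m_{n-1}=n-1$. Hence this sub-case does not occur for connected $G$, and I can discard it. Thus every connected $G$ satisfying the hypothesis falls under Theorem~\ref{thm:main} (when $m_2=n$, or $m_2=n-1$ with $s\le r$) or Theorem~\ref{thm:second} (when $m_2=n-1$ with $r<s<n-1$), and in all cases $X_G(\bold{x},t)$ is $e$-positive, completing the proof. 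The main obstacle, and the step requiring the most attention, is the careful bookkeeping in the case $m_2=n-1$: correctly defining $s$, verifying that $s\le r$ forces $m_{r+1}=n$ (so Theorem~\ref{thm:main} applies), and confirming via the connectedness lemma that the degenerate sub-case $s=n-1$ is vacuous.
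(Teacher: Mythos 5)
Your proposal is correct and takes essentially the same route as the paper: the corollary is precisely the union of the cases covered by Theorem~\ref{thm:main} (when $s\le r$, which forces $m_{r+1}=n$) and Theorem~\ref{thm:second} (when $r<s<n-1$), and the paper performs this same reduction at the start of Section~\ref{sec:secondmain}. Your closing observation that the sub-case $s=n-1$ makes vertex $n$ isolated, so it disappears after reducing to connected components, correctly disposes of the only remaining boundary case.
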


\vspace{3mm}
\section{Some explicit formulae} \label{sec:last}

In this section we prove the $e$-unimodality conjecture of $X_G({\bf x}, t)$ for two subclasses of the class of natural unit interval orders that we considered in Section \ref{sec:firstmain}. We use the combinatorial model we obtained in Theorem \ref{thm:main} to find explicit formulae of the coefficients $C_{(n-\ell, \ell)}(t)$'s, which shows that they are unimodal with center of symmetry $\frac{|E(G)|}{2}$. By Remark \ref{rem:uni}, this show the $e$-unimodality of $X_{G}(\bold{x},t)$.

\subsection{The first explicit formula} \label{sec:firstform}

In this subsection we consider natural unit interval orders $P=P(m_1,m_2 \dots, m_{n-1})$ with $m_1=\cdots=m_{s}=r$ and $m_{s+1}=\cdots=m_{n-1}=n$ for some positive integers $s\leq r\leq n-1$. Figure \ref{Fig5} shows a corresponding Catalan path of $P$. They form a subclass of the natural unit interval orders we considered in Section~\ref{sec:firstmain} and we know that the chromatic quasisymmetric functions of them are $e$-positive.

\begin{figure}[h]
\centering {\includegraphics[height=4.3cm]{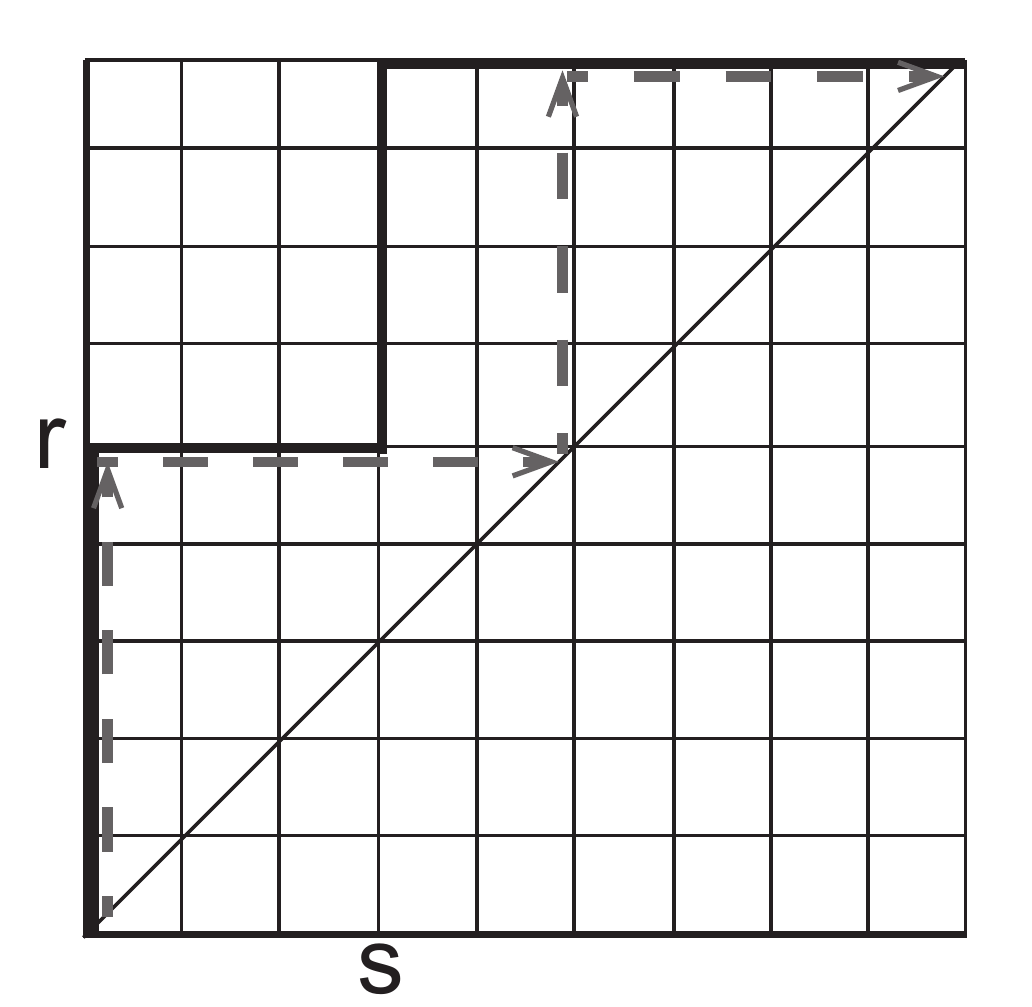} }
\caption{A Catalan path and the bounce path of $P(r, \dots, r, n, \dots, n)$}\label{Fig5}
\end{figure}

We first state a well-known lemma that is useful for our work.

\begin{lem}[Stanley \cite{S2}]\label{lem:Stan} 
Let $M=\{1^{c_1}, 2^{c_2}, \dots, n^{c_n}\}$ be a multiset of cardinality $n=c_1+c_2+\cdots+c_n$ and let $S(M)$ be the set of permutations on $M$. An {\rm inversion} of a permutation $\pi=\pi_1\pi_2\cdots \pi_n\in S(M)$ is a pair $(\pi_i, \pi_j)$ with $i<j$ and $\pi_i>\pi_j$. Then 
$$\sum_{\pi \in S(M)} t^{i(\pi)}=\frac{[n]_t!}{[c_1]_t! \cdots [c_n]_t!},$$
where $i(\pi)$ is the number of inversions of $\pi$.
\end{lem}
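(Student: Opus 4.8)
The final statement to prove is Lemma~\ref{lem:Stan}, the classical result of Stanley that the inversion generating function over permutations of a multiset equals the $t$-multinomial coefficient $\frac{[n]_t!}{[c_1]_t!\cdots[c_n]_t!}$. Since this is a well-known lemma attributed to Stanley, I want to lay out a clean self-contained plan.

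The plan is to proceed by induction on $n=c_1+\cdots+c_n$, building up the permutation of $M=\{1^{c_1},2^{c_2},\dots,n^{c_n}\}$ by deciding where to insert the copies of the largest letter. First I would fix the multiset $M'$ obtained from $M$ by deleting all $c_n$ copies of the letter $n$, so $|M'|=n-c_n$, and note that any permutation $\pi\in S(M)$ is uniquely obtained from a permutation $\sigma\in S(M')$ by choosing a subset of $c_n$ of the $n$ available positions (gaps, including ends) to receive the copies of $n$; since all inserted letters are equal and maximal, the inversions of $\pi$ split as the inversions of $\sigma$ plus one inversion for each pair (copy of $n$, letter of $\sigma$ lying to its right). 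The key combinatorial step is then to show that summing $t^{(\text{number of letters of }\sigma\text{ to the right of some inserted }n)}$ over all ways to interleave $c_n$ indistinguishable maximal letters into a word of length $n-c_n$ yields the Gaussian binomial coefficient $\binom{n}{c_n}_t = \frac{[n]_t!}{[c_n]_t!\,[n-c_n]_t!}$. This is the standard fact that $\binom{n}{k}_t$ is the generating function for lattice paths (or for $0/1$ words with $k$ ones) by area/inversion statistic, which I would either cite or prove by the Pascal-type recursion $\binom{n}{k}_t=\binom{n-1}{k-1}_t+t^{k}\binom{n-1}{k}_t$ obtained by conditioning on whether the last letter is an $n$ or a letter of $\sigma$.

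Putting these together, I get
\[
\sum_{\pi\in S(M)} t^{i(\pi)}
= \binom{n}{c_n}_t \sum_{\sigma\in S(M')} t^{i(\sigma)}
= \frac{[n]_t!}{[c_n]_t!\,[n-c_n]_t!}\cdot \frac{[n-c_n]_t!}{[c_1]_t!\cdots[c_{n-1}]_t!}
= \frac{[n]_t!}{[c_1]_t!\cdots[c_n]_t!},
\]
where the middle equality uses the inductive hypothesis applied to $M'$ (of smaller cardinality) and the factor $[n-c_n]_t!$ cancels. The base case $n=0$ (or $n=1$) is trivial since both sides equal $1$. As usual one should take care that the statistic really decomposes additively: an inserted maximal letter forms an inversion with every later letter of $\sigma$ and with no later inserted copy of $n$ (equal letters do not count), and it forms no inversion with any earlier letter; this is the only place a careless reader could slip.

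The main obstacle, such as it is, is the interleaving identity for the Gaussian binomial coefficient — everything else is bookkeeping. I would present that identity via the recursion $\binom{n}{k}_t=\binom{n-1}{k-1}_t+t^{k}\binom{n-1}{k}_t$ (conditioning on the last position of the interleaved word) together with the boundary values $\binom{n}{0}_t=\binom{n}{n}_t=1$, which pins down the generating function uniquely and matches $\frac{[n]_t!}{[k]_t!\,[n-k]_t!}$ by the same recursion for the Gaussian binomials; alternatively this can simply be quoted from Stanley~\cite{S2}. I expect the write-up to be short.
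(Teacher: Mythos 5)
Your proof is correct. Note, however, that the paper does not prove this lemma at all — it is quoted directly from Stanley \cite{S2} as a known result — so there is no in-paper argument to compare against. Your argument (stripping out the $c_n$ copies of the maximal letter, observing that the inversion statistic decomposes additively because a copy of $n$ inverts exactly with the smaller letters to its right and with nothing else, and identifying the interleaving generating function with the Gaussian binomial $\binom{n}{c_n}_t$ via the recursion $\binom{N}{k}_t=\binom{N-1}{k-1}_t+t^{k}\binom{N-1}{k}_t$) is the standard textbook proof and is sound. The only point worth tightening in a write-up is the induction parameter: if $c_n=0$ the cardinality does not drop, so you should either induct on the number of distinct letters actually present or dispose of that case separately (it is trivial since $[0]_t!=1$).
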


By using Theorem \ref{thm:en}, Theorem \ref{thm:main}, and Lemma \ref{lem:Stan}, we have an explicit formula for $X_{G}(\bold{x},t)$ in the $e$-basis expansion.

\begin{thm} \label{thm:form1}
Let $P=P(m_1,m_2 \dots, m_{n-1})$ be a natural unit interval order and $r,~s$ be positive integers satisfying $s\leq r \leq n-1$. Let $G$ be the incomparability graph of $P$. If $m_1=\cdots=m_s=r$ and $m_{s+1}=n$, then 
$$X_{G}(\bold{x},t)=\sum_{\ell=0}^{\min\{n-r,s\}}t^{(r-s)\ell} \frac{[n-r]_t! [s]_t! [r-\ell-1]_t! [n-s-\ell-1]_t! [n-2\ell]_t}{[n-r-\ell]_t! [s-\ell]_t! [r-s-1]_t!} e_{(n-\ell,\ell)}.$$
Consequently, $X_{G}(\bold{x},t)$ is $e$-positive and $e$-unimodal with center of symmetry $\frac{\binom{n}{2}-(n-r)s}{2}$.
\end{thm}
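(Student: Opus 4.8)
The plan is to start from the combinatorial model for the $e$-expansion coefficients established in Theorem~\ref{thm:main}, namely $C_{(n-\ell,\ell)}(t)=\sum_{T\in\widebar{\mathcal{T'}}_{P,2^\ell1^{n-2\ell}}}t^{\mathrm{inv}_G(T)}$, and to identify the set $\widebar{\mathcal{T'}}_{P,2^\ell1^{n-2\ell}}$ very explicitly in the special case $m_1=\cdots=m_s=r$, $m_{s+1}=\cdots=m_{n-1}=n$. In this poset the relations are exactly $i<_P j$ for $i\le s$ and $j\ge r+1$; the elements $\{1,\dots,r\}$ form a clique and $\{r+1,\dots,n\}$ form a clique in $G$. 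First I would describe what a $P$-tableau of shape $2^\ell1^{n-2\ell}$ looks like: the second column entries must each be $\ge r+1$ and their ``partners'' in the first column must be $\le s$; moreover column-strictness along rows forces the $\ell$ first-column entries in rows $1,\dots,\ell$ to lie in $\{1,\dots,s\}$ and their row-partners to lie in $\{r+1,\dots,n\}$. The condition defining $\widebar{\mathcal{T'}}$ (no $s'\ge\ell+2$ with $a_{i,1}<_P a_{s',1}$ for all $i$ between $\ell+1$ and $s'-1$) will, in this homogeneous situation, translate into a clean statement: below the two-column part, the first column is a shuffle of the remaining small elements (those $\le r$ not yet used) and large elements (those $\ge r+1$ not yet used) in which no large element is immediately ``reachable'' from all the small elements above it since $b_{\ell+1}$ would have been $\ge r+1$ wait—I need to check the exact forbidden pattern, but the upshot should be that $\widebar{\mathcal{T'}}_{P,2^\ell1^{n-2\ell}}$ is in bijection with a set of fillings whose inversion generating function is a product of $t$-multinomials by Lemma~\ref{lem:Stan}.

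Concretely, the key steps are: (1) fix $\ell$ with $0\le\ell\le\min\{n-r,s\}$ and set up a bijection between $\widebar{\mathcal{T'}}_{P,2^\ell1^{n-2\ell}}$ and triples consisting of (a) the choice of which $\ell$ of the $s$ small elements $\{1,\dots,s\}$ go into the second-column partners' first-column slots together with an ordering, (b) the choice of which $\ell$ of the $n-r$ large elements pair with them together with an ordering, and (c) an arrangement of the leftover elements in the single-column tail subject to the ``no~$s'$'' condition; (2) compute $\mathrm{inv}_G$ additively across these pieces — the $G$-inversions split into inversions within the clique $\{1,\dots,r\}$, inversions within the clique $\{r+1,\dots,n\}$, and cross edges $\{i,j\}$ with $i\le r<j\le r+?$; use palindromicity/unimodality building blocks (Lemma~\ref{lem:basic}) together with Lemma~\ref{lem:Stan} to evaluate each factor as a $t$-analogue of a binomial or a product $[N]_t!/([A]_t![B]_t!)$; (3) assemble the factors, extract the monomial power $t^{(r-s)\ell}$ coming from the forced inversions among the distinguished large/small elements, and match the result with the claimed formula $t^{(r-s)\ell}\frac{[n-r]_t![s]_t![r-\ell-1]_t![n-s-\ell-1]_t![n-2\ell]_t}{[n-r-\ell]_t![s-\ell]_t![r-s-1]_t!}$. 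As an independent sanity check I would verify the $\ell=0$ term against Theorem~\ref{thm:en} (it must equal $C_n(t)=[n]_t\prod_{i=2}^n[b_i]_t$ for this graph) and check the total degree sums to $|E(G)|$.

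Once the explicit formula is in hand, $e$-positivity is immediate since every factor is a polynomial in $t$ with nonnegative coefficients, and $e$-unimodality follows from Remark~\ref{rem:basic}(3) provided I show each $C_{(n-\ell,\ell)}(t)$ is unimodal and palindromic with the common center of symmetry $\frac{1}{2}\big(\binom{n}{2}-(n-r)s\big)$. For unimodality I would invoke Lemma~\ref{lem:basic}: each $t$-multinomial $[N]_t!/([A]_t![B]_t!)=\prod$(Gaussian binomials) is unimodal and palindromic, a product of such is unimodal and palindromic, and multiplying by the monomial $t^{(r-s)\ell}$ only shifts the center. For the center of symmetry I would add up the centers of the individual palindromic factors (each $[m]_t$ has center $\frac{m-1}{2}$, each $[m]_t!$ has center $\frac14 m(m-1)$, etc.) and check that the $\ell$-dependence cancels so that all $C_{(n-\ell,\ell)}(t)$ share the same center; this forced cancellation is exactly what the general palindromicity of $X_G(\mathbf{x},t)$ from Theorem~\ref{thm:SW} predicts, so it is a useful consistency check rather than a separate difficulty.

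The main obstacle I expect is step (1)–(2): pinning down precisely which tableaux survive in $\widebar{\mathcal{T'}}_{P,2^\ell1^{n-2\ell}}$ and then showing the $G$-inversion statistic factors as a product over independent ``shuffle'' blocks so that Lemma~\ref{lem:Stan} applies on the nose. The condition ``$T$ has no $s'\ge\ell+2$ with $a_{i,1}<_P a_{s',1}$ for all $i\in\{\ell+1,\dots,s'-1\}$'' is a genuine constraint on the single-column tail, and translating it into a count — presumably ``the tail is any interleaving of leftover small and large elements such that the topmost large element in the tail is not comparable to every small element above it,'' which for this $\mathbf m$ amounts to a condition on the first $r-\ell$ small positions — requires care; getting the counts of leftover small vs. large elements exactly right (there are $r-\ell$ small elements $\le r$ left and $n-r-\ell$ large elements left, with $s-\ell$ of the small ones being $\le s$ and hence still order-related to large elements) is where the parameters $r-\ell-1$, $s-\ell$, $r-s-1$, $n-s-\ell-1$ in the formula must emerge. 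I would handle this by an induction on $\ell$ paralleling the injection $\psi_\ell$ of Section~\ref{sec:firstmain}, peeling off one pair at a time and tracking the resulting multiplicative change in the inversion generating function.
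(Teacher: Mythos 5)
Your plan follows essentially the same route as the paper's proof: fix $\ell$, sort the tableaux of $\widebar{\mathcal{T'}}_{P,2^{\ell}1^{n-2\ell}}$ by which $\ell$-subsets $B_1\subset\{1,\dots,s\}$ and $B_2\subset\{r+1,\dots,n\}$ occupy the two-column block, split ${\rm inv}_G$ into the within-block, within-tail, and cross contributions, evaluate the shuffle-type pieces with Lemma~\ref{lem:Stan}, extract the forced $t^{(r-s)\ell}$, and finish unimodality and the common center of symmetry via Lemma~\ref{lem:basic}. The one step you leave genuinely open is the one you flag yourself: the generating function of the single-column tail subject to the ``no $s'$'' condition. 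The paper closes this without any induction by observing that the tail, on the leftover elements, is order-isomorphic to a smaller poset $R_\ell$ of the \emph{same} two-parameter family ($m_1=\cdots=m_{s-\ell}=r-\ell$, then jump to the top), so the constrained tail sum is exactly $\sum_{T^b\in\widebar{\mathcal{T'}}_{R_\ell,1^{n-2\ell}}}t^{{\rm inv}(T^b)}$, i.e.\ the coefficient of $e_{n-2\ell}$ for $R_\ell$, which Theorem~\ref{thm:en} evaluates in closed form as $[n-2\ell]_t\,[r-\ell-1]_t!\,[n-s-\ell-1]_t!/[r-s-1]_t!$. Your proposed ``peel off one pair at a time'' induction would have to re-derive precisely this self-similarity, so you should replace it with the direct appeal to Theorems~\ref{thm:main} and~\ref{thm:en} applied to $R_\ell$; with that substitution the argument is complete and matches the paper's.
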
 

\begin{proof}
If we let $A_1=\{1,\dots, s \}$ and $A_2=\{r+1, \dots, n\}$, then any two elements in $A_i$ are incomparable for each $i=1,2$, and $a<_Pb$ for any $a\in A_1$ and $b\in A_2$. Note that if $T=[a_{i,j}]\in\widebar{\mathcal{T'}}_{P,2^{\ell}1^{n-2\ell}}$, then $\{a_{i,1}~|~1\leq i \leq \ell\}\subset A_1$ and $\{a_{i,2}~|~1\leq i \leq \ell\}\subset A_2$.\\ 
For any $B_1\subset A_1$ and $B_2 \subset A_2$ with $|B_1|=|B_2|=\ell$, let $\widebar{\mathcal{T'}}_P(B_1,B_2)$ be the set of $T=[a_{i,j}]$ in $\widebar{\mathcal{T'}}_{P,2^{\ell}1^{n-2\ell}}$ satisfying 
$\{a_{i,1}~|~1\leq i \leq \ell \}=B_1$ and $\{a_{i,2}~|~1\leq i \leq \ell \}=B_2$.
Then 
$$\widebar{\mathcal{T'}}_{P,2^{\ell}1^{n-2\ell}}=\bigcup_{\substack{B_1\subset A_1\\ B_2\subset A_2}}\widebar{\mathcal{T'}}_P(B_1,B_2).$$

By Theorem \ref{thm:main}, we know that the coefficient $C_{(n-\ell,\ell)}(t)$ of $e_{(n-\ell,\ell)}$ in the $e$-expansion of $X_{G}(\bold{x},t)$ is equal to the sum of $t^{{\rm inv}_{G}(T)}$'s for all $T\in\widebar{\mathcal{T'}}_{P,2^{\ell}1^{n-2\ell}}$. Hence, 
$$C_{(n-\ell,\ell)}(t)=\sum_{\substack{B_1\subset A_1\\B_2\subset A_2}}\sum_{T\in \widebar{\mathcal{T'}}_P(B_1,B_2)}t^{{\rm inv}_G(T)}.$$

For convenience, we divide the set of $G$-inversions of $T \in \widebar{\mathcal{T'}}_P(B_1,B_2)$ into three sets. We let ${\rm row}(i)$ denote the row in which $i$ is located in $T$: \\

\begin{enumerate} 
\item[$\bullet$] $I_1(T)=\{\{i,j\}\in E(G)~|~i<j, ~{\rm row}(j)<{\rm row}(i)\leq \ell \}$.
\item[$\bullet$] $I_2(T)=\{\{i,j\}\in E(G)~|~i<j, ~\ell<{\rm row}(j)<{\rm row}(i) \}$.
\item[$\bullet$] $I_3(T)=\{\{i,j\}\in E(G)~|~i<j, ~{\rm row}(j)\leq \ell<{\rm row}(i) \}$.
\end{enumerate}

\vspace{3mm}
\noindent Then ${\rm inv}_{G}(T)=|I_1(T)|+|I_2(T)|+|I_3(T)|$. Note that $I_3(T_1)=I_3(T_2)$ for any $T_1, T_2\in \widebar{\mathcal{T'}}_P(B_1,B_2)$. Hence we can use $I_3(B_1,B_2)$ to denote the set $I_3(T)$  for any $T \in \widebar{\mathcal{T'}}_P(B_1,B_2)$, and we have
$$C_{(n-\ell,\ell)}(t)=\sum_{\substack{B_1\subset A_1\\B_2\subset A_2}}\left(t^{|I_3(B_1,B_2)|}\sum_{T\in \widebar{\mathcal{T'}}_P(B_1,B_2)}t^{|I_1(T)|+|I_2(T)|}\right).$$
For any $B_1\subset A_1$ and $B_2\subset A_2$ with $|B_1|=|B_2|=\ell$, let $Q(B_1,B_2)$ and $R(B_1,B_2)$ be the subposets of $P$ with elements $B_1\cup B_2$ and $[n]-(B_1\cup B_2)$, respectively. Note that the subposet $Q(B_1,B_2)$ is equivalent to the unit interval order $Q_{\ell}=P(m_1,m_2,\dots,m_{2\ell-1})$ with $m_1=m_2=\dots=m_{\ell}=\ell$ and $m_{\ell+1}=2\ell$ and the subposet $R(B_1,B_2)$ is equivalent to the unit interval order $R_{\ell}=P(m_1,m_2,\dots, m_{n-2\ell-1})$ with $m_1=m_2=\dots =m_{s-\ell}=r-\ell$ and $m_{s-\ell+1}=n$. Therefore, for any fixed $B_1\subset A_1$ and $B_2\subset A_2$ with $|B_1|=|B_2|=\ell$,
$$\sum_{T\in \widebar{\mathcal{T'}}_P(B_1,B_2)}t^{|I_1(T)|+|I_2(T)|}=
\left(\sum_{ T^u\in \widebar{\mathcal{T'}}_{Q_{\ell},2^{\ell} }}t^{{\rm inv}_{{\rm inc}(Q_{\ell})}(T^u)}\right) \left(\sum_{ T^b\in \widebar{\mathcal{T'}}_{R_{\ell},1^{n-2\ell} }}t^{{\rm inv}_{{\rm inc}(R_{\ell})}(T^b)}\right).$$ 
If $T^u\in \widebar{\mathcal{T'}}_{Q_{\ell},2^{\ell}}$, then the first column of $T^u$ must be occupied by the elements of $B_1$ and the second column of $T^u$ must be occupied by the elements of $B_2$. Let $S(B_i)$ be the set of permutations on $B_i$ for $i=1,2$, then 
$$\sum_{ T^u\in \widebar{\mathcal{T'}}_{Q_{\ell},2^{\ell} }}t^{{\rm inv}_{{\rm inc}(Q_{\ell})}(T^u)}=\left(\sum_{\pi_1 \in S(B_1)}t^{i(\pi_1)}\right) \left(\sum_{\pi_2 \in S(B_2)}t^{i(\pi_2)}\right),$$ 
where $i(\pi_1)$ and $i(\pi_2)$ are the numbers of inversions of permutations. Note that there is no inversion $\{i,j\}$ for $i\in B_1$, $j\in B_2$ since $i<_P j$ for any  $i\in B_1$, $j\in B_2$. By using Lemma \ref{lem:Stan}, we can evaluate each sums and we have $\sum_{\pi_1 \in S(B_1)}t^{i(\pi_1)}=\sum_{\pi_2 \in S(B_2)}t^{i(\pi_2)}=[\ell]_t!.$\\
\noindent On the other hand, since $R_{\ell}=P(m_1,m_2,\dots, m_{n-2\ell-1})$ with $m_1=m_2=\dots =m_{s-\ell}=r-\ell$ and $m_{s-\ell+1}=n$, by using Theorem \ref{thm:en}, we have
$$\sum_{ T^b\in \widebar{\mathcal{T'}}_{R_{\ell},1^{n-2\ell} }}t^{{\rm inv}_{{\rm inc}(R_{\ell})}(T^b)}=[n-2\ell]_t [r-\ell-1]_t! \frac{ [n-s-\ell-1]_t!}{[r-s-1]_t!}.$$
Therefore, 
$$\sum_{T\in \widebar{\mathcal{T'}}_P(B_1,B_2)}t^{|I_1(T)|+|I_2(T)|}=\frac{([\ell]_t!)^2[n-2\ell]_t [r-\ell-1]_t! [n-s-\ell-1]_t!}{[r-s-1]_t!},$$
and then
$$C_{(n-\ell,\ell)}(t)=\frac{([\ell]_t!)^2[n-2\ell]_t [r-\ell-1]_t! [n-s-\ell-1]_t!}{[r-s-1]_t!}\sum_{\substack{B_1\subset A_1\\B_2\subset A_2}}t^{|I_3(B_1,B_2)|}.$$

Now, we consider the sum $\sum t^{|I_3(B_1,B_2)|}$. For given $B_1=\{b_{11},b_{12},\dots,b_{1\ell}\}\subset A_1$ and $B_2=\{b_{21},b_{22},\dots,b_{2\ell}\} \subset A_2$, we divide $I_3(B_1,B_2)$ into three sets:\\

\begin{enumerate} 
\item[$\bullet$] $I_{31}(B_1,B_2)=\{\{i,j\}\in I_3(B_1,B_2)~|~i<j\leq s\}$,
\item[$\bullet$] $I_{32}(B_1,B_2)=\{\{i,j\} \in I_3(B_1,B_2)~|~r+1 \leq i<j \}$,
\item[$\bullet$] $I_{33}(B_1,B_2)=\{\{i,j\} \in I_3(B_1,B_2)~|~s+1\leq i \leq r <j \}$,
\end{enumerate}

\vspace{3mm}
\noindent so that $|I_3(B_1,B_2)|=|I_{31}(B_1,B_2)|+|I_{32}(B_1,B_2)|+|I_{33}(B_1,B_2)|$.\\
Note that $|I_{31}(B_1,B_2)|=\sum_{i=1}^{\ell}|\{a\in A_1~|~a<b_{1i}\}|$. Therefore, if we define a permutation $\sigma(B_1)=\sigma(B_1)_1 \sigma(B_1)_2 \dots \sigma(B_1)_{\ell}$ as $\sigma(B_1)_i=2$ when $i\in B_1$ and $\sigma(B_1)_i=1$ otherwise, then $\sigma(B_1)$ is a permutation on $\{1^{s-\ell}2^{\ell}\}$ with inversion $i(\sigma(B_1))=|I_{31}(B_1,B_2)|$.\\ 
Likewise, $|I_{32}(B_1,B_2)|=\sum_{i=1}^{\ell}|\{a\in A_2~|~a<b_{2i}\}|$. Hence, if we define a permutation $\sigma(B_2)=\sigma(B_2)_1 \sigma(B_2)_2 \dots \sigma(B_2)_{\ell}$ as $\sigma(B_2)_i=2$ when $i\in B_2$ and $\sigma(B_2)_i=1$ otherwise, then $\sigma(B_2)$ is a permutation on $\{1^{n-r-\ell}2^{\ell}\}$ with inversion $i(\sigma(B_2))=|I_{32}(B_1,B_2)|$.\\
Lastly, $|I_{33}(B_1,B_2)|$ is always $(r-s)\ell$. Therefore,
\begin{eqnarray*}
\sum_{\substack{B_1\subset A_1\\B_2\subset A_2}}t^{|I_3(B_1,B_2)|}&=&t^{(r-s)\ell}\sum_{\substack{B_1\subset A_1\\B_2\subset A_2}}t^{|I_{31}(B_1,B_2)|+|I_{32}(B_1,B_2)|}\\
&=&t^{(r-s)\ell}\sum_{\substack{\sigma_1\in S(M_1)\\ \sigma_2\in S(M_2) }}t^{i(\sigma_1)+i(\sigma_2)}\\
&=&t^{(r-s)\ell} \left(\sum_{\sigma_1\in S(M_1)}t^{i(\sigma_1)} \right) \left(\sum_{\sigma_2\in S(M_2)} t^{i(\sigma_2)} \right),
\end{eqnarray*}
where $S(M_i)$ is the set of permutations on a multiset $M_i$ with $M_1=\{1^{s-\ell}2^{\ell}\}$ and $M_2=\{1^{n-r-\ell}2^{\ell}\}$. By Lemma \ref{lem:Stan}, 
we have
$$\sum_{\sigma_1\in S(M_1)}t^{i(\sigma_1)}=\frac{[s]_t!}{[s-\ell]_t![\ell]_t!},$$
and
$$\sum_{\sigma_2\in S(M_2)}t^{i(\sigma_2)}=\frac{[n-r]_t!}{[n-r-\ell]_t![\ell]_t!}.$$

By combining all of these, we have

\begin{eqnarray*}
C_{(n-\ell,\ell)}(t)&=&\frac{([\ell]_t!)^2[n-2\ell]_t [r-\ell-1]_t! [n-s-\ell-1]_t!}{[r-s-1]_t!}\sum_{\substack{B_1\subset A_1\\B_2\subset A_2}}t^{|I_3(B_1,B_2)|}\\
&=& \frac{([\ell]_t!)^2[n-2\ell]_t [r-\ell-1]_t! [n-s-\ell-1]_t!}{[r-s-1]_t!} \frac{t^{(r-s)\ell} [s]_t! [n-r]_t!}{[s-\ell]_t![\ell]_t![n-r-\ell]_t![\ell]_t!} \\
&=&t^{(r-s)\ell} \frac{[n-r]_t! [s]_t! [r-\ell-1]_t! [n-s-\ell-1]_t! [n-2\ell]_t}{[n-r-\ell]_t! [s-\ell]_t! [r-s-1]_t!}
\end{eqnarray*}

as we desired.

Moreover, $t$-polynomials $\frac{[n-r]_t!}{[n-r-\ell]_t!}$, $\frac{[s]_t!}{[s-\ell]_t!}$, $\frac{[r-\ell-1]_t!}{[r-s-1]_t!}$, and $t^{(r-s)\ell}[n-s-\ell-1]_t! [n-2\ell]_t$ are unimodal with center of symmetries $\ell(2n-2r-\ell-1)/4$, $\ell(2s-\ell-1)/4$, $(s-\ell)(2r-s-\ell-3)/4$, and $\{2\ell(r-s)+\binom{n-s-\ell-1}{2}+n-2\ell-1\}/2$, respectively. By lemma \ref{lem:basic}, $C_{(n-\ell,\ell)}(t)$ is unimodal with center of symmetry
 $$\frac{\ell(2n-2r+2s-2\ell-2)+(s-\ell)(2r-s-\ell-3)+4\ell(r-s)+2\binom{n-s-\ell-1}{2}+2(n-2\ell-1)}{4}.$$
With some calculation, we can see that $C_{(n-\ell,\ell)}(t)$ is unimodal with center of symmetry  $\frac{\binom{n}{2}-(n-r)s}{2}$.
\end{proof}

The next two corollaries follow easily from Theorem \ref{thm:form1}.

\begin{cor} \label{cor:oneline}
Let $P=P(m_1,m_2 \dots, m_{n-1})$ be a natural unit interval order and $G$ be the incomparability graph of $P$. If $m_1=r$ for $r\leq n-1$ and $m_{2}=n$, then 
$$X_{G}(\bold{x},t)=[n-2]_t!([n]_t [r-1]_t e_n + t^{r-1}[n-r]_t e_{(n-1,1)}).$$
\end{cor}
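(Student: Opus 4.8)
The plan is to derive Corollary~\ref{cor:oneline} as the special case $s=1$ of Theorem~\ref{thm:form1}. With $s=1$ we have $m_1=r$ and $m_2=n$, exactly the hypothesis of the corollary, and the summation range becomes $\ell\in\{0,1,\dots,\min\{n-r,1\}\}$. Since $r\leq n-1$ forces $n-r\geq 1$, the index $\ell$ takes only the two values $0$ and $1$, so the formula of Theorem~\ref{thm:form1} collapses to a two-term sum contributing the coefficients of $e_n$ (the $\ell=0$ term) and of $e_{(n-1,1)}$ (the $\ell=1$ term).

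First I would substitute $s=1$ and $\ell=0$ into
$$t^{(r-s)\ell}\,\frac{[n-r]_t!\,[s]_t!\,[r-\ell-1]_t!\,[n-s-\ell-1]_t!\,[n-2\ell]_t}{[n-r-\ell]_t!\,[s-\ell]_t!\,[r-s-1]_t!}.$$
With $\ell=0$ the prefactor $t^{(r-s)\ell}=1$, the ratios $[n-r]_t!/[n-r-\ell]_t!$ and $[s]_t!/[s-\ell]_t!$ are $1$, and $[n-2\ell]_t=[n]_t$; one is left with $[r-1]_t!\,[n-2]_t!/[r-2]_t!=[n-2]_t!\,[r-1]_t$, which is the claimed coefficient of $e_n$. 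Then I would set $\ell=1$: the prefactor is $t^{r-1}$, the ratio $[n-r]_t!/[n-r-1]_t!=[n-r]_t$, the ratio $[s]_t!/[s-1]_t!=[1]_t=1$, the ratio $[r-2]_t!/[r-2]_t!=1$, the factorial $[n-s-\ell-1]_t!=[n-3]_t!$, and $[n-2\ell]_t=[n-2]_t$; multiplying gives $t^{r-1}[n-r]_t\,[n-3]_t!\,[n-2]_t=t^{r-1}[n-r]_t\,[n-2]_t!$, the claimed coefficient of $e_{(n-1,1)}$. Factoring out the common $[n-2]_t!$ yields precisely
$$X_G(\mathbf{x},t)=[n-2]_t!\bigl([n]_t[r-1]_t\,e_n+t^{r-1}[n-r]_t\,e_{(n-1,1)}\bigr).$$

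There is essentially no obstacle here: the only points needing care are the degenerate $q$-factorial cancellations when $r=1$ or $r=n-1$ (so that some $[0]_t!=1$ or $[r-s-1]_t!=[ -1 ]_t!$ situation is avoided), but the constraint $r\leq n-1$ together with $s\leq r$ and $s=1$ keeps every index appearing in a factorial nonnegative, and $[r-s-1]_t!=[r-2]_t!$ is defined (with the usual convention $[0]_t!=1$ when $r=2$, and when $r=1$ the term $\ell=1$ has $[r-\ell-1]_t!=[-1]_t!$ which does not arise because then $r-s-1=-1<0$ would need separate attention — but Theorem~\ref{thm:form1} as stated already presupposes these factorials make sense, so one simply inherits that validity). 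Thus the corollary is an immediate specialization, and I would present it in one or two lines of substitution.
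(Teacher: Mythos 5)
Your proposal is correct and is exactly the paper's route: the corollary is stated as an immediate specialization of Theorem~\ref{thm:form1} with $s=1$, and your evaluations of the $\ell=0$ and $\ell=1$ terms (giving $[n-2]_t!\,[n]_t[r-1]_t$ and $t^{r-1}[n-r]_t\,[n-2]_t!$ respectively) are accurate.
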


\begin{cor}
Let $P=P(m_1,m_2 \dots, m_{n-1})$ be a natural unit interval order and $G$ be the incomparability graph of $P$. If $m_1=\cdots=m_{r-1}=r$ and $m_{r}=n$ for $r\leq n-1$ , then 
$$X_{G}(\bold{x},t)=\sum_{\ell=0}^{\min\{n-r,r-1\}}t^{\ell}[n-r]_t! [r-1]_t! [n-2\ell]_t e_{(n-\ell,\ell)}.$$
Here, the incomparability graph $G$ is $K_{(r,n-r+1)}$-chain mentioned in Remark~\ref{rem:uni} (3). 
\end{cor}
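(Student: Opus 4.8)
\textbf{Proof proposal for Theorem~\ref{thm:form1}.}
The plan is to compute the coefficient $C_{(n-\ell,\ell)}(t)$ of $e_{(n-\ell,\ell)}$ directly from the combinatorial model of Theorem~\ref{thm:main}, namely $C_{(n-\ell,\ell)}(t)=\sum_{T\in\widebar{\mathcal{T'}}_{P,2^{\ell}1^{n-2\ell}}}t^{{\rm inv}_G(T)}$, and then to read off unimodality from Lemma~\ref{lem:basic} once the formula is seen to be a product of $q$-analogues. First I would exploit the very rigid structure of $P$: setting $A_1=\{1,\dots,s\}$ and $A_2=\{r+1,\dots,n\}$, all comparabilities go from $A_1$ up to $A_2$, so any $T\in\widebar{\mathcal{T'}}_{P,2^{\ell}1^{n-2\ell}}$ must have its first $\ell$ entries of column~1 forming a subset $B_1\subset A_1$ and its $\ell$ entries of column~2 forming a subset $B_2\subset A_2$. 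I would therefore stratify $\widebar{\mathcal{T'}}_{P,2^{\ell}1^{n-2\ell}}=\bigsqcup_{B_1,B_2}\widebar{\mathcal{T'}}_P(B_1,B_2)$ over these pairs of $\ell$-subsets.

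Next I would split the $G$-inversions of a tableau $T\in\widebar{\mathcal{T'}}_P(B_1,B_2)$ into the three families $I_1(T),I_2(T),I_3(T)$ according to whether both endpoints lie in the top $\ell$ rows, both lie below, or they straddle. The key observations are: (i) $I_3$ depends only on $(B_1,B_2)$, not on $T$; (ii) inside a fixed block $(B_1,B_2)$, a tableau is freely the data of a linear order on $B_1$ (the first column, top part), a linear order on $B_2$ (the second column), together with a $P$-tableau of shape $1^{n-2\ell}$ on the remaining elements; and these three pieces contribute $|I_1|$, (part of) $|I_2|$, and the rest independently. The first two pieces are governed by Lemma~\ref{lem:Stan} and each evaluate to $[\ell]_t!$, since there are no inversions between $B_1$ and $B_2$; the bottom piece is a chromatic quasisymmetric coefficient for the smaller unit interval order $R_\ell=P(\underbrace{r-\ell,\dots,r-\ell}_{s-\ell},n,\dots,n)$, whose $e_{n-2\ell}$-coefficient is handed to us by Theorem~\ref{thm:en} as $[n-2\ell]_t\,[r-\ell-1]_t!\,[n-s-\ell-1]_t!/[r-s-1]_t!$. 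Finally, summing $t^{|I_3(B_1,B_2)|}$ over all $(B_1,B_2)$ factors as $t^{(r-s)\ell}$ (the constant straddling contribution $|I_{33}|=(r-s)\ell$) times two multiset-permutation generating functions, which Lemma~\ref{lem:Stan} evaluates to $[s]_t!/([s-\ell]_t![\ell]_t!)$ and $[n-r]_t!/([n-r-\ell]_t![\ell]_t!)$. Multiplying everything together and cancelling the four factors of $[\ell]_t!$ yields the claimed closed form; the range of $\ell$ is $0\le\ell\le\min\{n-r,s\}$ since $B_1\subset A_1$, $B_2\subset A_2$ force $\ell\le s$ and $\ell\le n-r$.

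For the $e$-unimodality and the center of symmetry, I would write $C_{(n-\ell,\ell)}(t)$ as a product of the manifestly unimodal palindromic factors $[n-r]_t!/[n-r-\ell]_t!$, $[s]_t!/[s-\ell]_t!$, $[r-\ell-1]_t!/[r-s-1]_t!$, and $t^{(r-s)\ell}[n-s-\ell-1]_t![n-2\ell]_t$ (each a product of $q$-integers, times a monomial, hence unimodal and palindromic by repeated use of Lemma~\ref{lem:basic}), add up their centers of symmetry, and simplify; independence of the total center from $\ell$ is exactly what Remark~\ref{rem:basic}(3) needs to conclude $e$-unimodality of $X_G(\bold{x},t)$, and $e$-positivity is immediate from the product formula. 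The main obstacle I anticipate is step (ii): justifying carefully that within a block $(B_1,B_2)$ the tableau decomposes as a free product of a permutation of $B_1$, a permutation of $B_2$, and a $P$-tableau of shape $1^{n-2\ell}$ on $[n]\smallsetminus(B_1\cup B_2)$, \emph{and} that under this decomposition $|I_1|+|I_2|$ splits as the sum of the inversion statistics of the three pieces with no cross terms — this requires knowing that there are no $G$-edges between $B_1$ and $B_2$ and checking that an element of the bottom $1^{n-2\ell}$ part can never be involved in an $I_1$- or $I_2$-inversion with a top-part element except through the already-accounted $I_3$. The bookkeeping of which comparabilities survive in the subposets $Q_\ell$ and $R_\ell$, and that $\widebar{\mathcal{T'}}$ restricts correctly to them, is the delicate part; everything after that is $q$-integer algebra.
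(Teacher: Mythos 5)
Your proposal is correct and is essentially the paper's own route: it reproduces, step for step, the proof of Theorem~\ref{thm:form1} (the stratification over pairs $(B_1,B_2)$, the $I_1,I_2,I_3$ split of inversions, Lemma~\ref{lem:Stan} for the column and subset-selection factors, Theorem~\ref{thm:en} for the bottom poset $R_\ell$, and Lemma~\ref{lem:basic} for unimodality), and the paper obtains this corollary from that theorem simply by setting $s=r-1$, under which $t^{(r-s)\ell}=t^{\ell}$, $[n-s-\ell-1]_t!=[n-r-\ell]_t!$, $[s-\ell]_t!=[r-\ell-1]_t!$ and $[r-s-1]_t!=[0]_t!=1$, so the coefficient collapses to $t^{\ell}[n-r]_t!\,[r-1]_t!\,[n-2\ell]_t$. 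The only step you leave implicit is this final specialization, which is immediate once the general formula is in hand.
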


\subsection{The second explicit formula} \label{sec:secondform}

In this subsection we consider natural unit interval orders $P=P(m_1,m_2 \dots, m_{n-1})$ with $m_1=r$, $m_2=\cdots=m_{s}=n-1$, and $m_{s+1}=\cdots=m_{n-1}=n$ for some positive integers $s\leq r\leq n-2$, which form a subclass of the natural unit interval orders we considered in Section~\ref{sec:firstmain}. We know from Theorem~\ref{thm:main} that $X_{inc(P)}(\bold{x},t)$'s are $e$-positive, and we prove the $e$-unimodality of them.
Figure \ref{Fig6} shows a corresponding Catalan path of $P$ with the bounce path in dotted line.

\begin{figure}[h]
\centering {\includegraphics[height=4.3cm]{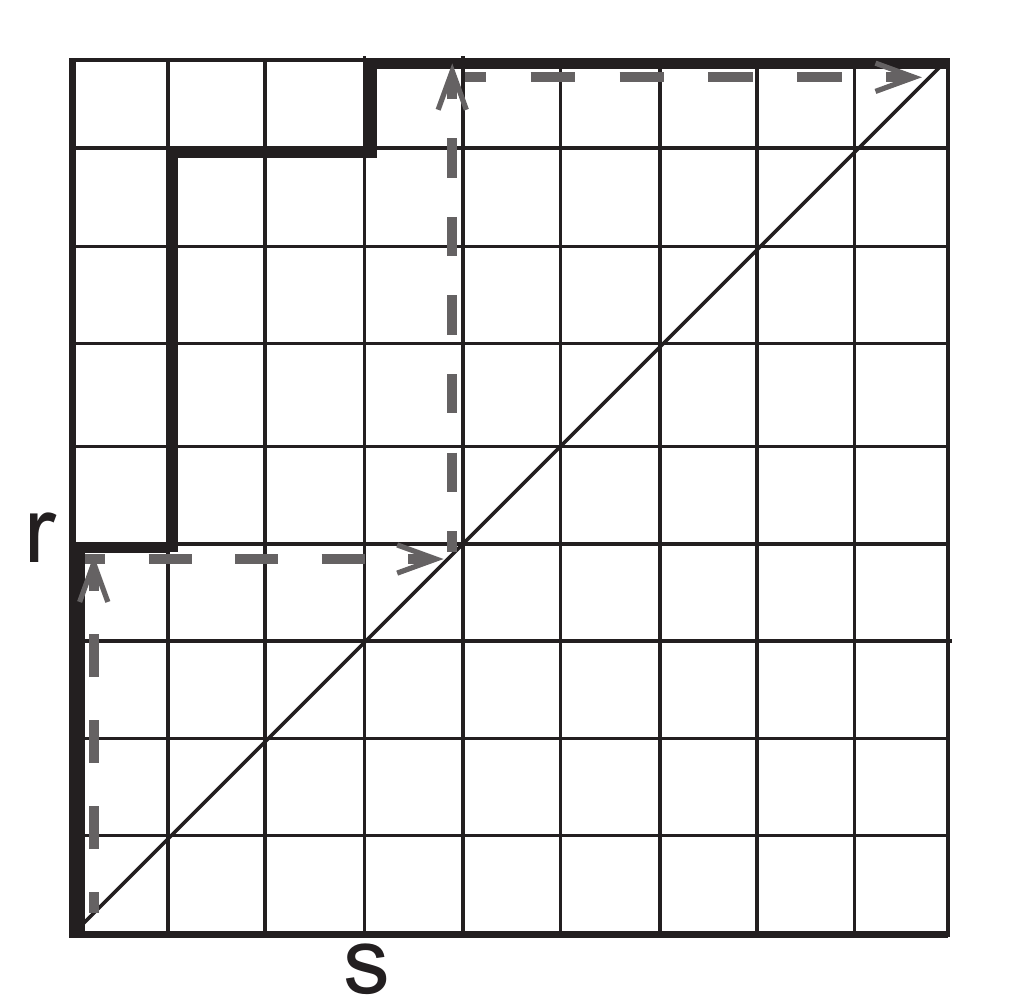} }
\caption{A Catalan path and the bounce path of $P(r, n-1,\dots, n-1, n, \dots, n)$}\label{Fig6}
\end{figure}

\begin{thm} \label{thm:form2}
Let $P=P(m_1,m_2 \dots, m_{n-1})$ be a natural unit interval order and $G$ be the incomparability graph of $P$. If $m_1=r$, $m_2=\cdots=m_s=n-1$, and $m_{s+1}=n$ for $2 \leq s\leq r \leq n-2$, then 
$$X_{G}(\bold{x},t)=[n-4]_t!\{\widebar{C}_{n}(t) e_{n}+\widebar{C}_{(n-1,1)}(t) e_{(n-1,1)}+\widebar{C}_{(n-2,2)}(t) e_{(n-2,2)}\},$$
where
\begin{eqnarray*}
\widebar{C}_n(t)&=&[n]_t [n-3]_t [r-1]_t [n-s-1]_t\\
\widebar{C}_{(n-2,2)}(t)&=&t^{n+r-s-3} [2]_t [n-r-1]_t [s-1]_t\\
\widebar{C}_{(n-1,1)}(t)&=& \frac{1}{2} [n-2]_t \{t^{r-1}(1+t^{n-r-s})[n-3]_t+t^{n-s-1} [2]_t [r-2]_t [s-1]_t\\
&~&\qquad\qquad\qquad\qquad\qquad\qquad\qquad\qquad +t^{r-1} [2]_t [n-r-1]_t [n-s-2]_t \} .
\end{eqnarray*}
Moreover, $X_{G}(\bold{x},t)$ is $e$-positive and $e$-unimodal with center of the symmetry $\frac{\binom{n}{2}-(n-r+s-1)}{2}$.
\end{thm}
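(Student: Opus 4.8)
The plan is to mirror the strategy of Theorem~\ref{thm:form1}: translate the combinatorial model of Theorem~\ref{thm:main} (and its analogue from Theorem~\ref{thm:second}) into a sum over $P$-tableaux of each of the three shapes $1^n$, $2^11^{n-2}$, $2^21^{n-4}$, and evaluate each Schur-coefficient $B_\lambda(t)$ explicitly using Lemma~\ref{lem:Stan}. First I would record, as in the discussion preceding Theorem~\ref{thm:second}, that for this $P$ there is no $P$-tableau of shape $3^11^{n-3}$: indeed $1<_P a$ only for $a>r\geq s$ while $b<_P n$ only for $b\leq s$, so no chain $1<_P a_{1,2}<_P n$ exists. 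Hence $B_{3^11^{n-3}}(t)=0$, and the formulas in Theorem~\ref{thm:second} collapse to $C_n=B_{1^n}-B_{2^11^{n-2}}$, $C_{(n-1,1)}=B_{2^11^{n-2}}-B_{2^21^{n-4}}$, $C_{(n-2,2)}=B_{2^21^{n-4}}$, explaining why only three $e_\lambda$'s appear and why $[n-4]_t!$ factors out (the "bulk" of each tableau is a linear chain on $\{2,\dots,n-1\}\setminus\{\text{corner entries}\}$, contributing a factor counted by Lemma~\ref{lem:Stan}).

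Next I would compute $B_{2^21^{n-4}}(t)$ directly. A shape-$2^21^{n-4}$ $P$-tableau must have $1$ as the top-left entry and $n$ as a second-column entry, and since $1$ and $n$ cannot share a row, $n$ sits in position $(2,2)$; the entry $a_{1,2}$ lies in $\{r+1,\dots,n-1\}\cap\{2,\dots,s\}$ — more precisely $a_{1,2}$ must satisfy $1<_P a_{1,2}$ and $a_{2,1}<_P n$ with $a_{2,1}$ in the second position, so one analyses which pairs $(a_{1,2},a_{2,1})$ with $a_{1,2}>r$, $a_{2,1}\le s$ can occupy the $2\times2$ corner, the rest of $\{2,\dots,n-1\}$ forming the tail. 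Counting inversions splits into the corner contribution (a power of $t$, uniform in the chosen corner once the sets are fixed) times a multiset-permutation count for the tail, and Lemma~\ref{lem:Stan} turns the sum over admissible corners into the product $t^{n+r-s-3}[2]_t[n-r-1]_t[s-1]_t$ after pulling out $[n-4]_t!$. A parallel but longer bookkeeping gives $B_{1^n}(t)=[n]_t\prod_{i=2}^n[b_i]_t$ from Theorem~\ref{thm:en} (here $b_i$ is read off the Catalan path), and $B_{2^11^{n-2}}(t)$ by the same corner-plus-tail decomposition with a single corner cell; subtracting yields $\widebar{C}_n$ and $\widebar{C}_{(n-1,1)}$. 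The stated symmetric, slightly asymmetric-looking expression for $\widebar{C}_{(n-1,1)}$ (with its factor $\tfrac12$ and three-term bracket) should emerge as $B_{2^11^{n-2}}-B_{2^21^{n-4}}$ after combining the injection-image counts; I would verify the $\tfrac12$ is spurious, i.e. the bracket is divisible by $2$ as a polynomial, by checking it at $t=1$ against the known value $X_G(\mathbf{x})$ at $e$-coefficients, or by exhibiting the bracket as a sum of an even number of equal monomials.

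Finally, $e$-positivity is inherited from Theorem~\ref{thm:main} (or re-read off the closed forms, which are manifestly products of $[m]_t$'s and monomials), and $e$-unimodality follows from Lemma~\ref{lem:basic}: each of $[n]_t$, $[n-3]_t$, $[r-1]_t$, $[n-s-1]_t$, $[2]_t$, $[n-r-1]_t$, $[s-1]_t$, $[n-2]_t$, and $[n-4]_t!$ is palindromic and unimodal, and $\widebar{C}_{(n-1,1)}(t)$ is palindromic and unimodal once one checks each of its three summands has the \emph{same} center of symmetry (this is the reason the three-term combination is again unimodal — Lemma~\ref{lem:basic} only handles products, so for the sum I need equal centers, then invoke that a sum of palindromic unimodal polynomials with a common center is palindromic unimodal). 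Tracking the center of symmetry through the products — each $[m]_t$ contributes $(m-1)/2$ and the monomial prefactors contribute their exponent — gives, after the arithmetic, the common value $\bigl(\binom{n}{2}-(n-r+s-1)\bigr)/2 = |E(G)|/2$, consistent with Theorem~\ref{thm:SW}. The main obstacle will be the $\widebar{C}_{(n-1,1)}$ computation: correctly enumerating the shape-$2^11^{n-2}$ $P$-tableaux by the position of the "detached" second-column entry and by whether that entry is $n$ or not, keeping the inversion statistic exact, and then seeing that the resulting polynomial packages into the displayed three-term form with the $[n-4]_t!$ and $\tfrac12$ factored out and with all three terms sharing one center of symmetry; everything else is a routine application of Lemma~\ref{lem:Stan} and Lemma~\ref{lem:basic}.
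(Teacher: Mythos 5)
Your route is genuinely different from the paper's: the paper does not compute the Schur coefficients $B_\la(t)$ at all, but instead partitions $\widebar{\mathcal{T'}}_{P,2^11^{n-2}}\cup\widebar{\mathcal{T'}}_{P,2^21^{n-4}}$ according to the first-row pair $(i,j)$, which must be $(1,n)$, $(i,n)$ with $2\le i\le s$, or $(1,j)$ with $r+1\le j\le n-1$, and observes that each block contributes $t^{b_i+b_j}$ times the full $e$-expansion of $X_{G-\{i,j\}}(\bold{x},t)$, where $G-\{i,j\}$ is again the incomparability graph of a smaller natural unit interval order evaluated by Corollary~\ref{cor:oneline} (after a reflection via Lemma~\ref{lem:reflection} in the third case). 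Your direct corner-plus-tail count has several concrete problems. For shape $2^11^{n-2}$ the tail contains $1$ or $n$, so the residual poset is \emph{not} an antichain and the single-column tableaux are constrained by the condition $a_{i+1,1}\not<_P a_{i,1}$; Lemma~\ref{lem:Stan} alone does not evaluate that sum -- you need the $e$-expansion of the residual order, i.e.\ exactly Corollary~\ref{cor:oneline}. For shape $2^21^{n-4}$ your claim that $1$ must sit at $(1,1)$ and $n$ at $(2,2)$ is false: the row $(b,n)$ with $b\le s$ may also be the top row (one checks $1\not<_P b$ and $a\not<_P n$ for $a>r$), and it is precisely these two orderings that produce the factor $[2]_t$ in $\widebar{C}_{(n-2,2)}$. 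Also, Theorem~\ref{thm:en} gives $C_n(t)$, the coefficient of $e_n$, not $B_{1^n}(t)$; the two differ by $B_{2^11^{n-2}}(t)$.

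The decisive gap is in the unimodality of $\widebar{C}_{(n-1,1)}(t)$. Any correct computation first produces $C_{(n-1,1)}(t)=[n-4]_t![n-2]_t\,\bigl(t^{n-s-1}[n-3]_t+t^{n-s}[r-2]_t[s-1]_t+t^{r-1}[n-r-1]_t[n-s-2]_t\bigr)$, and the three bracketed summands have centers of symmetry $\tfrac{3n-2s-6}{2}$, $\tfrac{2n+r-s-5}{2}$, and $\tfrac{2n+r-s-7}{2}$, which are pairwise distinct in general; so your plan of ``sum of palindromic unimodal polynomials with a common center'' does not apply to the expression you would actually obtain. The missing idea is to invoke Lemma~\ref{lem:reflection}: since $X_G=X_{\widetilde G}$ for the reflected order, the bracket $A_n(r,s;t)$ satisfies $A_n(r,s;t)=A_n(n-s,n-r;t)$, and averaging these two equal expressions yields the displayed three-term form, each of whose summands \emph{is} palindromic with the common center $\tfrac{2n+r-s-6}{2}$. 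The factor $\tfrac12$ is therefore not spurious bookkeeping to be explained away at $t=1$; it records this symmetrization, which is the whole point of the unimodality argument. Without this step (or an equivalent proof of the identity $A_n(r,s;t)=A_n(n-s,n-r;t)$), your proposal does not establish $e$-unimodality.
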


\begin{proof}
By Theorem \ref{thm:main}, the coefficient $C_{(n-\ell,\ell)}(t)$ of $e$-basis expansion of $X_{G}(\bold{x},t)$ is equal to the sum of $t^{{\rm inv}_{G}(T)}$'s for all $T\in\widebar{\mathcal{T'}}_{P,2^{\ell}1^{n-2\ell}}$. Since $i=1$ or $j=n$ for all $i<_Pj$, $C_{(n-\ell,\ell)}(t)=0$ when $\ell>2$. By Theorem \ref{thm:en}, we have 
$$C_{n}(t)=[n]_t\prod_{i=2}^{n}[b_i]_t=[n]_t [n-3]_t! [r-1]_t [n-s-1]_t=[n-4]_t! \widebar{C}_n(t).$$ 
We already know from Theorem \ref{thm:en} that $C_{n}(t)$ is $e$-unimodal with center of symmetry $\frac{|E(G)|}{2}=\frac{\binom{n}{2}-(n-r+s-1)}{2}$.

Now we consider the sum of $t^{{\rm inv}_{G}(T)}$'s for all $T\in\widebar{\mathcal{T'}}_{P,2^{1}1^{n-2}}\cup \widebar{\mathcal{T'}}_{P,2^{2}1^{n-4}}$. Let $\widebar{\mathcal{T'}}_{P,(i,j)}$ be the set of all $P$-tableaux of $\widebar{\mathcal{T'}}_{P,2^{1}1^{n-2}}\cup \widebar{\mathcal{T'}}_{P,2^{2}1^{n-4}}$ such that the first row is occupied by $i$ and $j$. If $G-\{i,j\}$ is the induced subgraph of $G$ with vertex set $V(G)-\{i,j\}$, and 
$$X_{G-\{i,j\}}(\bold{x},t)=D_{n-2}(t)e_{n-2}+D_{(n-3,1)}(t)e_{(n-3,1)}, $$
then  
$$\sum_{T\in \widebar{\mathcal{T'}}_{P,(i,j)}} t^{{\rm inv}_{G}(T)}=t^{b_i+b_j}\{D_{n-2}(t)e_{(n-1,1)}+D_{(n-3,1)}(t)e_{(n-2,2)}\},$$
where $b_k=|\{ \{j,k\}\in E(G)~|~j<k\}|$.
We have three cases: 
\begin{enumerate}
\item[1.] Let $(i,j)=(1,n)$. Then $b_i+b_j=0+(n-s-1)$ and the subgraph $G-\{i,j\}$ is the complete graph with $n-2$ vertices. Since, $X_{G-\{i,j\}}(\bold{x},t)=[n-2]_t!e_{n-2}$,
$$\sum_{T\in \widebar{\mathcal{T'}}_{P,(1,n)}} t^{{\rm inv}_{G}(T)}=t^{n-s-1}[n-2]_t!e_{(n-1,1)}.$$
\item[2.] Let $(i,j)=(i,n)$ for $2\leq i \leq s$. Then $b_i+b_j=(i-1)+(n-s-1)$ and the subgraph $G-\{i,j\}$ is equal to the incomparability graph of a natural unit interval order $P'=(m_1', m_2', \dots, m_{n-3}')$ with $m_1'=r-1$ and $m_2'=n-2$. By Corollary \ref{cor:oneline}, 
$$X_{G-\{i,j\}}(\bold{x},t)=[n-4]_t!([n-2]_t[r-2]_t e_{n-2}+t^{r-2}[n-r-1]_t e_{(n-3,1)}).$$
It gives us \\
$\displaystyle{\sum_{i=2}^{s} \sum_{T\in \widebar{\mathcal{T'}}_{P,(i,n)}} t^{{\rm inv}_{G}(T)}=t^{n-s}[s-1]_t [n-4]_t!([n-2]_t[r-2]_t e_{(n-1,1)}}$
$$\qquad\qquad\qquad\qquad\qquad\qquad\qquad\qquad\qquad\qquad\qquad\qquad\qquad+t^{r-2}[n-r-1]_t e_{(n-2,2)}).$$
\item[3.] Let $(i,j)=(1,j)$ for $r+1\leq j \leq n-1$. Then $b_i+b_j=0+(j-2)$ and the subgraph $G-\{i,j\}$ is equal to the incomparability graph of a natural unit interval order $P'=(m_1', m_2', \dots, m_{n-3}')$ with $m_1'=\cdots=m_{s-1}'=n-1$ and $m_s'=n-2$. By lemma \ref{lem:reflection}, $X_{{\rm inc}(P')}(\bold{x},t)=X_{{\rm inc}(\widetilde{P'})}(\bold{x},t)$ where $\widetilde{P'}=(\widetilde{m}_1', \widetilde{m}_2', \dots, \widetilde{m}_{n-3}')$ with $\widetilde{m}_1'=n-s-1$ and $\widetilde{m}_2'=n-2$. By Corollary \ref{cor:oneline}, 
$$X_{G-\{i,j\}}(\bold{x},t)=[n-4]_t!([n-2]_t[n-s-2]_t e_{n-2}+t^{n-s-2}[s-1]_t e_{(n-3,1)}).$$
Therefore,\\
$\displaystyle{\sum_{j=r+1}^{n-1} \sum_{T\in \widebar{\mathcal{T'}}_{P,(1,j)}} t^{{\rm inv}_{G}(T)}=t^{r-1}[n-r-1]_t [n-4]_t!([n-2]_t[n-s-2]_t e_{(n-1,1)}}$
$$\quad\qquad\qquad\qquad\qquad\qquad\qquad\qquad\qquad\qquad\qquad\qquad\qquad\qquad+t^{n-s-2}[s-1]_t e_{(n-2,2)}).$$
\end{enumerate}
Combining all of these, we have 
\begin{eqnarray*}
C_{(n-2,2)}(t)&=&[n-4]_t!(t^{n+r-s-2}[s-1]_t[n-r-1]_t+t^{n+r-s-3}[n-r-1]_t[s-1]_t)\\
&=&[n-4]_t!t^{n+r-s-3}[2]_t[n-r-1]_t [s-1]_t\\
&=&[n-4]_t!\widebar{C}_{(n-2,2)}(t),
\end{eqnarray*}
and
$$C_{(n-1,1)}(t)=[n-4]_t![n-2]_t(t^{n-s-1}[n-3]_t+t^{n-s}[r-2]_t[s-1]_t+t^{r-1}[n-r-1]_t[n-s-2]_t).$$
Using Lemma \ref{rem:uni}, we know that $C_{(n-2,2)}(t)$ is $e$-unimodal with center of symmetry $\frac{|E(G)|}{2}$.
 
Finally, to show that $C_{(n-1,1)}(t)$ is $e$-unimodal with center of symmetry $\frac{|E(G)|}{2}$, let 
$$C_{(n-1,1)}=[n-4]_t![n-2]_t A_n(r,s;t).$$
Let $\widetilde{P}=(\widetilde{m}_1, \widetilde{m}_2, \dots, \widetilde{m}_{n-1})$ be a natural unit interval order with $\widetilde{m}_1=n-s$, $\widetilde{m}_2=\cdots=\widetilde{m}_{n-r}=n-1$, and $\widetilde{m}_{n-r+1}=n$. For the incomparability graph $\widetilde{G}={\rm inc}(\widetilde{P})$,
by lemma \ref{lem:reflection}, $X_{G}(\bold{x},t)=X_{\widetilde{G}}(\bold{x},t)$.  Therefore,
\begin{eqnarray*}
A_{n}(r,s;t)&=&A_{n}(n-s,n-r;t)\\
&=&\frac{1}{2}\{A_{n}(r,s;t)+A_{n}(n-s,n-r;t)\}\\
&=&\frac{1}{2}\{(t^{n-s-1}[n-3]_t+t^{n-s}[r-2]_t[s-1]_t+t^{r-1}[n-r-1]_t[n-s-2]_t)\\
&~&\quad\qquad+(t^{r-1}[n-3]_t+t^{r}[n-s-2]_t[n-r-1]_t+t^{n-s-1}[s-1]_t[r-2]_t)\}\\
&=&\frac{1}{2}\{t^{r-1}(1+t^{n-r-s})[n-3]_t + t^{n-s-1}[2]_t[r-2]_t[s-1]_t\\ 
&~&\qquad\qquad\qquad\qquad\qquad\qquad\qquad\qquad\qquad+ t^{r-1}[2]_t [n-r-1]_t [n-s-2]_t\}.
\end{eqnarray*}
Since $t^{r-1}(1+t^{n-r-s})[n-3]_t$, $t^{n-s-1}[2]_t[r-2]_t[s-1]_t$, and $ t^{r-1}[2]_t [n-r-1]_t [n-s-2]_t$ are $e$-unimodal polynomials with center of symmetry $\frac{2n+r-s-6}{2}$, $A_{n}(r,s;t)$ is also $e$-unimodal with center of symmetry $\frac{2n+r-s-6}{2}$. By lemma \ref{lem:basic},
$$C_{(n-1,1)}(t)=[n-4]_t![n-2]_t A_n(r,s;t)=[n-4]_t!\widebar{C}_{(n-1,1)}(t)$$
is $e$-unimodal with center of symmetry $\frac{\binom{n-4}{2}+(n-3)+(2n+r-s-6)}{2}=\frac{|E(G)|}{2}$.
\end{proof}

\vspace{3mm}

\section*{Acknowledgements}\label{sec:acknow} The authors would like to thank Stephanie van Willigenburg for helpful conversations.

\vspace{3mm}
\bibliographystyle{unsrt}  
\bibliography{mybib} 
\end{document}